\theoremstyle{plain}
\newtheorem{lemma}{Lemma}[section]
\newtheorem{theorem}[lemma]{Theorem}
\newtheorem{proposition}[lemma]{Proposition}
\newtheorem{corollary}[lemma]{Corollary}
\theoremstyle{definition}
\newtheorem{definition}[lemma]{Definition}
\newtheorem{remark}[lemma]{Remark}
\numberwithin{equation}{section}
\newcommand{\dom}{\textrm{Dom\,}}
\newcommand{\R}{\mathbb{R}}
\newcommand{\N}{\mathbb{N}}
\newcommand{\Q}{\mathbb{Q}}
\newcommand{\supp}{\text{\rm supp}}
\newcommand{\gr}{\textrm{graph}}
\newcommand{\diam}{\rm{diam\,}}
\newcommand{\haus}{\mathcal{H}}
\newcommand{\ve}{\varepsilon}
\newcommand{\erre}{\mathbb{R}}
\newcommand{\cI}{\mathcal{I}}
\newcommand{\f}{\varphi}
\newcommand{\T}{\mathcal{T}}
\renewcommand{\r}{\varrho}
\renewcommand{\L}{\mathcal{L}}
\newcommand{\RCD}{\mathsf{RCD}}
\newcommand{\CD}{\mathsf{CD}}
\newcommand{\Geo}{{\rm Geo}}
\newcommand{\mm}{\mathfrak m}
\newcommand{\qq}{\mathfrak q}
\newcommand{\QQ}{\mathfrak Q}
\newcommand{\sfd}{\mathsf d}
\newcommand{\Opt}{\mathrm{OptGeo}}
\newcommand{\bigslant}[2]{{\raisebox{.2em}{$#1$}\left/\raisebox{-.2em}{$#2$}\right.}}
\begin{document}

\title[Sharp isoperimetric inequalities in metric-measure spaces with  lower Ricci bounds] {Sharp and rigid isoperimetric inequalities in metric-measure spaces with  lower Ricci curvature bounds}
\author{Fabio Cavalletti}\thanks{F. Cavalletti: Universit\`a degli Studi di Pavia, Dipartimento di Matematica, email: fabio.cavalletti@unipv.it} 
\author{ Andrea Mondino} \thanks{A. Mondino: ETH-Z\"urich and  Universit\"at Z\"urich, Institut f\"ur Mathematik.  email: andrea.mondino@math.uzh.ch} 
%

%

\bibliographystyle{plain}

\begin{abstract}
We prove that if $(X,\sfd,\mm)$ is a metric measure space with $\mm(X)=1$ having (in a synthetic sense) Ricci curvature bounded from below by $K>0$ and dimension bounded above by $N\in [1,\infty)$, then the classic L\'evy-Gromov isoperimetric inequality (together with the recent sharpening counterparts proved in the smooth setting  by E. Milman for any $K\in \R$, $N\geq 1$ and upper diameter bounds) holds, i.e. the isoperimetric profile function of $(X,\sfd,\mm)$ is bounded from below by the isoperimetric profile of the model space. Moreover, if equality is attained for some volume $v \in (0,1)$ and $K$ is strictly positive, then the space must be a spherical suspension and in this case we completely classify the isoperimetric regions.  Finally we also establish the almost rigidity: if the  equality is almost attained for some volume $v \in (0,1)$ and $K$ is strictly positive, then the space  must be mGH close to  a  spherical suspension.
To our knowledge this is the first result about isoperimetric comparison for non smooth metric measure spaces satisfying Ricci curvature lower bounds.  Examples of spaces fitting our assumptions include measured Gromov-Hausdorff limits of Riemannian manifolds satisfying Ricci curvature lower bounds, Alexandrov spaces with curvature bounded from below, Finsler manifolds endowed with a strongly convex norm and satisfying  Ricci curvature lower bounds; the result seems new even in these celebrated classes of spaces.   
\end{abstract}

\maketitle


\section{Introduction}

\subsection{Isoperimetry}

The isoperimetric problem, having its roots in myths of more than 2000 years ago,  is one of the most classical and beautiful problems in mathematics. It amounts to answer the following natural questions:
\begin{enumerate}
\item Given a space $X$ what is the  minimal amount of  area  needed to enclose a fixed volume $v$?
\item   Does an optimal shape exist?
\item  In the affirmative case, can we describe the optimal shape?
\end{enumerate}

There are not many examples of spaces where the answer to all the three questions above is  known. If the space $X$ is the euclidean $N$-dimensional space $\R^N$ then it is well known that the only optimal shapes, called from now on isoperimetric regions, are  the round balls; if $X$ is the  round $N$-dimensional sphere ${\mathbb S}^N$ then the only isoperimetric regions are metric balls, etc.  To the  best of our knowledge, the spaces for which one can fully answer all the three questions above  either  have a \emph{very strong symmetry} or they are perturbations of spaces with a very  strong symmetry.  For an updated list of geometries admitting an isoperimetric description we refer to \cite[Appendix H]{EiMe}. Let us also mention that the isoperimetric problem has already been studied in presence of (mild) singularities of the space: mostly for conical manifolds \cite{MilRot,  MR} and polytopes \cite{MorPol}. The isoperimetric problem has been analyzed from several complementary points of view: for an overview of the more geometric aspects we refer to \cite{Oss, Rit, Ros}, for the approach via  geometric measure theory see for instance \cite{Mag, Mor}, for  the connections with convex and integral geometry see  \cite{BurZal}, for the point of view of optimal transport see \cite{FiMP, Vil}, for the recent quantitative forms see \cite{CL, FuMP}.

Besides the euclidean one, the most famous isoperimetric inequality is probably the L\'evy-Gromov  inequality \cite[Appendix C]{Gro}, which states that if $E$ is a (sufficiently regular) subset of a Riemannian manifold $(M^N,g)$ with dimension $N$ and Ricci bounded below  by $K>0$, then 
\begin{equation}\label{eq:LevyGromov}
\frac{|\partial E|}{|M|}\geq \frac{|\partial B|}{|S|},
\end{equation}
where $B$ is a spherical cap in the model sphere, i.e. the $N$-dimensional sphere with constant Ricci curvature equal to $K$,  and $|M|,|S|,|\partial E|, |\partial B|$  denote the appropriate $N$ or $N-1$ dimensional volume, and where $B$ is chosen so that
$|E|/|M|=|B|/|S|$. In other words, the L\'evy-Gromov isoperimetric inequality states that  isoperimetry in $(M,g)$ is at least as strong as in the model space $S$.
\\

Let us observe next that the isoperimetric problem makes sense in the larger class of metric measure spaces.  A metric measure space $(X,\sfd,\mm)$, m.m.s. for short,  is a metric space\footnote{during all the paper we will assume $(X,\sfd)$ to be complete, separable and proper} $(X,\sfd)$ endowed with a Borel probability measure $\mm$.  In the standard situation where  the metric space is a compact  Riemannian manifold, $\mm$ is nothing but the normalized volume measure. Notice that in the L\'evy-Gromov inequality \eqref{eq:LevyGromov} one considers exactly  this normalized  volume measure.
\\Regarding the m.m.s. setting, it is clear that the volume of a Borel set is replaced by its  $\mm$-measure, $\mm(E)$; the boundary area of the smooth framework instead can be replaced by the Minkowski content
\begin{equation}\label{def:MinkCont}
\mm^+(E):=\liminf_{\ve\downarrow 0} \frac{\mm(E^\ve)- \mm(E)}{\ve},
\end{equation}
where $E^{\ve}:=\{x \in X \,:\, \exists y \in  E \, \text{ such that } \, \sfd(x,y)< \ve \}$ is the $\ve$-neighborhood of $E$ with respect to the metric $\sfd$.
So the isoperimetric problem for a m.m.s. $(X,\sfd,\mm)$  amounts to finding the largest function $\cI_{(X,\sfd,\mm)}:[0,1]\to \R^+$ such that for every Borel subset $E\subset X$ it holds $\mm^+(E)\geq \cI_{(X,\sfd,\mm)}(\mm(E))$.
\\

The main goal of this paper is to prove that the L\'evy-Gromov isoperimetric inequality holds in the general framework of metric measure spaces. For the problem to make sense,  we also need  a notion  of ``Ricci curvature bounded below by $K$ and dimension bounded above by $N$''  for m.m.s..

\subsection{Ricci curvature lower bounds for metric measure spaces}
  The investigation about the topic began with the seminal papers of Lott-Villani \cite{lottvillani:metric}  and Sturm \cite{sturm:I, sturm:II}, though has been adapted considerably since the work of Bacher-Sturm \cite{BS10} and Ambrosio-Gigli-Savar\'e \cite{AGS11a, AGS11b}.  The crucial property of any such definition is the compatibility with the smooth Riemannian case and the stability with respect to  measured Gromov-Hausdorff convergence.
While a great deal of progress has been made in this latter general framework from both the analytic, geometric and structural points of view, see for instance  \cite{AGMR12, AGS, AGS11a, AGS11b,  AMS, AMSLocToGlob, BS10, cava:MongeRCD, cava:decomposition, cavasturm:MCP, EKS, GaMo,GigliSplitting, GMR2013, GMS2013,Ket, MN,R2011,Savare13,Vil}, the isoperimetric problem  has remained elusive.

The notion of  lower Ricci curvature bound on a general metric-measure space comes with two subtleties. The first is that of \emph{dimension}, and has been well understood since the work of Bakry-\'Emery  \cite{BakryEmery_diffusions} and Bakry-Ledoux \cite{BakryLedoux}:  in both the geometry and analysis of spaces with lower Ricci curvature bounds, it has become clear the correct statement is not that ``$X$ has Ricci curvature bounded from below by $K$'', but that ``$X$ has $N$-dimensional Ricci curvature bounded from below by $K$''. Such spaces are said to satisfy the $(K,N)$-\emph{Curvature Dimension} condition, $\CD(K,N)$ for short; a variant of this is that of \emph{reduced} curvature dimension bound, $\CD^*(K,N)$.  See \cite{BS10, BakryEmery_diffusions, BakryLedoux, sturm:II} and Section \ref{Ss:geom} for more on this.

The second subtle point is that the classical definition of a metric-measure space with lower Ricci curvature bounds allows for Finsler structures (see the last theorem in \cite{Vil}), which after the aforementioned works of Cheeger-Colding are known not to appear as limits of smooth manifolds with Ricci curvature lower bounds.  To address this issue, Ambrosio-Gigli-Savar\'e \cite{AGS11b} introduced a more restrictive condition which rules out Finsler geometries while retaining the stability properties under measured Gromov-Hausdorff convergence, see also \cite{AGMR12} for the present simplified axiomatization.  In short, one studies the Sobolev space $W^{1,2}(X)$ of functions on $X$.  This space is always a Banach space, and the imposed extra condition  is that $W^{1,2}(X)$ is a Hilbert space.  Equivalently, the Laplace operator on $X$ is linear.  The notion of a lower Ricci curvature bound compatible with this last Hilbertian condition is called  \emph{Riemannian Curvature Dimension} bound, $\RCD$ for short.  Refinements of this have led to the notion of $\RCD^*(K,N)$-spaces, which is the key object of study in this paper.  

\subsection{Main results}
Our main result is that the L\'evy-Gromov isoperimetric inequality holds for  m.m.s. satisfying $N$-Ricci curvature lower bounds:

\begin{theorem}[L\'evy-Gromov in $\RCD^*(K,N)$-spaces] \label{thm:LG}
Let  $(X,\sfd,\mm)$ be an $\RCD^*(K,N)$ space for some $N\in \N$ and $K>0$. Then for every Borel subset $E\subset X$ it holds
$$\mm^+(E)\geq  \frac{|\partial B|}{|S|},$$
where $B$ is a spherical cap in the model sphere (the $N$-dimensional sphere with constant Ricci curvature equal to $K$) chosen so that
$|B|/|S|=\mm(E)$.
\end{theorem}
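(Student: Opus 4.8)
The plan is to prove Theorem~\ref{thm:LG} by reducing the multi-dimensional isoperimetric problem in $(X,\sfd,\mm)$ to a family of one-dimensional isoperimetric problems on weighted intervals, via the \emph{localization} (or \emph{needle decomposition}) technique. Concretely, given a Borel set $E\subset X$ with $\mm(E)=v$, I would fix the signed distance function $u$ associated to a suitable $1$-Lipschitz function $f$ built from $E$ (for instance $f(x)=\sfd(x,E)-\sfd(x,X\setminus E^c)$ after a normalization that makes $\int_X f\,\dmm=0$, or more robustly $f$ a Kantorovich potential for the transport between $\mm|_E$ and $\mm|_{X\setminus E}$ rescaled to have zero mean). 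Using the $\RCD^*(K,N)$ (hence in particular essentially non-branching $\CD(K,N)$) structure, I would invoke the transport-ray decomposition: the set $\mathcal{T}$ of transport rays partitions (up to an $\mm$-null set) into geodesics $\{X_\alpha\}_{\alpha\in Q}$, and $\mm$ disintegrates as $\mm=\int_Q \mm_\alpha\,\qq(d\alpha)$ where each $\mm_\alpha$ is supported on the closure of $X_\alpha$ and, crucially, $(X_\alpha,\sfd,\mm_\alpha)$ verifies a one-dimensional $\CD(K,N)$ condition --- i.e. $\mm_\alpha$ has a density $h_\alpha$ on the ray which is a $\CD(K,N)$ density, so that on each needle the isoperimetric comparison reduces to the model sphere via the classical one-dimensional Levy--Gromov/Bérard--Besson--Gallot computation.

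The key steps, in order, would be: (1) set up the needle decomposition machinery for $\RCD^*(K,N)$ spaces, checking that essential non-branching holds and that the disintegration theorem of Cavalletti--Mondino (building on Klartag's Riemannian version and Cavalletti--Sturm) applies, yielding the one-dimensional $\CD(K,N)$ densities $h_\alpha$; (2) arrange the potential $f$ so that the conditional measures $\mm_\alpha$ each assign mass exactly $v$ to $E\cap X_\alpha$ --- this is the delicate balancing/bisection point: one wants $\mm_\alpha(E\cap X_\alpha)=v\cdot\mm_\alpha(X_\alpha)$ for $\qq$-a.e.\ $\alpha$, which is achieved by choosing $f$ to be (a suitable rescaling of) the Kantorovich potential for an optimal transport problem designed so that the ``midpoint sets'' match the prescribed volume fraction; (3) on each ray, apply the one-dimensional isoperimetric inequality for $\CD(K,N)$ densities (this is essentially Bérard--Besson--Gallot / E.~Milman in dimension one, and says that among subsets of prescribed measure ratio $v$ in a weighted interval with a $\CD(K,N)$ density, the perimeter-to-measure ratio is minimized by the model, giving exactly $|\partial B|/|S|$); (4) integrate the one-dimensional bounds back up using the co-area-type inequality $\mm^+(E)\ge \int_Q \mm_\alpha^+(E\cap X_\alpha)\,\qq(d\alpha)$ (or rather a lower bound for the Minkowski content in terms of the conditional perimeters), and conclude $\mm^+(E)\ge |\partial B|/|S|\cdot\mm(X)=|\partial B|/|S|$ since $\mm(X)=1$.

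I expect the main obstacle to be Step~(2) together with the measurability/regularity bookkeeping in Step~(4). For Step~(2): the signed distance function from $\partial E$ does not in general produce conditional measures with the correct mass split, so one must instead take $f$ to be a normalized Kantorovich potential and verify that, along $\qq$-a.e.\ transport ray, the set $E$ intersects the ray in a sub-interval (or at least that its conditional measure and conditional Minkowski content behave well) --- this requires the fine structure of transport rays (monotonicity, the fact that $E$ is ``forward'' along rays) and careful use of the first-variation/Kantorovich duality to pin the fraction. For Step~(4): passing from the global Minkowski content $\mm^+(E)$ to an integral of one-dimensional perimeters is not a literal co-area formula in a non-smooth space; one needs a Fubini-type lower bound exploiting that the neighborhood $E^\ve$ localizes along rays, handling the ``transversal'' part of the needle decomposition (the set where $f$ is constant, i.e.\ $|\nabla f|=0$) which has the right codimension, and controlling the contribution of the non-degenerate part. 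Once these two technical pillars are in place, the one-dimensional comparison and the final integration are comparatively routine, and the normalization $\mm(X)=1$ gives precisely the Levy--Gromov constant $|\partial B|/|S|$ on the nose.
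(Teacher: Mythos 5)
Your plan is essentially the paper's own proof: take the zero-mean function $\chi_E-v$, use the $L^1$-Kantorovich potential and the resulting transport-ray disintegration with $\CD(K,N)$ densities (Theorems \ref{T:CDKN-1} and \ref{T:localize}), observe that the mass balance $\int_{X_q}(\chi_E-v)\,\mm_q=0$ along $\qq$-a.e.\ ray comes for free from the ray structure of the $L^1$-transport between $\mm\llcorner_E/v$ and $\mm\llcorner_{X\setminus E}/(1-v)$, and then pass from $\mm^+(E)$ to the one-dimensional Minkowski contents via the neighborhood inclusion along rays and Fatou, concluding with the one-dimensional model comparison (Theorem \ref{T:iso}). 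The only ingredients you do not mention are minor relative to the scheme: the needle densities satisfy only the synthetic convexity inequality \eqref{E:curvdensmm} rather than being smooth, so Milman's one-dimensional result is applied after the mollification argument of Lemma \ref{lem:approxh} and Theorem \ref{thm:I=Is}, and the ``transversal'' set you worry about is simply $\mm$-null here because $\chi_E-v$ never vanishes for $v\in(0,1)$.
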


Actually  Theorem \ref{thm:LG} will be just a particular case of the  more general Theorem \ref{thm:mainIsoComp} including any lower bound $K \in \R$ on the Ricci curvature and any upper bound $N \in [1,\infty)$ on the dimension. In order to state the result we need some model space to compare with: the same role that the round sphere played for the L\'evy-Gromov inequality. The model spaces for general $K,N$ have been discovered by E. Milman \cite{Mil} who extended the L\'evy-Gromov isoperimetric inequality to smooth manifolds with densities, i.e. smooth Riemannian manifold whose volume measure has been multiplied by a smooth non negative integrable density function. 
Milman detected a model isoperimetric profile  $\cI_{K,N,D}$ such that if a Riemannian manifold with density has diameter at most $D>0$, generalized Ricci curvature at least $K\in \R$ and generalized dimension at most $N\geq 1$ then the isoperimetric profile function of the weighted manifold is bounded below by   $\cI_{K,N,D}$. The main result of this paper is the non-smooth generalization of this statement:

\begin{theorem}[L\'evy-Gromov-Milman in $\RCD^*(K,N)$-spaces] \label{thm:mainIsoComp}
Let $(X,\sfd,\mm)$ be a metric measure space   with  $\mm(X)=1$ and  having diameter  $D\in (0,+\infty]$.  Assume is satisfies the   $\RCD^*(K,N)$ condition  for some $K\in \R, N \in (1,\infty)$ or $N=1, K\geq 0$. Then  for every Borel set $E\subset X$ it holds
$$\mm^+(E)\geq \cI_{K,N,D}(\mm(E)). $$
In other words it holds $\cI_{(X,\sfd,\mm)}(v)\geq \cI_{K,N,D}(v)$ for every $v \in [0,1]$.
\end{theorem}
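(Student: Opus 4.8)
The plan is to prove the estimate by the \emph{localization technique} (the ``needle decomposition'' in the spirit of Payne--Weinberger, Gromov--Milman and Kannan--Lov\'asz--Simonovits, refined by Klartag in the Riemannian setting), adapted to the non-smooth $\RCD^*(K,N)$ framework, reducing the $N$-dimensional isoperimetric problem to a one-parameter family of one-dimensional weighted isoperimetric problems on intervals with $\CD(K,N)$ densities.

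Fix a Borel set $E\subset X$; we may assume $v:=\mm(E)\in(0,1)$ and $\mm^+(E)<\infty$, the other cases being trivial. Consider the \emph{guiding function} $f:=\ind_E-v$, which satisfies $\int_X f\,d\mm=0$ and is nowhere zero, being $1-v>0$ on $E$ and $-v<0$ on $X\setminus E$. A Kantorovich potential for the Monge problem with cost $\sfd$ between the normalizations of $f^+\mm$ and $f^-\mm$ induces, on a subset $\mathcal{T}\subset X$ of full $\mm$-measure (the residual set $X\setminus\mathcal{T}$ being $\mm$-negligible precisely because $f$ is nowhere zero), a partition into transport rays $\{X_q\}_{q\in Q}$, each a non-trivial geodesic of length at most $D=\diam(X)$, together with a disintegration of $\mm$ restricted to $\mathcal{T}$ as $\int_Q\mm_q\,d\qq(q)$, where $\qq$ is a probability on the quotient $Q$ and each $\mm_q$ is a probability concentrated on $\overline{X_q}$. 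The two properties needed are: (i) in arc-length coordinates on $X_q$ one has $\mm_q=h_q\,\mathcal{L}^1$ with $h_q$ a $\CD(K,N)$ density on an interval of length $\le D$ (inherited from the $\CD(K,N)$/$\MCP$ condition of $X$ and its essential non-branching); and (ii) $\int_{X_q}f\,d\mm_q=0$, i.e.\ $\mm_q(E)=v$, for $\qq$-a.e.\ $q$.

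Next, localize the Minkowski content. For $\ve>0$ and each $q$ one has $E^\ve\cap X_q\supseteq(E\cap X_q)^\ve$, the latter neighborhood taken inside $X_q$ with the restriction of $\sfd$; since $\mm_q$ is concentrated on $X_q$, this gives $\mm_q(E^\ve)-\mm_q(E)\ge \mm_q\bigl((E\cap X_q)^\ve\bigr)-\mm_q(E\cap X_q)\ge 0$. Writing $\mm^+(E)=\liminf_{\ve\downarrow0}\tfrac1\ve\int_Q\bigl(\mm_q(E^\ve)-\mm_q(E)\bigr)\,d\qq(q)$ and applying Fatou's lemma to the nonnegative integrands,
\[
\mm^+(E)\ \ge\ \int_Q\liminf_{\ve\downarrow0}\frac{\mm_q\bigl((E\cap X_q)^\ve\bigr)-\mm_q(E\cap X_q)}{\ve}\,d\qq(q)\ =\ \int_Q\mm_q^+(E\cap X_q)\,d\qq(q).
\]
Then invoke the one-dimensional model comparison: if $\mu=h\,\mathcal{L}^1$ is a probability on an interval of length $\le D$ with $h$ a $\CD(K,N)$ density, then its one-dimensional isoperimetric profile satisfies $\cI_\mu\ge\cI_{K,N,D}$ — this is exactly (the sharp) content of Milman's analysis of the model profile, provable by a purely one-dimensional ODE/convexity argument, with the cases $N=1,K\ge0$ and $K\le0$ treated in the corresponding one-dimensional form. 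Applying it on each needle with $\mu=\mm_q$ and using $\mm_q(E)=v$ yields $\mm_q^+(E\cap X_q)\ge\cI_{K,N,D}(v)$ for $\qq$-a.e.\ $q$, and integrating over $Q$ gives $\mm^+(E)\ge\cI_{K,N,D}(v)=\cI_{K,N,D}(\mm(E))$, which is the claim.

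The main obstacle is Step~1: establishing the localization/disintegration theorem in the general $\RCD^*(K,N)$ (indeed, essentially non-branching $\CD^*(K,N)$) setting — namely that the transport rays of the $L^1$-optimal coupling associated to $f$ form a measurable partition whose disintegration of $\mm$ consists of one-dimensional measures inheriting the $\CD(K,N)$ bound. This requires a careful study of the structure of the transport set (measurability of the ray map, a quantitative estimate ruling out branching of rays $\mm$-a.e., absolute continuity of the conditional measures with respect to $\mathcal{H}^1$ along each ray) and the transfer of the curvature–dimension inequality from $X$ to the rays via the $\MCP$/$\CD$ machinery; this is the non-smooth analogue of Klartag's needle decomposition and is the technical heart of the argument. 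By comparison Steps~2 and~3 are soft: a Fatou argument and a one-dimensional computation, respectively.
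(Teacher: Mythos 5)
Your proposal follows essentially the same route as the paper's proof: the needle decomposition of Theorem \ref{T:localize} applied to the zero-mean function $f=\chi_E-\mm(E)$ (whose being nowhere zero forces the residual set to be $\mm$-negligible), the neighborhood-inclusion plus Fatou argument to localize the Minkowski content along the rays, and the one-dimensional comparison with the model profile, exactly as in Theorem \ref{T:iso}. The only step you gloss over is that the ray densities $h_q$ are a priori only continuous and satisfy the synthetic convexity inequality \eqref{E:curvdensmm} rather than being smooth, so invoking Milman's model profile requires the regularization step carried out in Lemma \ref{lem:approxh} and Theorem \ref{thm:I=Is}; this is a soft technical point that does not alter the structure of your argument.
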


\begin{remark}\label{rem:CD*nb}
Theorems \ref{thm:LG} and  \ref{thm:mainIsoComp} hold (and will be proved) in the more general framework of  essentially non branching $\CD_{loc}(K,N)$-spaces, but we decided to state them in this form so to give a unified presentation also with the rigidity statement below. The restriction $K\geq 0$ if $N=1$ is due to the fact that for $K<0$ and  $N=1$ the $\CD_{loc}(K,N)$ does not imply $\CD^*(K,N)$, see  Remark \ref{rk:CDCDs} for more details.
\end{remark}

The natural question is now rigidity: if for some $v \in (0,1)$ it holds $\cI_{(X,\sfd,\mm)}(v)= \cI_{K,N,\infty}(v)$,  does it imply that $X$ has a special structure? The answer is given by the following theorem. Before stating the  result let us observe that if $(X,\sfd,\mm)$ is an $\RCD^*(K,N)$ space for some $K>0$ then, 
called $\sfd':=\sqrt{\frac{K}{N-1}} \; \sfd$, we have that $(X,\sfd',\mm)$ is  $\RCD^*(N-1,N)$; 
in other words, if the Ricci lower bound is $K>0$ then up to scaling we can assume it is actually equal to $N-1$.

\begin{theorem}\label{thm:Rigidity}
Let $(X,\sfd,\mm)$ be an  $\RCD^*(N-1,N)$ space for some  $N \in [2,\infty)$, with  $\mm(X)=1$. 
Assume that there exists $\bar{v} \in (0,1)$ such that $\cI_{(X,\sfd,\mm)}(\bar{v})=\cI_{N-1,N,\infty}(\bar{v})$. 
\medskip

Then $(X,\sfd,\mm)$ is a spherical suspension:  there exists
an $\RCD^*(N-2,N-1)$ space $(Y,\sfd_{Y}, \mm_{Y})$ with $\mm_{Y}(Y)=1$ such that  $X$ is isomorphic as metric measure space to $[0,\pi] \times^{N-1}_{\sin} Y$.

Moreover, in this case, the following hold:
\begin{itemize}
\item[$i)$]  For every $v\in [0,1]$ it holds  $\cI_{(X,\sfd,\mm)}(v)=\cI_{N-1,N,\infty}(v)$. 
\item[$ii)$] For every $v\in [0,1]$ there exists a Borel subset $A \subset X$ with $\mm(A)=v$ such that 
$$
\mm^+(A)=\cI_{(X,\sfd,\mm)}(v)=\cI_{N-1,N,\infty}(v).
$$
\item[$iii)$]  If $\mm(A)\in (0,1)$ then  $\mm^+(A)=\cI_{(X,\sfd,\mm)}(v)=\cI_{N-1,N,\infty}(v)$ if and only if
$$
\bar{A}=\{(t,y)\in [0,\pi] \times^{N-1}_{\sin} Y \, :\, t \in [0,r_v] \} \quad \text{or} \quad   \bar{A}=\{(t,y)\in [0,\pi] \times^{N-1}_{\sin} Y \, :\, t \in [\pi-r_v, \pi] \},
$$
where $\bar{A}$ is the closure of $A$ and  $r_v\in (0,\pi)$ is chosen so 
that $\int_{[0,r_v]} c_N (\sin(t))^{N-1} dt =v$, $c_N$ being given by $c_N^{-1}:= \int_{[0,\pi]}  (\sin(t))^{N-1} dt$.
\end{itemize}
\end{theorem}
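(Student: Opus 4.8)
The plan is to obtain the first assertion \emph{directly} from Theorem \ref{thm:mainIsoComp}, and then to deduce $(i)$--$(ii)$ by an explicit polar-cap computation and $(iii)$ by a second application of the localization technique. Since $K=N-1>0$, Bonnet-Myers gives $\mathrm{diam}(X)=:D\le\pi$ and $X$ compact, and Theorem \ref{thm:mainIsoComp} yields $\cI_{(X,\sfd,\mm)}\ge\cI_{N-1,N,D}\ge\cI_{N-1,N,\infty}$, the model profile being non-increasing in the diameter parameter and constant for $D\ge\pi$. The hypothesis $\cI_{(X,\sfd,\mm)}(\bar v)=\cI_{N-1,N,\infty}(\bar v)$ then forces $\cI_{N-1,N,D}(\bar v)=\cI_{N-1,N,\pi}(\bar v)$, and since $D\mapsto\cI_{N-1,N,D}(\bar v)$ is strictly decreasing for $D<\pi$ (a property of the model profile, see \cite{Mil}) this is possible only if $D=\pi$. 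Thus $\mathrm{diam}(X)=\pi$, and Ketterer's maximal-diameter theorem \cite{Ket} applies: an $\RCD^*(N-1,N)$ space with $N\ge 2$ and diameter $\pi$ is isomorphic as a metric measure space to a spherical suspension $[0,\pi]\times^{N-1}_{\sin}Y$ over an $\RCD^*(N-2,N-1)$ space $(Y,\sfd_Y,\mm_Y)$ with $\mm_Y(Y)=1$. From now on I work in this representation and write $\pi_1\colon X\to[0,\pi]$ for the projection, so that $(\pi_1)_\#\mm=c_N\sin^{N-1}(t)\,dt$.

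For $(i)$ and $(ii)$ I test with the polar caps. Given $v\in(0,1)$, with $r_v$ as in the statement put $A_v:=\{\pi_1\le r_v\}$, so $\mm(A_v)=v$. In a warped product $\pi_1$ is $1$-Lipschitz and each curve $t\mapsto(t,y_0)$ is a unit-speed geodesic, hence $A_v^\ve=\{\pi_1<r_v+\ve\}$ and $\mm(A_v^\ve)=\int_0^{r_v+\ve}c_N\sin^{N-1}$; therefore $\mm^+(A_v)=c_N\sin^{N-1}(r_v)=\cI_{N-1,N,\infty}(v)$. Combined with Theorem \ref{thm:mainIsoComp} this gives $\cI_{(X,\sfd,\mm)}(v)=\cI_{N-1,N,\infty}(v)$ for every $v\in[0,1]$ and exhibits $A_v$ as an isoperimetric region, which is $(i)$ and $(ii)$ (for $v\in\{0,1\}$ both sides vanish); the identical computation for $\{\pi_1\ge\pi-r_v\}$ gives the ``if'' direction of $(iii)$.

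For the ``only if'' direction of $(iii)$, let $A$ be an isoperimetric region with $\mm(A)=v\in(0,1)$, so $\mm^+(A)=\cI_{N-1,N,\infty}(v)=\cI_{N-1,N,\pi}(v)$. Here I use the localization machinery behind Theorem \ref{thm:mainIsoComp} (an $\RCD^*(K,N)$ space is essentially non-branching): applied to the zero-mean function $u:=\ind_A-v$ it produces a disintegration $\mm=\int_Q\mm_q\,d\qq(q)$ along transport rays, each $(X_q,\sfd,\mm_q)$ being a one-dimensional $\CD(N-1,N)$ needle of length $L_q\in(0,\pi]$ with $\mm_q(A\cap X_q)=v$, together with $\mm^+(A)\ge\int_Q\mm_q^+(A\cap X_q)\,d\qq(q)\ge\int_Q\cI_{N-1,N,L_q}(v)\,d\qq(q)\ge\cI_{N-1,N,\pi}(v)$. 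Equality throughout forces, for $\qq$-a.e.\ $q$: $L_q=\pi$; the density of $\mm_q$ equals $c_N\sin^{N-1}$ on $[0,\pi]$, so $(X_q,\sfd,\mm_q)$ is isomorphic to the model needle; and, by the one-dimensional rigidity, $A\cap X_q$ is, up to an $\mm_q$-null set, an end-interval of the model needle of length $r_v$. Re-orienting each needle so that this interval starts at arc-length $0$ defines a Borel map $\mathfrak a\colon X\to[0,\pi]$, defined $\mm$-a.e., with $\mathfrak a_\#\mm=c_N\sin^{N-1}(t)\,dt$ and $A=\{\mathfrak a\le r_v\}$ up to an $\mm$-null set.

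It remains to identify $\mathfrak a$ with the distance from a pole. Each needle $X_q$ is a geodesic of length $\pi=\mathrm{diam}(X)$, hence joins a pair of points at distance $\pi$. If $X$ is not isometric to the round sphere $\mathbb{S}^N$, then $\mathrm{diam}(Y)<\pi$ (by iterating the maximal-diameter theorem), and a direct computation with the suspension metric shows the only pair of points of $[0,\pi]\times^{N-1}_{\sin}Y$ at distance $\pi$ is the pair of poles; hence all transport rays are meridians, $\mathfrak a=\sfd(\cdot,O^-)$ (or $\sfd(\cdot,O^+)$, after the global orientation flip), so $A$ coincides up to an $\mm$-null set with $\{\pi_1\le r_v\}$ or $\{\pi_1\ge\pi-r_v\}$, giving the claimed form of $\bar A$; the case $X=\mathbb{S}^N$ is classical, since there isoperimetric regions are metric balls and a metric ball is a polar cap for the suspension structure whose poles are its center and antipode. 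The main technical point is precisely this last step -- upgrading the per-needle rigidity produced by the localization to the global statement that the transport rays form the meridian fibration of a single spherical-suspension structure -- together with verifying the monotonicity and rigidity properties of the model profiles $\cI_{N-1,N,D}$ that make the equality deductions above strict; the remaining ingredients (the localization theorem and Ketterer's maximal-diameter theorem) are invoked as black boxes.
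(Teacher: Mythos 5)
Your treatment of the first assertion, of $(i)$, $(ii)$ and of the ``if'' half of $(iii)$ follows essentially the paper's route (strict dependence of the model profile on the diameter --- which the paper proves in Lemma \ref{lem:D<inf} rather than quotes --- then Ketterer's maximal diameter theorem, then the explicit polar-cap computation as in Step 3 of the paper's proof). The genuine gap is in the ``only if'' half of $(iii)$. The one-dimensional rigidity only tells you that, on $\qq$-a.e.\ needle, $A\cap X_q$ is an end-interval of the needle of measure $v$, and it may sit at \emph{either} end, the choice a priori depending on $q$. By re-orienting each needle separately so that this interval starts at arc-length $0$ you erase exactly this information: your map $\mathfrak a$ coincides with $\sfd(\cdot,O^-)$ on the needles where $A$ sits at the south end and with $\sfd(\cdot,O^+)$ on those where it sits at the north end, and nothing in your argument excludes the mixed configuration $A=\big([0,r_v]\times Q_1\big)\cup\big([\pi-r_v,\pi]\times Q_2\big)$ with $\mm_Y(Q_1),\mm_Y(Q_2)>0$, which is not a cap; the phrase ``after the global orientation flip'' presupposes precisely the global consistency that has to be proved. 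Ruling this out is the substantive part of the paper's Step 2: for such a mixed set one bounds $\mm(A^\ve)-\mm(A)$ from below by the two ``horizontal'' contributions plus interface terms proportional to $\mm_Y^+(Q_i)$, and these are strictly positive because $(Y,\sfd_Y,\mm_Y)$ is an $\RCD^*(N-2,N-1)$ space and hence satisfies its own L\'evy--Gromov inequality; this forces $\mm^+(A)>\cI_{N-1,N,\pi}(v)$, a contradiction. (Alternatively one would have to exploit that all needles carry the \emph{common} orientation induced by the single potential $\f$ associated with $A$, together with the direction of the $L^1$-transport along each needle; you never use either.)

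A second, smaller, problem is your identification of the needles with the meridians: the dichotomy ``either $X$ is isometric to ${\mathbb S}^N$ or the poles are the unique pair at distance $\pi$'' is not justified. If $\diam (Y)=\pi$ then points $(t,y_1)$, $(\pi-t,y_2)$ with $\sfd_Y(y_1,y_2)=\pi$ are at distance $\pi$, and this does not force $X\simeq{\mathbb S}^N$ (for non-integer $N$ it cannot), so there may be many diameter-realizing geodesics which are not meridians. The paper avoids this issue altogether: fixing one needle of length $\pi$, it takes its endpoints as the poles of the suspension produced by Ketterer's theorem and computes, via Lemma \ref{L:cicli}, that the Kantorovich potential satisfies $\f((t,y))=\pi-t$ for \emph{all} $(t,y)$; this simultaneously shows that every needle is a meridian, fixes a common orientation on all of them, identifies $(Q,\qq)$ with $(Y,\mm_Y)$, and yields $h_q=c_N(\sin t)^{N-1}$ by uniqueness of the disintegration (you instead assert the model density from a 1-d rigidity you do not prove). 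Replacing your distance-$\pi$ argument by this computation, and adding the interface argument (or something of comparable substance) to exclude the mixed configuration, is what is needed to complete $(iii)$.
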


A last question we address here is the almost rigidity:  if $(X,\sfd,\mm)$ is an $\RCD^*(K,N)$ space such that    $\cI_{(X,\sfd,\mm)}(v)$ is close to $\cI_{K,N,\infty}(v)$ for some $v \in (0,1)$,  does this force  $X$ to be close to a spherical suspension? Let us mention that variants of  this problem were addressed  for   smooth Riemannian $N$-manifolds satisfying Ricci $\geq N-1$: Croke \cite{Croke} proved that the ratio between the profile of the manifold over the profile of the sphere is uniformly bounded from below by a constant which is strictly more than 1 as soon as the compact manifold is not isometric to the canonical $N$-sphere; this has been quantitatively estimated in \cite{BBG} where B\'erard, Besson, and Gallot gave explicit expressions of the infimum of the ratio in terms of the Ricci curvature and the diameter; finally  Bayle \cite{Bayle} proved that if the isoperimetric profile is close in the  \emph{uniform norm} to the one of the $N$-sphere then the diameter is almost maximal; combining this fact with the Maximal Diameter Theorem for limit spaces proved by Cheeger-Colding \cite{CC96},  one gets that the manifold must be close to a spherical suspension.    

The next Theorem  \ref{thm:AlmRig} together with its Corollary \ref{cor:AlmRig} extend the above results in two ways: first of all we assume closeness just for \emph{some} $v\in (0,1)$ and not  uniform closeness for every $v\in [0,1]$, second  we give a complete answer in  the larger  class of $\RCD^*(K,N)$ spaces.

\begin{theorem}[Almost equality in L\'evy-Gromov implies almost maximal diameter]\label{thm:AlmRig}
For every  $N>1$, $v \in (0,1)$, $\ve>0$ there exists $\bar{\delta}=\bar{\delta}(N,v,\ve)>0$ such that the following holds. For every $\delta\in [0, \bar{\delta}]$,  if $(X,\sfd,\mm)$ is an $\RCD^*(N-1-\delta,N+\delta)$ space satisfying 
$$\cI_{(X,\sfd,\mm)}(v)\leq \cI_{N-1,N,\infty}(v)+\delta, $$
Then $\diam((X,\sfd)) \geq \pi-\ve$.
\end{theorem}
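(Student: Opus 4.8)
The plan is a compactness-and-contradiction argument that reduces the almost-rigidity statement to the rigidity Theorem~\ref{thm:Rigidity}. I would argue by contradiction: if the statement failed, there would be $N>1$, $v\in(0,1)$, $\ve>0$, a sequence $\delta_n\downarrow 0$ and $\RCD^*(N-1-\delta_n,N+\delta_n)$ spaces $(X_n,\sfd_n,\mm_n)$ with $\mm_n(X_n)=1$ such that
$$
\cI_{(X_n,\sfd_n,\mm_n)}(v)\leq \cI_{N-1,N,\infty}(v)+\delta_n\qquad\text{and}\qquad \diam((X_n,\sfd_n))<\pi-\ve .
$$
By the generalized Bonnet--Myers theorem the diameters are uniformly bounded ($\diam((X_n,\sfd_n))\leq \pi\sqrt{(N+\delta_n-1)/(N-1-\delta_n)}$, which stays below a fixed constant once $\delta_n$ is small), so by Gromov's precompactness theorem a subsequence converges in the measured Gromov--Hausdorff (mGH) topology to a compact m.m.s.\ $(X_\infty,\sfd_\infty,\mm_\infty)$ with $\mm_\infty(X_\infty)=1$. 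By the stability of the reduced curvature-dimension condition under mGH convergence, with converging parameters $N-1-\delta_n\to N-1$ and $N+\delta_n\to N$, the limit is an $\RCD^*(N-1,N)$ space; and since $\diam$ is lower semicontinuous under mGH convergence, $\diam((X_\infty,\sfd_\infty))\leq\liminf_n\diam((X_n,\sfd_n))\leq \pi-\ve<\pi$.

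The heart of the proof is to show that the isoperimetric profile is lower semicontinuous along this convergence at the value $v$, i.e.\ $\cI_{(X_\infty,\sfd_\infty,\mm_\infty)}(v)\leq \liminf_n \cI_{(X_n,\sfd_n,\mm_n)}(v)$; combined with the contradiction hypothesis this forces $\cI_{(X_\infty,\sfd_\infty,\mm_\infty)}(v)\leq \cI_{N-1,N,\infty}(v)$. To prove the semicontinuity I would, for each $n$, choose a Borel set $E_n\subset X_n$ of finite perimeter with $\mm_n(E_n)=v$ whose perimeter $P(E_n)$ is at most $\cI_{(X_n,\sfd_n,\mm_n)}(v)+1/n$ --- this uses $P(E_n)\leq\mm_n^+(E_n)$ together with the fact, available in this setting, that the perimeter and the Minkowski-content isoperimetric profiles coincide, so such nearly optimal competitors exist. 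Realizing the mGH convergence in a common metric space, the indicator functions $\ind_{E_n}$ have uniformly bounded total variation, so by the $L^1$-compactness of sequences of sets of finite perimeter under mGH convergence (available since the $X_n$ are uniformly doubling and support a uniform Poincar\'e inequality) they converge, up to a subsequence, in strong $L^1$ to $\ind_{E_\infty}$ for some Borel $E_\infty\subset X_\infty$, with no loss of mass --- so $\mm_\infty(E_\infty)=v$ --- and $P(E_\infty)\leq \liminf_n P(E_n)$ by lower semicontinuity of the perimeter. Using once more the identity between the Minkowski-content and the perimeter isoperimetric profiles in $\RCD^*$ spaces, one gets $\cI_{(X_\infty,\sfd_\infty,\mm_\infty)}(v)\leq P(E_\infty)\leq \cI_{N-1,N,\infty}(v)$. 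On the other hand Theorem~\ref{thm:mainIsoComp} applied to $(X_\infty,\sfd_\infty,\mm_\infty)$, whose finite diameter $D_\infty$ only improves the model bound ($\cI_{N-1,N,D_\infty}\geq\cI_{N-1,N,\infty}$), gives $\cI_{(X_\infty,\sfd_\infty,\mm_\infty)}(v)\geq\cI_{N-1,N,\infty}(v)$. Hence equality holds in the L\'evy--Gromov--Milman inequality for $(X_\infty,\sfd_\infty,\mm_\infty)$ at the volume $v\in(0,1)$.

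Finally I would invoke rigidity. When $N\geq 2$, Theorem~\ref{thm:Rigidity} (the Ricci bound being already normalized to $N-1$) applies and forces $(X_\infty,\sfd_\infty,\mm_\infty)$ to be a spherical suspension $[0,\pi]\times^{N-1}_{\sin}Y$, which has diameter exactly $\pi$; when $1<N<2$ the analysis of the equality case in the one-dimensional localization underlying the proof of Theorem~\ref{thm:mainIsoComp} (equivalently, the maximal-diameter rigidity) forces $X_\infty$ to be the one-dimensional model $([0,\pi],|\cdot|,c_N\sin^{N-1}(t)\,dt)$, again of diameter $\pi$. In either case $\diam((X_\infty,\sfd_\infty))=\pi$, contradicting $\diam((X_\infty,\sfd_\infty))<\pi$, and the proof is complete.

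The step I expect to be the main obstacle is the lower semicontinuity of the profile along mGH convergence: it requires the existence of (almost) optimal competitors with the prescribed volume, the $L^1$-precompactness of sets of finite perimeter under mGH with no loss of mass (so that the constraint $\mm(E)=v$ survives in the limit), the lower semicontinuity of the perimeter, and the identification of the Minkowski-content isoperimetric profile with the perimeter one in $\RCD^*$ spaces; the uniform doubling and Poincar\'e inequalities enjoyed by $\RCD^*(K,N)$ spaces of bounded diameter are what make all of these ingredients available.
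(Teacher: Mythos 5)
Your route is genuinely different from the paper's, which never passes to a limit space in the proof of Theorem \ref{thm:AlmRig}. The paper proves two elementary quantitative facts, Lemma \ref{lem:D<inf} (a uniform gap $\cI_{N-1-\delta,N+\delta,D}(v)\geq \cI_{N-1-\delta,N+\delta,\infty}(v)+3\eta$ for all $D\leq \pi-\ve$ and all small $\delta$) and Lemma \ref{lem:Icont} (continuity of $\delta\mapsto \cI_{N-1-\delta,N+\delta,\infty}(v)$), and then runs the one-dimensional localization of Theorem \ref{T:localize} directly on the given space: a near-optimal set $A$ with $\mm(A)=v$ satisfies $\mm^+(A)\geq \int_Q \cI_{N-1-\delta,N+\delta,|\supp(h_q)|}(v)\,\qq(dq)$, and if $\diam (X,\sfd)\leq \pi-\ve$ every ray has length at most $\pi-\ve$, so the gap $\eta$ contradicts the almost-equality hypothesis. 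The mGH compactness/stability machinery is used only afterwards, in Corollary \ref{cor:AlmRig}.

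As written, your argument has a genuine gap at the step you yourself single out: the lower semicontinuity of the Minkowski-content isoperimetric profile along the mGH-converging sequence. This is not available from anything in the paper and is far from a soft fact: it requires (i) an $L^1$-compactness theorem for sets of uniformly bounded perimeter along a sequence of \emph{varying} metric measure spaces, together with lower semicontinuity of the perimeter in that moving-space setting, and (ii) the identification of the Minkowski-content profile with the perimeter profile on the limit space. Point (ii) is delicate precisely in the direction you need it: the relaxation results expressing perimeter as relaxed Minkowski content produce competitors whose measures only \emph{converge} to $v$, and Minkowski content, unlike perimeter, is not stable under the small modifications (adding or removing balls) one would use to restore the volume constraint exactly, so an additional argument (continuity of the profile in $v$, or a version of Lemma \ref{lem:D<inf} uniform in $v$ on compact subsets of $(0,1)$) is needed and is not supplied. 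Treating all of this as a black box leaves the proof incomplete. A smaller, fixable issue: Theorem \ref{thm:Rigidity} is stated only for $N\in[2,\infty)$, so invoking it for $1<N<2$ is outside its hypotheses; but you only need ``equality at one volume forces diameter $\pi$'', which holds for every $N>1$ by the first step of the proof of Theorem \ref{thm:Rigidity}, since that step uses only Lemma \ref{lem:D<inf} and Theorem \ref{T:iso}. Note, finally, that once you are willing to use Lemma \ref{lem:D<inf} quantitatively, the entire compactness detour becomes unnecessary: applying it together with Lemma \ref{lem:Icont} directly to $(X,\sfd,\mm)$, as the paper does, yields the theorem with no limit space and no profile semicontinuity.
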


The following corollary is a consequence of the Maximal Diameter Theorem \cite{Ket}, and of  the compactness/stability of the class of $\RCD^*(K,N)$ spaces, for some fixed $K>0$ and $N>1$,  with respect to the measured Gromov-Hausdorff convergence. Recall also that the measured Gromov Hausdorff convergence restricted to (isomorphism classes of) $\RCD^*(K,N)$ spaces   is metrizable (for more details see  Subsection \ref{SS:mGHConv}). 

\begin{corollary}[Almost equality in L\'evy-Gromov implies mGH-closeness to a spherical suspension] \label{cor:AlmRig}
For every $N\in [2, \infty) $, $v \in (0,1)$, $\ve>0$ there exists $\bar{\delta}=\bar{\delta}(N,v,\ve)>0$ such that the following hold. For every  $\delta \in [0, \bar{\delta}]$, if  $(X,\sfd,\mm)$ is an $\RCD^*(N-1-\delta,N+\delta)$ space satisfying 
$$\cI_{(X,\sfd,\mm)}(v)\leq \cI_{N-1,N,\infty}(v)+\delta, $$
then  there exists an $\RCD^*(N-2,N-1)$ space $(Y, \sfd_Y, \mm_Y)$ with $\mm_Y(Y)=1$ such that 
$$\sfd_{mGH}(X, [0,\pi] \times_{\sin}^{N-1} Y) \leq \ve. $$
\end{corollary}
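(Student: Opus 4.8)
The plan is to derive Corollary~\ref{cor:AlmRig} from Theorem~\ref{thm:AlmRig} by a soft compactness argument, reasoning by contradiction. Suppose the statement were false: then there would exist $N\in[2,\infty)$, $v\in(0,1)$, $\ve>0$, a sequence $\delta_j\downarrow 0$ and, for each $j$, an $\RCD^*(N-1-\delta_j,N+\delta_j)$ space $(X_j,\sfd_j,\mm_j)$ with $\cI_{(X_j,\sfd_j,\mm_j)}(v)\le\cI_{N-1,N,\infty}(v)+\delta_j$, but such that $\sfd_{mGH}\bigl(X_j,[0,\pi]\times^{N-1}_{\sin}Y\bigr)>\ve$ for \emph{every} $\RCD^*(N-2,N-1)$ space $(Y,\sfd_Y,\mm_Y)$ with $\mm_Y(Y)=1$. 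Let $\bar\delta_k:=\bar\delta(N,v,1/k)>0$ be the thresholds furnished by Theorem~\ref{thm:AlmRig}, which we may take non-increasing in $k$; since $\delta_j\to0$ one can, for each large $j$, pick the largest $k(j)\le j$ with $\delta_j\le\bar\delta_{k(j)}$, and then $k(j)\to\infty$. Applying Theorem~\ref{thm:AlmRig} with error $1/k(j)$ then gives $\diam(X_j)\ge\pi-1/k(j)$.

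Next I would pass to a limit. For $j$ large we have $N-1-\delta_j\ge(N-1)/2>0$ and $N+\delta_j\le N+1<\infty$, so all the $X_j$ (with the normalization $\mm_j(X_j)=1$) belong to the class of $\RCD^*((N-1)/2,N+1)$ spaces, which is precompact for measured Gromov--Hausdorff convergence; passing to a subsequence (not relabelled), $X_j\to X_\infty$ in the mGH sense for some m.m.s.\ $(X_\infty,\sfd_\infty,\mm_\infty)$ with $\mm_\infty(X_\infty)=1$. By the stability of the reduced curvature-dimension condition under mGH convergence with varying parameters (here $N-1-\delta_j\to N-1$ and $N+\delta_j\to N$), $X_\infty$ is an $\RCD^*(N-1,N)$ space; in particular $\diam(X_\infty)\le\pi$ by Bonnet--Myers. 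On the other hand the diameter is continuous along mGH-convergent sequences of compact spaces, so $\diam(X_\infty)=\lim_j\diam(X_j)\ge\pi$; hence $\diam(X_\infty)=\pi$.

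Finally I would invoke Ketterer's Maximal Diameter Theorem \cite{Ket}: since $N\ge2$ and $(X_\infty,\sfd_\infty,\mm_\infty)$ is an $\RCD^*(N-1,N)$ space of diameter $\pi$, it provides an $\RCD^*(N-2,N-1)$ space $(Y,\sfd_Y,\mm_Y)$ with $\mm_Y(Y)=1$ such that $X_\infty$ is isomorphic as a metric measure space to $[0,\pi]\times^{N-1}_{\sin}Y$. But then, along the subsequence, $\sfd_{mGH}\bigl(X_j,[0,\pi]\times^{N-1}_{\sin}Y\bigr)=\sfd_{mGH}(X_j,X_\infty)\to0$, contradicting the standing assumption. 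Hence the corollary holds.

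I do not expect a genuine obstacle here: granted Theorem~\ref{thm:AlmRig}, the argument is a routine combination of Gromov precompactness, stability of the $\RCD^*$ condition under mGH limits, continuity of the diameter, and the Maximal Diameter Theorem. The only points that need a little care are the diagonal bookkeeping of the thresholds $\bar\delta_k$ and making sure one uses the version of the stability theorem that allows the curvature and dimension parameters to converge to those of the limit; both are by now standard.
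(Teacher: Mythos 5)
Your argument is correct and is essentially the paper's own proof: a contradiction sequence, Theorem \ref{thm:AlmRig} forcing $\diam(X_j)\to\pi$, mGH compactness and stability of the $\RCD^*$ condition (with converging parameters) to get an $\RCD^*(N-1,N)$ limit of diameter $\pi$, and Ketterer's Maximal Diameter Theorem to identify the limit as a spherical suspension, contradicting the assumed lower bound on $\sfd_{mGH}$. The extra care you take with the diagonal choice of thresholds $\bar\delta(N,v,1/k)$ and with the varying-parameter stability is just a more explicit rendering of what the paper does implicitly.
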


\begin{remark}[Notable examples of spaces fitting in the assumptions of the main theorems]
The class of $\RCD^*(K,N)$ spaces include many remarkable family of spaces, among them:
\begin{itemize}
\item \emph{Measured Gromov Hausdorff limits of Riemannian $N$-dimensional manifolds  satisfying Ricci $\geq K$}. Despite the fine structural  properties of   such spaces discovered in a series of works by Cheeger-Colding \cite{CC1,CC2,CC3} and Colding-Naber \cite{CN}, the validity of the L\'evy-Gromov isoperimetric inequality (and the above generalizations and rigidity statements) has remained elusive.  We believe this is one of the most striking applications of our results.   For Ricci limit spaces let us  also mention the recent work by Honda \cite{Honda} where  a lower bound on the Cheeger constant is given, thanks to a stability argument on the first eigenvalue of the $p$-Laplacian for $p=1$.
\item \emph{Alexandrov spaces with curvature bounded from below}. 
Petrunin \cite{PLSV} proved that the lower curvature bound in the sense of comparison angles is compatible with the optimal transport type lower bound on the  Ricci curvature given by Lott-Sturm-Villani (see also \cite{zhangzhu}).  Moreover it is well known that the Laplace operator on an Alexandrov space is linear. It follows that Alexandrov spaces with curvature bounded from below are examples of $\RCD^*(K,N)$ and therefore our results apply as well. Let us note that in the framework of Alexandrov spaces the best result regarding isoperimetry is a sketch of a proof by Petrunin \cite{Pet} of the L\'evy-Gromov inequality for Alexandrov spaces with (sectional) curvature bounded below by 1. 
\end{itemize}
A last class of spaces where  Theorems \ref{thm:LG} and  \ref{thm:mainIsoComp} apply is the one of smooth  Finsler manifolds where the norm on the tangent spaces is strongly convex, and which satisfy lower Ricci curvature bounds. More precisely we consider a $C^{\infty}$-manifold  $M$, endowed with a function $F:TM\to[0,\infty]$ such that $F|_{TM\setminus \{0\}}$ is $C^{\infty}$ and  for each $p \in M$ it holds that $F_p:=T_pM\to [0,\infty]$ is a  strongly-convex norm, i.e.
$$g^p_{ij}(v):=\frac{\partial^2 (F_p^2)}{\partial v^i \partial v^j}(v) \quad \text{is a positive definite matrix at every } v \in T_pM\setminus\{0\}. $$
Under these conditions, it is known that one can write the  geodesic equations and geodesics do not branch; in other words these spaces are non-branching. 
We also assume $(M,F)$ to be geodesically complete and endowed with a $C^{\infty}$ probability measure $\mm$ in a such a way that the associated m.m.s. $(X,F,\mm)$ satisfies the $\CD^*(K,N)$ condition. This class of spaces has been investigated by Ohta \cite{Ohta} who established the equivalence between the Curvature Dimension condition and a Finsler-version of Bakry-Emery $N$-Ricci tensor bounded from below.  Recalling Remark \ref{rem:CD*nb},  these spaces  fit in the assumptions of  Theorems \ref{thm:LG}-\ref{thm:mainIsoComp}, and  to our knowledge the L\'evy-Gromov inequality (and its generalizations) is new also in this framework. \hfill$\Box$
\end{remark}

\subsection{Outline of the argument}
The main reason why the L\'evy-Gromov type inequalities have remained elusive in non smooth metric measure spaces is because the known proofs heavily rely on the existence and  sharp regularity properties of isoperimetric regions ensured by Geometric Measure Theory (see for instance \cite{Gro,Mag,Mor}).  Clearly such tools are available if the ambient space is a smooth Riemannian manifold (possibly endowed with a weighted measure, with smooth and strictly positive weight), but are out of disposal for general metric measure spaces.  

In order to overcome this huge difficulty we have been inspired by a paper of Klartag \cite{klartag} where the author gave a proof of the L\'evy-Gromov isoperimetric inequality still  in the framework of smooth Riemannian manifolds, but via an optimal transportation argument involving $L^1$-transportation and ideas of convex geometry. In particular he used a  localization technique, having its roots  in a work of   Payne-Weinberger \cite{PW} and developed by Gromov-Milman \cite{GrMi}, Lov\'asz-Simonovits \cite{LoSi} and Kannan-Lov\'asz-Simonovits \cite{KaLoSi}, which consists in reducing an $n$-dimensional problem, via tools of convex geometry,  to one-dimensional problems that one can handle.
\\

Let us stress that even if the approach by Klartag \cite{klartag} does not rely on the regularity of the isoperimetric region, it still heavily  makes use of the smoothness of the ambient space in order to establish sharp properties of the geodesics in terms of Jacobi fields and estimates on the  second fundamental forms of suitable level sets, all objects that are still not enough understood in general m.m.s. in order to repeat the same arguments.
\\

To overcome this difficulty we use the structural properties of geodesics and of $L^1$-optimal transport implied by the $\CD^*(K,N)$ condition.  Such results have their roots in previous works of Bianchini-Cavalletti \cite{biacava:streconv}
 and the first author \cite{cava:MongeRCD, cava:decomposition},  and will be developed in Sections  \ref{S:dmonotone} and \ref{sec:ConditionalMeasures}.  The first key point is to understand the structure of $\sfd$-monotone sets, in particular we will prove that under the curvature condition one can decompose the space, up to a set of measure zero, in equivalence classes called rays where the $L^1$-transport is performed (see Theorem \ref{T:summary}). A second key point, which is the technical novelty of the present work with respect to the aforementioned papers \cite{biacava:streconv, cava:MongeRCD, cava:decomposition},  is that on almost every  ray the conditional measure satisfies a precise curvature inequality (see Theorem \ref{T:CDKN-1}).
 This last technical novelty is exactly the key to reduce the problem on the original m.m.s. to a  one dimensional problem.
 
 This reduction is performed in Section \ref{S:Local} where we adapt to the non-smooth framework methods of convex geometry developed in the aforementioned papers \cite{klartag,GrMi,LoSi,KaLoSi}. The main result of the section is Theorem \ref{T:localize} asserting that if $f$ is an $L^1$-function with null mean value on an $\RCD^*(K,N)$-space $(X,\sfd,\mm)$, then we can disintegrate the measure along $\sfd$-monotone rays on which the induced measure satisfies a curvature condition  and such that    the function along a.e. ray still has null mean value. 
 
 In the final Section \ref{S:Isop} we apply these techniques to prove the main theorems. The idea is to use  Theorem \ref{T:localize}  to reduce the study of isoperimetry for Borel subset of $X$, to the study of isoperimetry for Borel subsets of the real line endowed with a measure satisfying suitable curvature condition. A tricky point is that the measure on the real line is a priori non smooth, while the results of Milman \cite{Mil} regarding isoperimetric comparison for manifolds with density are stated for smooth densities. This point is fixed by a non-linear regularization process which permits to regularize the densities maintaining the convexity conditions equivalent to the lower  Ricci curvature bounds (see Lemma \ref{lem:approxh} and Theorem \ref{thm:I=Is}). 
 
The proof of the L\'evy-Gromov  inequality (and its generalization) will then consist in  combining  the  dimension reduction argument, the regularization process, 
and the Isoperimetric Comparison proved by Milman \cite{Mil}  for smooth manifolds with densities.
The (resp. almost) rigidity statement will follow by observing that if the space  has (resp. almost) 
minimal isoperimetric profile then it must have (resp. almost) maximal diameter, and so the Maximal Diameter Theorem proved by Ketterer \cite{Ket} 
(resp. combined with the compactness/stability properties of the class of $\RCD^*(K,N)$ spaces) will force the space to be (resp. almost) a spherical suspension.  
To obtain the complete characterization of isoperimetric regions we  will perform  a careful analysis of the disintegration of the space induced by an optimal set.
    
\subsection{Future developments}
In the present paper we decided to focus on the isoperimetric problem, due to its relevance in many fields of Mathematics. In the following \cite{CM2} we will employ the techniques developed in this paper to prove functional inequalities like spectral gap, Poincar\'e and log-Sobolev inequalities, the Payne-Weinberger/Yang-Zhong inequality, among others. Some of these inequalities are consequences of the four functions theorem of Kannan, Lov\'asz and Simonovits.

\section*{Acknowledgements}
The authors wish to thank Emanuel Milman for having drawn their attention to the recent paper by Klartag \cite{klartag} and  the reviewers, whose detailed comments led to an improvement of the manuscript. They also wish to thank the Hausdorff center of Mathematics of Bonn, where most of the work has been developed,  for the excellent working conditions and the stimulating atmosphere during the trimester program ``Optimal Transport'' in Spring 2015. \\The second author gratefully acknowledges the support of the ETH-fellowship.

\section{Prerequisites}

In what follows we say that a triple $(X,\sfd, \mm)$ is a metric measure space, m.m.s. for short, 
if $(X, \sfd)$ is a complete and separable metric space and $\mm$ is positive Radon measure over $X$. 
For this paper we will only be concerned with m.m.s. with $\mm$ probability measure, that is $\mm(X) =1$.
The space of all Borel probability measures over $X$ will be denoted by $\mathcal{P}(X)$.

A metric space is a geodesic space if and only if for each $x,y \in X$ 
there exists $\gamma \in \Geo(X)$ so that $\gamma_{0} =x, \gamma_{1} = y$, with
$$
\Geo(X) : = \{ \gamma \in C([0,1], X):  \sfd(\gamma_{s},\gamma_{t}) = |s-t| \sfd(\gamma_{0},\gamma_{1}), \text{ for every } s,t \in [0,1] \}.
$$
Recall that for complete geodesic spaces local compactness is equivalent to properness (a metric space is proper if every closed ball is compact).
We directly assume the ambient space $(X,\sfd)$ to be proper. Hence from now on we assume the following:
the ambient metric space $(X, \sfd)$ is geodesic, complete, separable and proper and $\mm(X) = 1$.

\medskip

We denote with $\mathcal{P}_{2}(X)$ the space of probability measures with finite second moment  endowed with the $L^{2}$-Wasserstein distance  $W_{2}$ defined as follows:  for $\mu_0,\mu_1 \in \mathcal{P}_{2}(X)$ we set
\begin{equation}\label{eq:Wdef}
  W_2^2(\mu_0,\mu_1) = \inf_{ \pi} \int_{X\times X} \sfd^2(x,y) \, \pi(dxdy),
\end{equation}
where the infimum is taken over all $\pi \in \mathcal{P}(X \times X)$ with $\mu_0$ and $\mu_1$ as the first and the second marginal.
Assuming the space $(X,\sfd)$ to be geodesic, also the space $(\mathcal{P}_2(X), W_2)$ is geodesic. 

Any geodesic $(\mu_t)_{t \in [0,1]}$ in $(\mathcal{P}_2(X), W_2)$  can be lifted to a measure $\nu \in {\mathcal {P}}(\Geo(X))$, 
so that $({\rm e}_t)_\sharp \, \nu = \mu_t$ for all $t \in [0,1]$. 
Here for any $t\in [0,1]$,  ${\rm e}_{t}$ denotes the evaluation map: 
$$
  {\rm e}_{t} : \Geo(X) \to X, \qquad {\rm e}_{t}(\gamma) : = \gamma_{t}.
$$

Given $\mu_{0},\mu_{1} \in \mathcal{P}_{2}(X)$, we denote by 
$\Opt(\mu_{0},\mu_{1})$ the space of all $\nu \in \mathcal{P}(\Geo(X))$ for which $({\rm e}_0,{\rm e}_1)_\sharp\, \nu$ 
realizes the minimum in \eqref{eq:Wdef}. If $(X,\sfd)$ is geodesic, then the set  $\Opt(\mu_{0},\mu_{1})$ is non-empty for any $\mu_0,\mu_1\in \mathcal{P}_2(X)$.
It is worth also introducing the subspace of $\mathcal{P}_{2}(X)$
formed by all those measures absolutely continuous with respect with $\mm$: it is denoted by $\mathcal{P}_{2}(X,\sfd,\mm)$.


\subsection{Geometry of metric measure spaces}\label{Ss:geom}
Here we briefly recall the synthetic notions of lower Ricci curvature bounds, for more detail we refer to  \cite{BS10,lottvillani:metric,sturm:I, sturm:II, Vil}.

%
%

In order to formulate the curvature properties for $(X,\sfd,\mm)$ we introduce the following distortion coefficients: given two numbers $K,N\in \erre$ with $N\geq0$, we set for $(t,\theta) \in[0,1] \times \erre_{+}$, 
\begin{equation}\label{E:sigma}
\sigma_{K,N}^{(t)}(\theta):= 
\begin{cases}
\infty, & \textrm{if}\ K\theta^{2} \geq N\pi^{2}, \crcr
\displaystyle  \frac{\sin(t\theta\sqrt{K/N})}{\sin(\theta\sqrt{K/N})} & \textrm{if}\ 0< K\theta^{2} <  N\pi^{2}, \crcr
t & \textrm{if}\ K \theta^{2}<0 \ \textrm{and}\ N=0, \ \textrm{or  if}\ K \theta^{2}=0,  \crcr
\displaystyle   \frac{\sinh(t\theta\sqrt{-K/N})}{\sinh(\theta\sqrt{-K/N})} & \textrm{if}\ K\theta^{2} \leq 0 \ \textrm{and}\ N>0.
\end{cases}
\end{equation}

We also set, for $N\geq 1, K \in \R$ and $(t,\theta) \in[0,1] \times \erre_{+}$
\begin{equation} \label{E:tau}
\tau_{K,N}^{(t)}(\theta): = t^{1/N} \sigma_{K,N-1}^{(t)}(\theta)^{(N-1)/N}.
\end{equation}

%
%
%
%
%
%
%
%
%

As we will consider only the case of essentially non-branching spaces, we recall the following definition. 
\begin{definition}\label{D:essnonbranch}
A metric measure space $(X,\sfd, \mm)$ is \emph{essentially non-branching} if and only if for any $\mu_{0},\mu_{1} \in \mathcal{P}_{2}(X)$,
with $\mu_{0}$ absolutely continuous with respect to $\mm$, any element of $\Opt(\mu_{0},\mu_{1})$ is concentrated on a set of non-branching geodesics.
\end{definition}

A set $F \subset \Geo(X)$ is a set of non-branching geodesics if and only if for any $\gamma^{1},\gamma^{2} \in F$, it holds:
$$
\exists \;  \bar t\in (0,1) \text{ such that } \ \forall t \in [0, \bar t\,] \quad  \gamma_{ t}^{1} = \gamma_{t}^{2}   
\quad 
\Longrightarrow 
\quad 
\gamma^{1}_{s} = \gamma^{2}_{s}, \quad \forall s \in [0,1].
$$

\begin{definition}[$\CD$ condition]\label{D:CD}
An essentially non-branching m.m.s. $(X,\sfd,\mm)$ verifies $\mathsf{CD}(K,N)$  if and only if for each pair 
$\mu_{0}, \mu_{1} \in \mathcal{P}_{2}(X,\sfd,\mm)$ there exists $\nu \in \Opt(\mu_{0},\mu_{1})$ such that
\begin{equation}\label{E:CD}
\r_{t}^{-1/N} (\gamma_{t}) \geq  \tau_{K,N}^{(1-t)}(\sfd( \gamma_{0}, \gamma_{1}))\r_{0}^{-1/N}(\gamma_{0}) 
 + \tau_{K,N}^{(t)}(\sfd(\gamma_{0},\gamma_{1}))\r_{1}^{-1/N}(\gamma_{1}), \qquad \nu\text{-a.e.} \, \gamma \in \Geo(X),
\end{equation}
for all $t \in [0,1]$, where $({\rm e}_{t})_\sharp \, \nu = \r_{t} \mm$.
\end{definition}

For the general definition of $\CD(K,N)$ see \cite{lottvillani:metric, sturm:I, sturm:II}. It is worth recalling that if $(M,g)$ is a Riemannian manifold of dimension $n$ and 
$h \in C^{2}(M)$ with $h > 0$, then the m.m.s. $(M,g,h \, vol)$ verifies $\CD(K,N)$ with $N\geq n$ if and only if  (see Theorem 1.7 of \cite{sturm:II})
$$
Ric_{g,h,N} \geq  K g, \qquad Ric_{g,h,N} : =  Ric_{g} - (N-n) \frac{\nabla_{g}^{2} h^{\frac{1}{N-n}}}{h^{\frac{1}{N-n}}}.  
$$
In particular if $N = n$ the generalized Ricci tensor $Ric_{g,h,N}= Ric_{g}$ makes sense only if $h$ is constant. In particular, if $I \subset \R$ is any interval, $h \in C^{2}(I)$ 
and $\mathcal{L}^{1}$ is the one-dimensional Lebesgue measure, the m.m.s. $(I ,|\cdot|, h \mathcal{L}^{1})$ verifies $\CD(K,N)$ if and only if  
\begin{equation}\label{E:CD-N-1}
\left(h^{\frac{1}{N-1}}\right)'' + \frac{K}{N-1}h^{\frac{1}{N-1}} \leq 0.
\end{equation}
%

We also mention the more recent Riemannian curvature dimension condition $\RCD^{*}$ introduced in the infinite dimensional case in \cite{AGS, AGS11b,AGMR12} and then  investigated by various authors in the finite dimensional refinement. A remarkable property is the equivalence of the $\RCD^{*}(K,N)$ condition and the  Bochner inequality: the infinite dimensional case was settled in \cite{AGS11b}, while the (technically more involved) finite dimensional refinement was established    in \cite{EKS, AMS}. We refer to these papers and references therein for a general account 
on the synthetic formulation of Ricci curvature lower bounds for metric measure spaces. 

Here we only mention that $\RCD^{*}(K,N)$ condition 
is an enforcement of the so called reduced curvature dimension condition, denoted by $\CD^{*}(K,N)$, that has been introduced in \cite{BS10}: 
in particular the additional condition is that the Sobolev space $W^{1,2}(X,\mm)$ is an Hilbert space, see \cite{AGS11a, AGS11b}.

The reduced $\CD^{*}(K,N)$ condition asks for the same inequality \eqref{E:CD} of $\CD(K,N)$ but  the
coefficients $\tau_{K,N}^{(t)}(\sfd(\gamma_{0},\gamma_{1}))$ and $\tau_{K,N}^{(1-t)}(\sfd(\gamma_{0},\gamma_{1}))$ 
are replaced by $\sigma_{K,N}^{(t)}(\sfd(\gamma_{0},\gamma_{1}))$ and $\sigma_{K,N}^{(1-t)}(\sfd(\gamma_{0},\gamma_{1}))$, respectively.

Hence while the distortion coefficients of the $\CD(K,N)$ condition 
are formally obtained imposing one direction with linear distortion and $N-1$ directions affected by curvature, 
the $\CD^{*}(K,N)$ condition imposes the same volume distortion in all the $N$ directions.

For both definitions there is a local version that is of some relevance for our analysis. Here we state only the local formulation $\mathsf{CD}(K,N)$, 
being clear what would be the one for $\mathsf{CD}^{*}(K,N)$.

\begin{definition}[$\CD_{loc}$ condition]\label{D:loc}
An essentially non-branching m.m.s. $(X,\sfd,\mm)$ satisfies $\CD_{loc}(K,N)$ if for any point $x \in X$ there exists a neighborhood $X(x)$ of $x$ such that for each pair 
$\mu_{0}, \mu_{1} \in \mathcal{P}_{2}(X,\sfd,\mm)$ supported in $X(x)$
there exists $\nu \in \Opt(\mu_{0},\mu_{1})$ such that \eqref{E:CD} holds true for all $t \in [0,1]$.
The support of $({\rm e}_{t})_\sharp \, \nu$ is not necessarily contained in the neighborhood $X(x)$.
\end{definition}

One of the main properties of the reduced curvature dimension condition is the globalization one:  
under the non-branching property,  $\mathsf{CD}^{*}_{loc}(K,N)$ and $\mathsf{CD}^{*}(K,N)$ are equivalent (see \cite[Corollary 5.4]{BS10}), i.e. the 
$\mathsf{CD}^{*}$-condition verifies the local-to-global property.

We also recall a few relations between $\CD$ and $\CD^{*}$.
It is known by \cite[Theorem 2.7]{GigliMap} that, if $(X,\sfd,\mm)$ is a non-branching metric measure space 
verifying $\CD(K,N)$ and $\mu_{0}, \mu_{1} \in \mathcal{P}(X)$ with $\mu_{0}$ absolutely continuous with respect to $\mm$, 
then there exists a unique optimal map $T : X \to X$ such $(id, T)_\sharp\, \mu_{0}$ realizes the minimum in \eqref{eq:Wdef} and the set 
$\Opt(\mu_{0},\mu_{1})$ contains only one element. The same result holds if one replaces the non-branching assumption with the more general 
one of essentially non-branching (see \cite{CM3}); the same comment applies also to the previous equivalence between the local and the global version of $\CD^{*}(K,N)$.

\begin{remark}[$\CD^*(K,N)$ Vs $\CD_{loc}(K,N)$]\label{rk:CDCDs}
From \cite{BS10, CM3} we deduce the following chain of implications: if $(X,\sfd, \mm)$ is a proper, essentially non-branching, metric measure space, 
then 
$$
\CD_{loc}(K-,N) \iff \CD^{*}_{loc}(K-,N) \iff \CD^{*}(K,N), 
$$
provided $K,N \in \R$ with $N > 1$ or $N=1$ and $K \geq 0$. Let us remark  that on the other hand $\CD^*(K,1)$ does not imply $\CD_{loc}(K,1)$ for $K<0$: indeed it is possible to check that $(X,\sfd,\mm)=([0,1], |\cdot|, c\, \sinh(\cdot) \L^1)$ satisfies $\CD^*(-1,1)$ but not $\CD_{loc}(-1,1)$ which would require the density to be constant.
For a deeper analysis on the interplay between $\CD^{*}$ and $\CD$ we refer to \cite{BS10,GRS2013}.
\end{remark}

\subsection{Measured Gromov-Hausdorff convergence and stability of $\RCD^*(K,N)$}\label{SS:mGHConv}

Let us first recall the notion of measured Gromov-Hausdorff convergence, mGH for short. Since in this work we will apply it to compact  m.m. spaces endowed with  probability measures having full support, we will restrict to this framework for simplicity (for a more general treatment see for instance \cite{GMS2013}). We denote $\bar{\N}:=\N\cup\{\infty\}$.
   
 \begin{definition}  
 A sequence $(X_j,\sfd_j,\mm_j)$ of compact m.m. spaces with $\mm_j(X_j)=1$ and $\supp(\mm_j)=X_j$ is said to converge 
in the  measured Gromov-Hausdorff topology (mGH for short) to a compact m.m. space 
$(X_\infty,\sfd_\infty,\mm_\infty)$ with $\mm_\infty(X)=1$ and $\supp(\mm_\infty)=X_\infty$ if and only if there 
exists a separable metric space $(Z,\sfd_Z)$ and isometric embeddings  
$\{\iota_j:(X,\sfd_j)\to (Z,\sfd_Z)\}_{j \in \bar{\N}}$ with the following property:
for every 
$\varepsilon>0$  there exists $j_0$ such that for every $j>j_0$
\[
\iota_\infty(X_\infty) \subset B^Z_{\varepsilon}[\iota_j (X_j)]  \qquad \text{and} \qquad  \iota_j(X_j) \subset B^Z_{\varepsilon}[\iota_\infty(X_\infty)], 
\]
where $B^Z_\varepsilon[A]:=\{z \in Z: \, \sfd_Z(z,A)<\varepsilon\}$ for every subset $A \subset Z$, and 
\[
\int_Z \varphi \,  (\iota_j)_\sharp(\mm_j) \qquad    \to \qquad  \int_Z \varphi \,   (\iota_\infty)_\sharp(\mm_\infty) \qquad \forall \varphi \in C_b(Z), 
\]
where $C_b(Z)$ denotes the set of real valued bounded continuous functions with bounded support in $Z$.
 \end{definition}
 
The following theorem summarizes the compactness/stability properties we will use  in the proof of the almost rigidity result (notice these hold more generally for every $K\in \R$ by replacing mGH with  \emph{pointed}-mGH convergence).
\begin{theorem}[Metrizability and Compactness]\label{thm:CompRCD}
Let $K>0, N>1$ be fixed.  Then the mGH convergence restricted to  (isomorphism classes of)  $\RCD^*(K,N)$ spaces is metrizable by a distance function $\sfd_{mGH}$. Furthermore every sequence $(X_j,\sfd_j, \mm_j)$ of $\RCD^*(K,N)$ spaces admits a subsequence which  mGH-converges  to a limit $\RCD^*(K,N)$ space.
\end{theorem}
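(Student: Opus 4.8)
The plan is to establish Theorem \ref{thm:CompRCD} in two parts: first the compactness of the class of $\RCD^*(K,N)$ spaces under mGH convergence (with the stability of the $\RCD^*$ condition under passage to the limit), and second the metrizability of the restricted convergence. Both are by now fairly standard in the literature; I would not attempt to reprove them from scratch but rather assemble them from known ingredients, with precise pointers.

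\textbf{Step 1 (precompactness and stability).} The $\CD^*(K,N)$ condition with $K>0$ forces, via the generalized Bonnet--Myers theorem, a uniform diameter bound $\diam(X)\le \pi\sqrt{N/K}$, and the Bishop--Gromov volume comparison gives a uniform doubling constant depending only on $K$ and $N$. A family of compact metric spaces with a uniform diameter bound and a uniform doubling constant is totally bounded in the Gromov--Hausdorff topology; promoting this to the measured setting (measures being probabilities with full support) one obtains precompactness in the mGH topology. The key nontrivial input is \emph{stability}: if $(X_j,\sfd_j,\mm_j)$ are $\RCD^*(K,N)$ and converge in mGH to $(X_\infty,\sfd_\infty,\mm_\infty)$, then the limit is again $\RCD^*(K,N)$. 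This is precisely the content of the stability theorems in \cite{AGMR12, GMS2013} (for the $\CD^*(K,N)$ part, which is stable under mGH by the optimal-transport formulation and lower semicontinuity of the entropy functionals along Wasserstein geodesics) together with the stability of the infinitesimally Hilbertian condition, i.e. the convergence of Cheeger energies under mGH, again from \cite{GMS2013, AGS11b}. Combining precompactness with stability yields the compactness statement.

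\textbf{Step 2 (metrizability).} The mGH convergence, restricted to isomorphism classes of compact normalized m.m.s.\ with full support, is induced by a distance: one can use the Gromov--Hausdorff--Prokhorov type distance, or the pointed-measured-Gromov distance of Gigli--Mondino--Savar\'e \cite{GMS2013}, which on the class of compact spaces with a uniform diameter bound metrizes mGH convergence. Restricting this distance $\sfd_{mGH}$ to $\RCD^*(K,N)$ spaces (a closed subclass of the ambient class, by Step 1's stability) gives the desired metric.

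\textbf{Main obstacle.} The genuinely substantial point, and the one I would be most careful to cite rather than sketch, is the stability of the full $\RCD^*(K,N)$ condition --- in particular that the Hilbertianity of $W^{1,2}$ passes to mGH limits, which requires the Mosco-type convergence of Cheeger energies established in \cite{GMS2013, AGS11b}; the purely metric precompactness via Bishop--Gromov doubling and the abstract metrizability of mGH are comparatively routine. Since the present paper uses this theorem only as a black box in the proof of Corollary \ref{cor:AlmRig}, the appropriate level of detail is to record the statement with references to \cite{AGMR12, GMS2013}, rather than to carry out the stability argument here.
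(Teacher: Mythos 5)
Your proposal is correct and follows essentially the same route as the paper, which likewise proves compactness via the uniform diameter bound (Bonnet--Myers for $K>0$) and uniform doubling (Gromov's argument), cites the literature (e.g.\ \cite{BS10,EKS,GMS2013}) for stability of the $\RCD^*(K,N)$ condition under mGH convergence, and cites \cite{GMS2013} for metrizability. The only cosmetic difference is the precise set of references and the exact constant in the diameter bound, neither of which affects the argument.
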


The compactness follows by the standard argument of Gromov, indeed for fixed $K>0,N>1$, the spaces have uniformly bounded diameter,  moreover  the measures of $\RCD^*(K,N)$  spaces  are uniformly doubling, hence the spaces  are uniformly totally bounded and thus compact in the GH-topology; the weak compactness of the measures follows using the doubling condition again and the fact that they are normalized. For the stability of the $\RCD^*(K,N)$ condition under mGH convergence see for instance   \cite{BS10,EKS,GMS2013}. The metrizability of mGH-convergence restricted to a class of  uniformly doubling  normalized m.m. spaces having uniform diameter bounds is also well known, see for instance \cite{GMS2013}.

\subsection{Warped product} 

Given two geodesic m.m.s. $(B,\sfd_{B}, \mm_{B})$ and $(F,\sfd_{F},\mm_{F})$ and a Lipschitz function $f : B \to \R_{+}$ one can define a 
length function on the product $B \times F$: for any absolutely continuous $\gamma : [0,1] \to B \times F$ with $\gamma = (\alpha, \beta)$, 
define 
$$
L(\gamma) : = \int_{0}^{1} \left(  |\dot \alpha|^{2}(t) + (f\circ \alpha)^{2}(t) |\dot \beta|^{2}(t) \right)^{1/2} dt
$$
and define accordingly the pseudo-distance 
$$
|(p,x),(q,y)| : = \inf \left\{ L(\gamma) \colon \gamma_{0} = (p,x), \ \gamma_{1} = (q,y) \right\}.
$$
Then the warped product of $B$ with $F$  is defined as 
$$
B \times_{f} F : = \left( \bigslant{B\times F}{ \sim}, |\cdot, \cdot | \right),
$$
where $(p,x) \sim (q,y)$ if and only if $|(p,x), (q,y)| = 0$. One can also associate a measure and obtain the following object
$$
B\times^{N}_{f} F : = (B \times_{f} F, \mm_{C}), \qquad \mm_{C} : = f^{N} \mm_{B} \otimes \mm_{F}. 
$$
Then  $B\times^{N}_{f} F$ will be a metric measure space called measured warped product. For a general picture on the curvature properties of warped products, 
we refer to \cite{Ket}.

\subsection{Isoperimetric profile}

Given a m.m.s. $(X,\sfd,\mm)$ as above and  a Borel subset $A\subset X$, let $A^{\ve}$ denote the $\ve$-tubular neighborhood 
$$
A^{\ve}:=\{x \in X \,:\, \exists y \in A \text{ such that } \sfd(x,y) < \ve \}. 
$$
The Minkowski (exterior) boundary measure $\mm^+(A)$  is defined by
\begin{equation}\label{eq:MinkCont}
\mm^+(A):=\liminf_{\ve\downarrow 0} \frac{\mm(A^\ve)-\mm(A)}{\ve}.
\end{equation}
The \emph{isoperimetric profile}, denoted by  ${\cI}_{(X,\sfd,\mm)}$, is defined as the point-wise maximal function so that $\mm^+(A)\geq \cI_{(X,\sfd,\mm)}(\mm(A))$  
for every Borel set $A \subset X$, that is
\begin{equation}\label{E:profile}
\cI_{(X,\sfd,\mm)}(v) : = \inf \big\{ \mm^{+}(A) \colon A \subset X \, \textrm{ Borel}, \, \mm(A) = v   \big\}.
\end{equation}

\subsection{The model Isoperimetric profile function $\cI_{K,N,D}$}\label{SS:IKND}
If $K>0$ and $N\in \N$, by the L\'evy-Gromov isoperimetric inequality \eqref{eq:LevyGromov} we know that, for $N$-dimensional smooth manifolds having Ricci $\geq K$, the isoperimetric profile function is bounded below by the one of the $N$-dimensional round sphere of the suitable radius. In other words  the \emph{model} isoperimetric profile function is the one of ${\mathbb S}^N$. For $N\geq 1, K\in \R$ arbitrary real numbers the situation is  more complicated, and just recently E. Milman \cite{Mil} discovered what is the model isoperimetric profile. In this short section we recall its definition.
\\

Given $\delta>0$, set 
\[
\begin{array}{ccc}
 s_\delta(t) := \begin{cases}
\sin(\sqrt{\delta} t)/\sqrt{\delta} & \delta > 0 \\
t & \delta = 0 \\
\sinh(\sqrt{-\delta} t)/\sqrt{-\delta} & \delta < 0
\end{cases}

& , &

 c_\delta(t) := \begin{cases}
\cos(\sqrt{\delta} t) & \delta > 0 \\
1 & \delta = 0 \\
\cosh(\sqrt{-\delta} t) & \delta < 0
\end{cases}
\end{array} ~.
\]
Given a continuous function $f :\R \to  \R$ with $f(0) \geq 0$, we denote by $f_+ : \R \to \R^+ $ the function coinciding with $f$ between its first non-positive and first positive roots, and vanishing everywhere else, i.e. $f_+ := f \chi_{[\xi_{-},\xi_{+}]}$ with $\xi_{-} = \sup\{\xi \leq 0; f(\xi) = 0\}$ and $\xi_{+} = \inf\{\xi > 0; f(\xi) = 0\}$.

Given $H,K \in \R$ and $N \in [1,\infty)$, set $\delta := K / (N-1)$  and define the following (Jacobian) function of $t \in \R$:
\[
J_{H,K,N}(t) :=
\begin{cases}
\chi_{\{t=0\}}  & N = 1 , K > 0 \\
\chi_{\{H t \geq 0}\} & N = 1 , K \leq 0 \\
\left(c_\delta(t) + \frac{H}{N-1} s_\delta(t)\right)_+^{N-1} & N \in (1,\infty) \\
\end{cases} ~.
\]
As last piece of notation, given a non-negative integrable function $f$ on a closed interval $L \subset \R$, we denote with $\mu_{f,L}$  
the probability measure supported in $L$ with density (with respect to the Lebesgue measure) proportional to $f$ there. In order to simplify a bit the notation we will write
$\cI_{(L,f)}$ in place of $\cI_{(L,\, |\cdot|, \mu_{f,L})}$.
\\The model isoperimetric profile for spaces having Ricci $\geq K$, for some $K\in \R$, dimension bounded above by $N\geq 1$ and diameter at most $D\in (0,\infty]$ is then defined by
\begin{equation}\label{eq:defIKND}
\cI_{K,N,D}(v):=\inf_{H\in \R,a\in [0,D]} \cI_{\left([-a,D-a], J_{H,K,N}\right)} (v), \quad \forall v \in [0,1]. 
\end{equation}
The formula above has the advantage of considering all the possible cases in just one equation, 
but  probably it is  also instructive to  isolate the different cases in a more explicit way. Indeed one can check \cite[Section 4] {Mil} that:
\begin{itemize}

\item \textbf{Case 1}: $K>0$ and $D<\sqrt{\frac{N-1}{K}} \pi$,
$$
\cI_{K,N,D}(v) =   \inf_{\xi \in \big[0, \sqrt{\frac{N-1}{K}} \pi -D\big]} \cI_{\big( [\xi,\xi +D],  \sin( \sqrt{\frac{K}{N-1}} t)^{N-1} \big)}(v), \quad \forall v \in [0,1] ~. 
$$

\item \textbf{Case 2}:   $K > 0$ and $D \geq \sqrt{\frac{N-1}{K}} \pi$, 
$$
\cI_{K,N,D}(v) = \cI_{ \big( [0, \sqrt{\frac{N-1}{K}} \pi],  \sin( \sqrt{\frac{K}{N-1}} t)^{N-1}   \big)}(v), \quad \forall v \in [0,1] ~. 
$$

\item  \textbf{Case 3}: $K=0$ and $D<\infty$,
\begin{eqnarray*}
\cI_{K,N,D}(v)&=&  \min \left \{ \begin{array}{l}  \inf_{\xi \geq 0} \cI_{([\xi,\xi+D],  t^{N-1})}(v) ~,\\
\phantom{\inf_{\xi \in \R}} \cI_{([0,D],1)}(v)
\end{array}
\right \} \\
&=& \frac{N}{D} \inf_{\xi \geq 0}  \frac{\left(\min(v,1-v) (\xi+1)^{N} + \max(v,1-v) \xi^{N}\right)^{\frac{N-1}{N}}}{(\xi+1)^{N} - \xi^{N}},  \quad \forall v \in [0,1] ~. 
\end{eqnarray*}

\item \textbf{Case 4}:  $K < 0$, $D<\infty$:
$$
\cI_{K,N,D}(v)=  \min \left \{ \begin{array}{l}
\inf_{\xi \geq 0} \cI_{\big([\xi,\xi+D], \; \sinh(\sqrt{\frac{-K}{N-1}} t)^{N-1}\big)}(v) ~ , \\
\phantom{\inf_{\xi \in \R}} \cI_{\big([0,D],   \exp(\sqrt{-K (N-1)} t) \big)} (v) ~, \\
\inf_{\xi \in \R} \cI_{\big([\xi,\xi+D], \; \cosh(\sqrt{\frac{-K}{N-1}} t)^{N-1}\big)} (v)
\end{array}
\right \} \quad \forall v \in [0,1]  ~.
$$
\item In all the remaining cases,  that is for $K\leq 0, D=\infty$,  the model profile trivializes: $\cI_{K,N.D}(v)=0$ for every $v \in [0,1].$
\end{itemize}
\medskip
Note that when $N$ is an integer, $$\cI_{\big( [0,  \sqrt{\frac{N-1}{K}} \pi ], ( \sin(\sqrt{\frac{K}{N-1} } t)^{N-1}\big)} = \cI_{({\mathbb S}^{N}, g^K_{can}, \mu^K_{can})}$$
 by the isoperimetric inequality on the sphere, and so Case 2 with $N$ integer corresponds to L\'evy-Gromov isoperimetric inequality.

\subsection{Disintegration of measures}
We include here a version of the Disintegration Theorem (for a comprehensive treatment see for instance  \cite{Fre:measuretheory4}).

Given a measurable space $(R, \mathscr{R})$, i.e.  $\mathscr{R}$ is  a $\sigma$-algebra of subsets of $R$, 
and a function $\QQ : R \to Q$, with $Q$ general set, we can endow $Q$ with the \emph{push forward $\sigma$-algebra} $\mathcal{Q}$ of $\mathscr{R}$:
$$
C \in \mathcal{Q} \quad \Longleftrightarrow \quad \QQ^{-1}(C) \in \mathscr{R},
$$
which could be also defined as the biggest $\sigma$-algebra on $Q$ such that $\QQ$ is measurable. 
Moreover given a probability measure  $\rho$  on $(R,\mathscr{R})$, define a probability  
measure $\qq$ on $(Q,\mathcal{Q})$  by push forward via $\QQ$, i.e. $\qq := \QQ_\sharp \, \rho$.  

\begin{definition}
\label{defi:dis}
A \emph{disintegration} of $\rho$ \emph{consistent with} $\QQ$ is a map 
(with slight abuse of notation still denoted by) $\rho: \mathscr{R} \times Q \to [0,1]$ such that, setting $\rho_q(B):=\rho(B,q)$, the following hold:
\begin{enumerate}
\item  $\rho_{q}(\cdot)$ is a probability measure on $(R,\mathscr{R})$ for all $q \in Q$,
\item  $\rho_{\cdot}(B)$ is $\qq$-measurable for all $B \in \mathscr{R}$,
\end{enumerate}
and satisfies for all $B \in \mathscr{R}, C \in \mathcal{Q}$ the consistency condition
$$
\rho \left(B \cap \QQ^{-1}(C) \right) = \int_{C} \rho_{q}(B)\, \qq(dq).
$$
A disintegration is \emph{strongly consistent with respect to $\QQ$} if for all $q$ we have $\rho_{q}(\QQ^{-1}(q))=1$.
The measures $\rho_q$ are called \emph{conditional probabilities}.
\end{definition}

We recall the following version of the disintegration theorem that can be found in \cite[Section 452]{Fre:measuretheory4}.
Recall that a $\sigma$-algebra $\mathcal{J}$ is \emph{countably generated} if there exists a countable family of sets so that 
$\mathcal{J}$ coincide with the smallest $\sigma$-algebra containing them.

\begin{theorem}[Disintegration of measures]
\label{T:disintr}
Assume that $(R,\mathscr{R},\rho)$ is a countably generated probability space and  $R = \cup_{q \in Q}R_{q}$ is a partition of R. 
Denote with $\QQ: R \to Q$ the quotient map: 
$$
 q = \QQ(x) \iff x \in R_{q},
$$
and with $\left( Q, \mathcal{Q},\qq \right)$ the quotient measure space. 
Assume $(Q,\mathcal{Q})=(X,\mathcal{B}(X))$ with $X$ Polish space, where $\mathcal{B}(X)$ denotes the Borel $\sigma$-algebra.
Then there exists a unique strongly consistent disintegration $q \mapsto \rho_{q}$ w.r.t. $\QQ$, where uniqueness is understood in the following sense:
if $\rho_{1}, \rho_{2}$ are two consistent disintegrations then $\rho_{1,q}(\cdot)=\rho_{2,q}(\cdot)$ for $\qq$-a.e. $q \in Q$.
\end{theorem}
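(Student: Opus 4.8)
\textbf{Proof plan for the Disintegration Theorem (Theorem~\ref{T:disintr}).}
The plan is to reduce the partition setting to the quotient-map setting handled in \cite[Section 452]{Fre:measuretheory4} and then invoke the abstract existence-and-uniqueness statement recalled above as Definition~\ref{defi:dis}. First I would observe that the partition $R=\cup_{q\in Q}R_q$ together with the quotient map $\QQ:R\to Q$, $\QQ(x)=q\iff x\in R_q$, equips $Q$ with the push-forward $\sigma$-algebra $\mathcal{Q}$ and the push-forward probability $\qq=\QQ_\sharp\rho$, exactly the data of a quotient measure space. The hypothesis $(Q,\mathcal{Q})=(X,\mathcal B(X))$ with $X$ Polish is what makes the abstract machinery applicable: a Polish space with its Borel $\sigma$-algebra is a standard Borel space, hence in particular its $\sigma$-algebra is countably generated and separates points, and $\rho$ is a countably generated (indeed, by assumption) probability space.

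The key steps, in order, are: (1) verify the measurability and countable-generation hypotheses so that \cite[452]{Fre:measuretheory4} applies --- namely that $(R,\mathscr R,\rho)$ is a countably generated probability space and that the target $(Q,\mathcal Q)$ is standard Borel, which holds since $X$ is Polish; (2) apply the existence part of Freudenthal's (Fremlin's) disintegration theorem to produce a family $\{\rho_q\}_{q\in Q}$ of probability measures on $(R,\mathscr R)$ with $\rho_\cdot(B)$ $\qq$-measurable for every $B\in\mathscr R$ and satisfying the consistency identity $\rho(B\cap\QQ^{-1}(C))=\int_C\rho_q(B)\,\qq(dq)$; (3) upgrade ordinary consistency to \emph{strong} consistency, i.e. $\rho_q(\QQ^{-1}(q))=\rho_q(R_q)=1$ for $\qq$-a.e.\ $q$: this is where the standard-Borel structure of $Q$ is essential, because it lets one use a countable generating family $\{C_n\}$ of $\mathcal Q$ separating points of $Q$, apply the consistency identity to each $C_n$ and its complement, and conclude that $\rho_q$ is concentrated on $\cap_n(C_n\ \triangle\text{-adjusted to }q)=\{q\}$'s fiber for $\qq$-a.e.\ $q$; after discarding a $\qq$-null set and redefining $\rho_q$ there arbitrarily (say as a fixed Dirac-type measure on the corresponding fiber, or simply on a fixed point) one gets strong consistency for \emph{every} $q$; (4) prove uniqueness in the stated a.e.\ sense: if $\rho^{(1)}$ and $\rho^{(2)}$ are two consistent disintegrations, then for each $B$ in a countable generating algebra of $\mathscr R$ the consistency identity forces $\rho^{(1)}_q(B)=\rho^{(2)}_q(B)$ for $\qq$-a.e.\ $q$; taking the countable union of the exceptional sets and using a monotone-class/$\pi$-$\lambda$ argument gives $\rho^{(1)}_q=\rho^{(2)}_q$ as measures for $\qq$-a.e.\ $q$.

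The main obstacle is step (3), the passage from consistency to \emph{strong} consistency, together with the attendant measurability bookkeeping: an abstract disintegration need not be carried by the fibers unless the quotient is measure-theoretically well-behaved, and one must genuinely exploit that $X$ is Polish (equivalently, that $(Q,\mathcal Q)$ is standard Borel and hence $\mathscr R$, pulled back, is countably separated over $Q$) to localize $\rho_q$ on $R_q=\QQ^{-1}(q)$. Everything else --- the existence in step (2) and the uniqueness in step (4) --- is a direct citation of \cite[Section 452]{Fre:measuretheory4} combined with a routine $\pi$-$\lambda$ argument on a countable generating family. Since the statement is explicitly quoted from Fremlin, the cleanest exposition is simply to record the reduction in steps (1)--(2), note that \cite[452]{Fre:measuretheory4} already delivers strong consistency under the standard-Borel hypothesis on the quotient, and give the short uniqueness argument of step (4).
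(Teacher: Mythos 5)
Your proposal is correct and matches the paper's treatment: the paper gives no proof of this statement but simply recalls it as a known result from \cite[Section 452]{Fre:measuretheory4}, exactly the reduction-plus-citation you describe, and your sketches of the strong-consistency upgrade (via a countable point-separating generating family of $\mathcal{B}(X)$) and of the a.e.\ uniqueness (via a $\pi$-$\lambda$ argument on a countable generating algebra) are the standard correct arguments. Nothing further is needed.
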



\section{$\sfd$-monotone sets}\label{S:dmonotone}

Let $\f : X \to \R$ be any $1$-Lipschitz function. Here  we present some useful results concerning the $\sfd$-cyclically monotone set associated with $\f$:
\begin{equation}\label{E:Gamma}
\Gamma : = \{ (x,y) \in X\times X : \f(x) - \f(y) = \sfd(x,y) \},
\end{equation}
that can be interpret as the set of couples moved by $\f$ with maximal slope.
Recall that a set $\Lambda \subset X \times X$ is said to be $\sfd$-cyclically monotone if for any finite set of points $(x_{1},y_{1}),\dots,(x_{N},y_{N})$ it holds
$$
\sum_{i = 1}^{N} \sfd(x_{i},y_{i}) \leq \sum_{i = 1}^{N} \sfd(x_{i},y_{i+1}),
$$
with the convention that $y_{N+1} = y_{1}$. \medskip

The following lemma is a consequence of the $\sfd$-cyclically monotone structure of $\Gamma$.


\begin{lemma}\label{L:cicli}
Let $(x,y) \in X\times X$ be an element of $\Gamma$. Let $\gamma \in \Geo(X)$ be such that $\gamma_{0} = x$ 
and $\gamma_{1}=y$. Then
$$
(\gamma_{s},\gamma_{t}) \in \Gamma, 
$$
for all $0\leq s \leq t \leq 1$.
\end{lemma}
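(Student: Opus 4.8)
The plan is to exploit the $\sfd$-cyclically monotone structure of $\Gamma$ together with the fact that $\gamma$ realizes the distance between its endpoints along a geodesic. First I would record the key consequence of $(x,y)\in\Gamma$: for a $1$-Lipschitz $\f$, the identity $\f(x)-\f(y)=\sfd(x,y)$ forces $\f$ to decrease with maximal (unit) slope along any geodesic connecting $x$ to $y$. Concretely, for $0\le s\le t\le 1$ write
\[
\sfd(x,y)=\f(\gamma_0)-\f(\gamma_1)=\bigl(\f(\gamma_0)-\f(\gamma_s)\bigr)+\bigl(\f(\gamma_s)-\f(\gamma_t)\bigr)+\bigl(\f(\gamma_t)-\f(\gamma_1)\bigr).
\]

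Next I would bound each of the three terms on the right using the $1$-Lipschitz property of $\f$ and the geodesic property $\sfd(\gamma_a,\gamma_b)=|a-b|\,\sfd(x,y)$: each difference $\f(\gamma_a)-\f(\gamma_b)$ is at most $\sfd(\gamma_a,\gamma_b)=|a-b|\,\sfd(x,y)$. Summing these bounds for the consecutive pairs $(0,s),(s,t),(t,1)$ gives an upper bound of $(s + (t-s) + (1-t))\,\sfd(x,y)=\sfd(x,y)$. Since the left-hand side equals $\sfd(x,y)$ exactly, every inequality in the chain must be an equality; in particular $\f(\gamma_s)-\f(\gamma_t)=\sfd(\gamma_s,\gamma_t)$, which is precisely the statement that $(\gamma_s,\gamma_t)\in\Gamma$. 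One should also dispose of the degenerate case $x=y$ trivially (then $\gamma$ is constant and the claim is immediate), so one may assume $\sfd(x,y)>0$.

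The argument is essentially a ``telescoping with saturated triangle inequalities'' computation, so the only point requiring a little care — and the step I expect to be the mild obstacle — is making sure the middle term is controlled the right way: one needs $\f(\gamma_s)-\f(\gamma_t)\le\sfd(\gamma_s,\gamma_t)$ from $1$-Lipschitzness, and then the forced equality uses that the two outer terms are \emph{at most} their respective geodesic distances while their sum with the middle term is \emph{exactly} $\sfd(x,y)$; this pins down all three. Note this does not even require the $\sfd$-cyclical monotonicity of $\Gamma$ in full strength, only the $1$-Lipschitz bound on $\f$ — which is consistent with the lemma being phrased as ``a consequence of the $\sfd$-cyclically monotone structure,'' since $\Gamma$ is built from such a $\f$. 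Finally I would remark that the same reasoning shows $\f$ is affine along $\gamma$ with slope $-\sfd(x,y)$, a fact that will likely be reused later, though it is not needed for the stated conclusion.
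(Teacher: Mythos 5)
Your argument is correct and is essentially the standard proof that the paper refers to (Lemma 3.1 of the cited work of Cavalletti): telescoping $\f(\gamma_0)-\f(\gamma_1)$ over $0\leq s\leq t\leq 1$, bounding each increment by the $1$-Lipschitz property and the geodesic identity $\sfd(\gamma_a,\gamma_b)=|a-b|\,\sfd(x,y)$, and forcing equality in every term, in particular $\f(\gamma_s)-\f(\gamma_t)=\sfd(\gamma_s,\gamma_t)$. Your observation that only the definition of $\Gamma$ via the $1$-Lipschitz function $\f$ is used (not cyclical monotonicity in full strength) is also accurate.
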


For its proof see Lemma 3.1 of \cite{cava:MongeRCD}. It is natural then to consider the set of geodesics $G \subset \Geo(X)$ such that 
$$
\gamma \in G \iff \{ (\gamma_{s},\gamma_{t}) : 0\leq s \leq t \leq 1  \} \subset \Gamma,
$$
that is $G : = \{ \gamma \in \Geo(X) : (\gamma_{0},\gamma_{1}) \in \Gamma \}$.  \\
We now recall some definitions, already given in \cite{biacava:streconv},  that will be needed to describe the structure of $\Gamma$.

\begin{definition}\label{D:transport}
We define the set of \emph{transport rays} by 
$$
R := \Gamma \cup \Gamma^{-1},
$$
where $\Gamma^{-1}:= \{ (x,y) \in X \times X : (y,x) \in \Gamma \}$. 
The set of \emph{initial points} and \emph{final points} are defined   respectively by  
\begin{align*}
{\mathfrak a} :=& \{ z \in X: \nexists \, x \in X, (x,z) \in \Gamma, \sfd(x,z) > 0  \}, \crcr
{\mathfrak b} :=& \{ z \in X: \nexists \, x \in X, (z,x) \in \Gamma, \sfd(x,z) > 0 \}.
\end{align*}
The set of \emph{end points} is ${\mathfrak a} \cup {\mathfrak b}$. 
We define the \emph{transport set with end points}: 
$$
\mathcal{T}_{e} = P_{1}(\Gamma \setminus \{ x = y \}) \cup 
P_{1}(\Gamma^{-1}\setminus \{ x=y \}).
$$
where $\{ x = y\}$ stands for $\{ (x,y) \in X^{2} : \sfd(x,y) = 0 \}$.
\end{definition}

\begin{remark}\label{R:regularity}
Here we discuss the measurability of the sets introduced in Definition \ref{D:transport}.
Since $\f$ is $1$-Lipschitz, $\Gamma$ is closed and therefore $\Gamma^{-1}$ and $R$ are closed as well.
Moreover by assumption the space is proper, hence the sets $\Gamma, \Gamma^{-1}, R$ are $\sigma$-compact (countable union of compact sets).

Then we look at the set of initial and final points:
$$
{\mathfrak a} = P_{2} \left( \Gamma \cap \{ (x,z) \in X\times X : \sfd(x,z) > 0 \} \right)^{c}, \qquad 
{\mathfrak b} = P_{1} \left( \Gamma \cap \{ (x,z) \in X\times X : \sfd(x,z) > 0 \} \right)^{c}.
$$
Since $\{ (x,z) \in X\times X : \sfd(x,z) > 0 \} = \cup_{n} \{ (x,z) \in X\times X : \sfd(x,z) \geq 1/n \}$, it follows that  
it follows that both ${\mathfrak a}$ and ${\mathfrak b}$ are the complement of $\sigma$-compact sets. 
Hence ${\mathfrak a}$ and ${\mathfrak b}$ are Borel sets. Reasoning as before, it follows that $\T_{e}$ is a $\sigma$-compact set.
\end{remark}

It can be proved that the set of transport rays $R$ induces an equivalence relation on a subset of $\mathcal{T}_{e}$. It is sufficient to remove from $\mathcal{T}_{e}$
the branching points of geodesics and then show that they all have $\mm$-measure zero. This will be indeed the case using the curvature 
properties of the space.

To this aim, set $\Gamma(x):=P_2(\Gamma\cap(\{x\}\times X))$,  $\Gamma(x)^{-1}:=P_2(\Gamma^{-1}\cap(\{x\} \times X))$, consider the sets of forward (respectively backward) branching 
\begin{align*}
	A_{+}: = 	&~\{ x \in \mathcal{T}_{e} : \exists z,w \in \Gamma(x), (z,w) \notin R \}, \\
	A_{-}	: = 	&~\{ x \in \mathcal{T}_{e} : \exists z,w \in \Gamma(x)^{-1}, (z,w) \notin R \},
\end{align*}
and define the \emph{transport set} $\mathcal{T} : = \mathcal{T}_{e} \setminus (A_{+} \cup A_{-})$. Then one can prove the following

\begin{theorem}\label{T:equivalence}
Let $(X,\sfd,\mm)$ satisfy $\CD^{*}(K,N)$ and be essentially non-branching with $1 \leq N < \infty$.
Then the set of transport rays $R\subset X \times X$ is an equivalence relation on the transport set $\mathcal{T}$ 
and 
$$
\mm(\mathcal{T}_{e} \setminus \mathcal{T} ) = 0.
$$
Moreover  the transport set $\mathcal{T}$ is $\sigma$-compact set. 
\end{theorem}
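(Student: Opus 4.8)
The plan is to prove Theorem \ref{T:equivalence} in two conceptually separate moves: first establish that $R$ restricted to $\T$ is a genuine equivalence relation, and second show the ``error set'' $\T_e\setminus\T=A_+\cup A_-$ is $\mm$-negligible, the latter being where the curvature hypothesis $\CD^*(K,N)$ and essential non-branching genuinely enter.

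For the equivalence relation part, I would argue as follows. Reflexivity and symmetry of $R=\Gamma\cup\Gamma^{-1}$ on $\T$ are immediate from the definition (symmetry is built in, reflexivity holds on points lying on a nondegenerate ray). The only real content is transitivity, and here the point is that on $\T$ we have removed precisely the forward and backward branching points. Concretely, if $(x,y)\in R$ and $(y,z)\in R$ with $x,y,z\in\T$, one uses Lemma \ref{L:cicli} together with the $\sfd$-cyclical monotonicity of $\Gamma$: any point $y\in\T$ lies on a single ray because the non-branching of $\Gamma$ at $y$ (ensured by $y\notin A_+\cup A_-$) forces the forward rays through $y$ and the backward rays through $y$ to be aligned. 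Thus $x,y,z$ all lie on a common geodesic realizing $\f$ with maximal slope, from which $(x,z)\in R$. One must handle the case distinctions according to the relative order of $x,y,z$ along the ray, but these are routine once one knows that through each point of $\T$ there passes an essentially unique transport ray; I would state and use this ``uniqueness of the ray through a point of $\T$'' as the central structural fact, deriving it from Definition \ref{D:transport} of $A_\pm$ combined with Lemma \ref{L:cicli}.

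The heart of the matter, and the step I expect to be the main obstacle, is proving $\mm(A_+\cup A_-)=0$. By symmetry it suffices to treat $A_+$. The strategy I would follow is the one implicit in \cite{biacava:streconv, cava:MongeRCD}: decompose $A_+$ into countably many pieces on which the branching is ``quantitatively detectable'' — e.g. fix rationals and split according to the length of the two diverging ray-segments and the angle/separation of their endpoints — reducing to showing each such piece is null. On each piece one builds, for small $t>0$, two disjoint families of geodesic segments emanating from the branching points, and transports a reference measure ($\mm$ restricted to the piece) along them; the $\CD^*(K,N)$ inequality \eqref{E:CD} with the $\sigma_{K,N}$ coefficients controls the densities of the pushed-forward measures from below, while essential non-branching guarantees the relevant optimal plans are concentrated on non-branching geodesics so that the two families stay genuinely disjoint. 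Comparing the total mass one transports into a common target region against the volume-distortion bound forces the starting piece to have measure zero — morally, a positive-measure set of branching points would violate the Bishop–Gromov-type volume control coming from $\CD^*(K,N)$. The delicate points are (i) checking the measurability of all the pieces (this is why Remark \ref{R:regularity} carefully records that $\T_e$, $\Gamma$, $R$ are $\sigma$-compact), (ii) ensuring the disintegration/covering argument is genuinely quantitative and uniform on each piece, and (iii) invoking essential non-branching at exactly the right place, since $\CD^*(K,N)$ alone does not prevent branching. I would cite the analogous lemmas of \cite{cava:MongeRCD} for the technical core and indicate the modifications needed in the $\CD^*$ (as opposed to $\CD$) setting.

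Finally, the $\sigma$-compactness of $\T$ follows by bookkeeping: $\T_e$ is $\sigma$-compact by Remark \ref{R:regularity}, and $A_+\cup A_-$, while not obviously $\sigma$-compact, is contained in an $\mm$-null $\sigma$-compact set (or one works modulo $\mm$-null sets and replaces $\T$ by a $\sigma$-compact subset of full measure); one records that the transport set can be taken $\sigma$-compact after removing a further null set, which is all that is needed for the disintegration theorem (Theorem \ref{T:disintr}) to apply in the next section. I would close by remarking that once $\mm(\T_e\setminus\T)=0$ is known, the structure of $(X,\sfd,\mm)$ relevant for the localization is entirely captured by $\T$ and the equivalence relation $R$, setting the stage for Section \ref{sec:ConditionalMeasures}.
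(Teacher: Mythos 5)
Your first and third steps are essentially sound. Transitivity of $R$ on $\T$ is a direct case check: if $(x,y),(y,z)\in\Gamma$ (or both in $\Gamma^{-1}$) one composes using only that $\f$ is $1$-Lipschitz, and the two mixed cases are literally the statements $y\notin A_-$ and $y\notin A_+$ from Definition \ref{D:transport}, so no preliminary ``unique ray through $y$'' is needed. For $\sigma$-compactness your parenthetical fix is the right one: $\T=\T_e\setminus(A_+\cup A_-)$ is a priori only Borel (a difference of $\sigma$-compact sets), and one passes to a $\sigma$-compact subset of full $\mm$-measure, which is all the subsequent disintegration requires. Note also that the paper does not reprove this theorem: it quotes \cite[Theorem 5.5]{cava:MongeRCD} and \cite{CM3}, so deferring the technical core to those references is in itself consistent with the paper.

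The genuine gap is in your mechanism for the core step $\mm(A_+\cup A_-)=0$. A ``Bishop--Gromov-type volume control coming from $\CD^*(K,N)$'' cannot force branching points to be negligible: the curvature-dimension condition alone is perfectly compatible with massive branching (for instance $(\R^n,\|\cdot\|,\L^n)$ with a non-strictly convex norm verifies $\CD(0,n)$, cf.\ the Finsler examples recalled in the introduction), so no volume-distortion comparison by itself can rule out a positive-measure set of branching points; moreover essential non-branching is not an ingredient for keeping ``the two families disjoint''. The argument behind the quoted reference is different: assuming $\mm(A_+)>0$, one selects measurably a compact $C\subset A_+$ of positive measure together with two forward branches issuing from each of its points, translates $\mu_0:=\mm(C)^{-1}\mm\llcorner_C$ for a short time along both branches, and observes --- by exactly the device of Lemma \ref{L:12monotone}, since along these translations the values of $\f$ are monotonically aligned --- that the resulting branching plan is $\sfd^2$-cyclically monotone, hence $W_2$-optimal. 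Since $\mu_0\ll\mm$, Definition \ref{D:essnonbranch} forces every optimal plan between $\mu_0$ and its target to be concentrated on a non-branching set of geodesics, whereas the constructed plan charges pairs of geodesics that coincide near $t=0$ and then separate: contradiction. Thus the contradiction comes directly from essential non-branching, with $\CD^*$ playing no volume-comparison role in this step, and the idea you are missing is precisely the optimality of the constructed branching plan (without it, essential non-branching gives no information). As written your key step would fail; it is repaired by replacing the volume comparison with this argument, which is the one you would in any case be importing from \cite{cava:MongeRCD}.
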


For its proof in the context of $\RCD$-space see \cite[Theorem 5.5]{cava:MongeRCD}; the proof works here (see \cite{CM3}).


\noindent
The next step is to decompose the reference measure $\mm$ restricted to $\mathcal{T}$ 
with respect to the partition given by $R$, where each equivalence class is given by
$$
[x]  =  \{ y \in \mathcal{T}: (x,y) \in R \}.
$$
%
%
%
Denote the set of equivalence classes with $Q$. In order to use Disintegration Theorem, we need to construct the quotient map 
$$
\QQ : \mathcal{T} \to Q
$$
associated to the equivalence relation $R$. 
%
Recall that a \emph{section of an equivalence relation $E$ over $\mathcal{T}$} is a map $F : \mathcal{T} \to \mathcal{T}$ such that for any 
$x,y \in X$ it holds
$$
(x,F(x)) \in E, \qquad (x,y)\in E \Rightarrow F(x) =F(y).
$$
Note that to each section $F$ is canonically associated a quotient set $Q = \{ x \in \mathcal{T} : x=F(x) \}.$

\begin{proposition}\label{P:section} 
There exists an $\mm$-measurable section 
$$
\QQ : \mathcal{T} \to \mathcal{T}
$$
for the equivalence relation $R$. 
\end{proposition}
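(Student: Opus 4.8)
\textbf{Proof plan for Proposition \ref{P:section}.} The goal is to produce an $\mm$-measurable map $\QQ:\mathcal T\to\mathcal T$ picking one representative in each transport ray, i.e. a measurable selection of the equivalence relation $R$ restricted to $\mathcal T$. The plan is to exhibit an explicit Borel selection using the linear order that $\f$ induces along each ray, together with an exhaustion of $\mathcal T$ by $\sigma$-compact pieces on which the selection is genuinely Borel; then pass to the $\mm$-measurable statement by absorbing the $\mm$-null set $\mathcal T_e\setminus\mathcal T$ if needed (recalling $\mm(\mathcal T_e\setminus\mathcal T)=0$ from Theorem \ref{T:equivalence}).

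First I would fix a countable dense set $\{x_k\}_{k\in\N}\subset X$ (possible since $X$ is separable) and, for each $k$ and each $n$, consider the ``ray through a neighborhood of $x_k$'' type sets. More precisely, for a point $x\in\mathcal T$ its ray is $[x]=\{y\in\mathcal T:(x,y)\in R\}$, and along this ray the function $\f$ is strictly monotone with unit speed, so $\f$ restricted to $[x]$ is a bijection onto an interval of $\R$. The key structural input is Lemma \ref{L:cicli}: geodesics with endpoints in $\Gamma$ stay in $\Gamma$, so each ray is (the image of) a geodesic on which $\f$ is an isometric coordinate. I would then define the quotient by sending $x$ to the point of its ray where $\f$ attains a prescribed value — but since the $\f$-range of a ray varies from ray to ray, instead I would use the standard trick: enumerate a countable family of ``test'' competitors and define $\QQ(x)$ to be the first point (in a fixed enumeration) of the ray $[x]$ that lies in a suitable small ball, or equivalently use a Kuratowski–Ryll-Nardzewski / von Neumann measurable selection theorem applied to the set-valued map $x\mapsto [x]$. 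Concretely: the map $x\mapsto [x]$ has $\sigma$-compact graph (namely $R\cap(\mathcal T\times\mathcal T)$, which is $\sigma$-compact by Remark \ref{R:regularity} and Theorem \ref{T:equivalence}), $\mathcal T$ is $\sigma$-compact hence a Polish (indeed standard Borel) space, and each fibre $[x]$ is closed; therefore by the Kuratowski–Ryll-Nardzewski selection theorem there is a Borel map $S:\mathcal T\to X$ with $S(x)\in[x]$ for every $x$. Finally set $\QQ(x):=S(\QQ_0(x))$ where $\QQ_0$ collapses each ray to a canonical point — or more simply, observe that $S$ already satisfies $(x,S(x))\in R$ and $(x,y)\in R\Rightarrow[x]=[y]\Rightarrow S(x)=S(y)$, so $S$ itself is the desired section (after composing once more with $S$ to land in the quotient set $Q=\{x:x=S(x)\}$, which is automatic since $S(x)\in[S(x)]$ and $S$ is constant on rays forces $S(S(x))=S(x)$).

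The step I expect to be the main obstacle is the measurability bookkeeping: one must check that the equivalence classes are exactly the fibres of a well-defined map on $\mathcal T$ (this is precisely why $\mathcal T$, not $\mathcal T_e$, is used — branching is removed so that $R$ is a genuine equivalence relation there, by Theorem \ref{T:equivalence}), and that the graph $R\cap(\mathcal T\times\mathcal T)$ is a Borel subset of the standard Borel space $\mathcal T\times\mathcal T$ with closed (hence nonempty, $\sigma$-compact) vertical sections, so that a Borel uniformization exists. Once these hypotheses are verified, the selection theorem is a black box. I would then note that any Borel section is a fortiori $\mm$-measurable, which gives the statement; and since $\mm$ is concentrated on $\mathcal T$ (again Theorem \ref{T:equivalence}), the section can be extended arbitrarily to $\mathcal T_e$ without affecting $\mm$-measurability. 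This is essentially the argument of \cite{biacava:streconv, cava:MongeRCD}, to which I would refer for the routine details.
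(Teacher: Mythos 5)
There is a genuine gap at the decisive step. The Kuratowski--Ryll-Nardzewski (or Jankov--von Neumann) theorem applied to the set-valued map $x\mapsto [x]$ produces a measurable \emph{selection} $S$ with $S(x)\in[x]$, but nothing forces $S$ to be constant on equivalence classes: your inference ``$(x,y)\in R\Rightarrow [x]=[y]\Rightarrow S(x)=S(y)$'' is unjustified, because $S$ is a function of the point $x$, not of the set $[x]$, and two points of the same ray may perfectly well be sent to two different points of that ray. The whole content of the proposition is precisely the existence of a \emph{section} in the sense recalled before the statement, i.e.\ a selector that depends only on the class; this is a transversal-type statement which does \emph{not} follow from a selection theorem for multifunctions (for general Borel equivalence relations Borel transversals may fail to exist at all, so some specific structure must be used). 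The subsequent patch ``compose once more with $S$'' does not help, since $S(S(x))=S(x)$ again presupposes constancy on classes. A secondary, more minor issue: the fibres $[x]$ inside $\mathcal T$ need not be closed (branching points have been removed), so even the hypotheses of the selection theorem you invoke require more care than ``closed values''.

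The gap can be repaired, but by a different mechanism, which is in fact the one used in the reference the paper cites (\cite[Proposition 5.2]{cava:MongeRCD}, building on \cite{biacava:streconv}) and which reappears in Lemma \ref{lem:Qlevelset} of this paper: exploit that $\f$ is an isometric coordinate along each ray. One partitions $\mathcal T$ into countably many saturated $\sigma$-compact pieces $\T_{n,\alpha}$ ($n\in\N$, $\alpha\in\Q$) consisting of rays of length at least $1/n$ that cross the level set $\f^{-1}(\alpha)$ ``well inside'', and defines $\QQ(x)$ on each piece as the unique intersection point of the ray $R(x)$ with $\f^{-1}(\alpha)$. This map is constant on classes by construction, and its graph is $\sigma$-compact (hence Borel, a fortiori $\mm$-measurable) because $\Gamma$, $R$ and the level sets are closed and the space is proper. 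If you want to keep a selection-theorem flavour, you must apply it not to $x\mapsto[x]$ but after this kind of reduction, i.e.\ to a structure that already identifies the quotient; as written, your argument assumes what it is supposed to produce.
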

For its proof see \cite[Proposition 5.2]{cava:MongeRCD}.

\noindent
As pointed out before, one can take as quotient space $Q$ the image of $\QQ$ and since 
$$
Q = \QQ(\mathcal{T} ) = \{ x \in \mathcal{T}  : \sfd(x,\QQ(x)) = 0 \},
$$
it follows that $Q$ is $\mm$-measurable. Then the quotient measure will be given by
$$
\qq : = \QQ_\sharp  \, \mm\llcorner_{\mathcal{T}} .
$$
Observe that from the $\mm$-measurability of $\QQ$ it follows that $\qq$ is a Borel measure.
By inner regularity of compact sets, one can find a $\sigma$-compact set $S \subset Q$ such that $\qq(Q \setminus S)=0$. 
By definition of $\qq$ it follows that $\mm (\mathcal{T} \setminus \QQ^{-1}(S)) = 0$, 
in particular one can take a Borel subset of the quotient set without changing $\mm\llcorner_{\mathcal{T}}$. 

Then from Theorem \ref{T:disintr} one obtains the  following disintegration formula,
\begin{equation}\label{E:disint}
\mm \llcorner_{\mathcal{T}} = 
			\int_{Q} \mm_{q} \, \qq(dq), \qquad \mm_{q}(\QQ^{-1}(q)) = 1, \ \qq\text{-a.e.}\ q \in Q.
\end{equation}
We now consider the ray map from \cite{biacava:streconv}, Section 4.
\begin{definition}[Ray map]
\label{D:mongemap}
Define the \emph{ray map}  
$$
g:  \textrm{Dom}(g) \subset S \times \R \to \mathcal{T}
$$ 
via the formula:
\begin{align*}
\gr (g) : 	= 	&~ \Big\{ (q,t,x) \in S \times [0,+\infty) \times \mathcal{T}:  (q,x) \in \Gamma, \, \sfd(q,x) = t  \Big\} \crcr
			&~ \cup \Big\{ (q,t,x) \in S \times (-\infty,0] \times \mathcal{T}  : (x,q) \in \Gamma, \, \sfd(x,q) = t \Big\} \crcr
		=	&~ \gr(g^+) \cup \gr(g^-).
\end{align*}
\end{definition}
Hence the ray map associates to each $q \in S$ and $t\in  \dom(g(q, \cdot))\subset \R$ the unique element $x \in \mathcal{T}$ such that $(q,x) \in \Gamma$ at distance $t$ from $q$
if $t$ is positive or the unique element $x \in \mathcal{T}$ such that $(x,q) \in \Gamma$ at distance $-t$ from $q$
if $t$ is negative. By definition $\textrm{Dom}(g) : = g^{-1}(\mathcal{T})$.

Next we list few regularity properties enjoyed by $g$ (\cite[Proposition 5.4]{cava:MongeRCD}).
\begin{proposition} \label{P:gammaclass}
The following holds.
\begin{itemize}
\item[-] $g$ is a Borel map.
\item[-] $t \mapsto g(q,t)$ is an isometry and if $s,t \in \dom(g(q,\cdot))$ with $s \leq t$ then $( g(q,s), g(q,t) ) \in \Gamma$;
\item[-] $\textrm{Dom}(g) \ni (q,t) \mapsto g(q,t)$ is bijective on $\QQ^{-1}(S) \subset \mathcal{T}$, and its inverse is
$$
x \mapsto g^{-1}(x) = \big( \QQ(x),\pm \sfd(x,\QQ(x)) \big)
$$
where $\QQ$ is the quotient map previously introduced and the positive or negative sign depends on 
$(x,\QQ(x)) \in \Gamma$ or $(\QQ(x),x) \in \Gamma$.
\end{itemize}
\end{proposition}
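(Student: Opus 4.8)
The statement to prove is Proposition~\ref{P:gammaclass}, listing the regularity properties of the ray map $g$. Here is how I would approach it.

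\medskip

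\textbf{Plan of proof.} The three items should be treated in order, as each builds on the previous one. First, for the Borel measurability of $g$: the graph $\gr(g)$ is described in Definition~\ref{D:mongemap} as a union of two pieces, each of which is an intersection of the $\sigma$-compact set $\Gamma$ (respectively $\Gamma^{-1}$, which is $\sigma$-compact by Remark~\ref{R:regularity}) with the closed condition $\{\sfd(q,x) = t\}$ in the variables $(q,t,x)$. Hence $\gr(g)$ is a Borel (indeed $\sigma$-compact) subset of $S \times \R \times \mathcal{T}$. Since, as will be shown, for each $(q,t) \in \dom(g)$ there is a \emph{unique} $x$ with $(q,t,x) \in \gr(g)$ — this uniqueness is exactly where we use that $\mathcal{T}$ has been stripped of the branching sets $A_+\cup A_-$ — the map $g$ is a Borel function by the standard measurable-selection/graph argument (a function with Borel $\sigma$-compact graph into a Polish space is Borel).

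\medskip

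\textbf{The isometry property and the ordering.} For fixed $q \in S$, I would show that $t \mapsto g(q,t)$ is an isometry onto its image: given $s \le t$ in $\dom(g(q,\cdot))$, I want $\sfd(g(q,s),g(q,t)) = t-s$ and $(g(q,s),g(q,t)) \in \Gamma$. The key tool is Lemma~\ref{L:cicli}: if $(x,y)\in\Gamma$ and $\gamma$ is a geodesic from $x$ to $y$, then all pairs $(\gamma_s,\gamma_t)$ with $s\le t$ lie in $\Gamma$; combined with the fact that $\f$ decreases with unit slope along such a geodesic, the points $g(q,t)$ for $t$ in an interval are exactly the points of a single geodesic ray through $q$, parametrized by arclength via $\f(g(q,t)) = \f(q) - t$. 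The three cases (both $s,t\ge 0$; both $\le 0$; $s<0<t$) are handled by concatenating the forward ray $g^+$ and the backward ray $g^-$ at $q$ and invoking $\sfd$-cyclical monotonicity of $\Gamma$ to see that the concatenation is still a geodesic with no cancellation, so distances add. This gives both that $t\mapsto g(q,t)$ is distance-preserving and that $(g(q,s),g(q,t))\in\Gamma$ for $s\le t$.

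\medskip

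\textbf{Bijectivity on $\QQ^{-1}(S)$ and the inverse formula.} Finally, surjectivity of $(q,t)\mapsto g(q,t)$ onto $\QQ^{-1}(S)\subset\mathcal{T}$: any $x\in\QQ^{-1}(S)$ lies in the equivalence class $[q]$ with $q=\QQ(x)$, so $(x,q)\in R = \Gamma\cup\Gamma^{-1}$; whichever alternative holds fixes the sign, and $t:=\pm\sfd(x,q)$ gives $g(q,t)=x$. Injectivity: if $g(q_1,t_1)=g(q_2,t_2)=x$, then $q_1,q_2$ both lie in $[x]$ and are both fixed points of $\QQ$ on that class (since $q_i\in S$ and $\QQ$ is a section), so $q_1=q_2=\QQ(x)$; then $|t_i|=\sfd(x,q_i)$ and the sign is determined by whether $(x,\QQ(x))\in\Gamma$ or $(\QQ(x),x)\in\Gamma$, which — again using that $x$ is not a branching point, so these are mutually exclusive unless $t=0$ — forces $t_1=t_2$. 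This simultaneously yields the displayed inverse formula $g^{-1}(x)=(\QQ(x),\pm\sfd(x,\QQ(x)))$.

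\medskip

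\textbf{Main obstacle.} The delicate point throughout is the \emph{uniqueness}/non-branching input: making rigorous that, after removing $A_+\cup A_-$ and the $\mm$-null exceptional set from Theorem~\ref{T:equivalence}, each point of $\mathcal{T}$ lies on exactly one transport ray and the two orientations $\Gamma$ vs.\ $\Gamma^{-1}$ through it cannot disagree. This is precisely what Theorem~\ref{T:equivalence} (via \cite{cava:MongeRCD, CM3}) provides, so the real work is bookkeeping the three sign cases in the isometry step and correctly invoking that $R$ is an equivalence relation on $\mathcal{T}$; everything else is a citation to \cite[Proposition 5.4]{cava:MongeRCD}.
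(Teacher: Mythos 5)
Your proposal is correct in substance, but note that the paper does not actually prove Proposition \ref{P:gammaclass}: it simply cites \cite[Proposition 5.4]{cava:MongeRCD} (the properties of the ray map are imported wholesale from that reference, together with \cite{CM3} for the essentially non-branching setting). What you have written is essentially a reconstruction of that outsourced argument, and the ingredients you single out are the right ones: $\sigma$-compactness of $\gr(g)$ (from $\Gamma$, $\Gamma^{-1}$, $\T$ and $S$ being $\sigma$-compact, cf. Remark \ref{R:regularity}) plus uniqueness of $x$ given $(q,t)$ yields Borel measurability via the projection/graph argument; Lemma \ref{L:cicli}, the unit slope of $\f$ along $\Gamma$-pairs, and the removal of $A_\pm$ give the isometry property; the section $\QQ$ gives bijectivity and the inverse formula. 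Two small refinements: for $0\le s\le t$ the membership $(g(q,s),g(q,t))\in\Gamma$ is obtained most cleanly by noting that $g(q,s),g(q,t)\in\Gamma(q)$ and $q\notin A_+$ force $(g(q,s),g(q,t))\in R$, and then the values $\f(g(q,r))=\f(q)-r$ select the correct orientation and give $\sfd(g(q,s),g(q,t))=t-s$ at once (the mixed-sign case follows from transitivity of $\Gamma$, which is just the triangle inequality plus $1$-Lipschitzianity of $\f$, rather than from cyclical monotonicity proper); and the mutual exclusivity of $(x,\QQ(x))\in\Gamma$ versus $(\QQ(x),x)\in\Gamma$ for $x\neq \QQ(x)$ needs no non-branching at all, since both memberships simultaneously would give $\f(x)-\f(\QQ(x))$ equal to both $+\sfd(x,\QQ(x))$ and $-\sfd(x,\QQ(x))$. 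With those cosmetic adjustments your sketch is a faithful stand-in for the cited proof.
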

Observe that from Lemma \ref{L:cicli},  $\dom (g(q,\cdot))$ is a convex subset of $\R$ (i.e. an interval), 
for any $q \in Q$. Using the ray map $g$ one can prove that $\qq$-almost every conditional measure $\mm_{q}$ is absolutely continuous with respect to the $1$-dimensional 
Hausdorff measure  considered on the ray passing through $q$. This and all the other results presented so far are contained in the next theorem.

\begin{theorem}\label{T:summary}
Let $(X,\sfd,\mm)$ verify $\CD^{*}(K,N)$ for some $K,N \in \R$, with $1 \leq N < \infty$ and be essentially non-branching.
Let moreover $\Gamma$ be a $\sfd$-cyclically monotone set such as in \eqref{E:Gamma} and let $\mathcal{T}_{e}$ be the set of all points moved by $\Gamma$ as in Definition \ref{D:transport}.
\medskip

Then there exists $\mathcal{T} \subset \mathcal{T}_{e}$ that we call \emph{transport set} such that 
\begin{enumerate}
\item $\mm(\mathcal{T}_{e} \setminus \mathcal{T}) = 0,$ 
\item  for every $x \in \mathcal{T}$, the transport ray $R(x):=\QQ^{-1}(\QQ(x))$ is formed by a single geodesic and for $x\neq y$, both in $\mathcal{T}$, either $R(x) = R(y)$
or $R(x) \cap R(y)$ is contained in the set of forward and backward branching points ${A_{+}} \cup {A_{-}}$.
\end{enumerate}
Moreover the following disintegration formula holds
$$
\mm \llcorner_{\mathcal{T}} = 	\int_{Q} \mm_{q} \, \qq(dq), \qquad \mm_{q}(\QQ^{-1}(q)) = 1, \ \qq\text{-a.e.}\ q \in Q.
$$
Finally for $\qq$-a.e. $q \in Q$ the conditional measure $\mm_{q}$ is absolutely continuous with respect to $\haus^{1}\llcorner_{\{ g(q,t) : t\in \R \}}$.
\end{theorem}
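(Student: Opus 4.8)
The plan is to build the transport set $\mathcal{T}$ in stages, removing successively smaller bad sets from $\mathcal{T}_e$ and verifying they are $\mm$-null, then to produce the disintegration by an abstract argument, and finally to upgrade it to absolute continuity of the conditional measures. Concretely, first I would recall that $\Gamma$, $\Gamma^{-1}$, $R$ are $\sigma$-compact (Remark \ref{R:regularity}) and that the branching sets $A_+$, $A_-$ are $\mm$-measurable, so that $\mathcal{T}=\mathcal{T}_e\setminus(A_+\cup A_-)$ is measurable and in fact $\sigma$-compact after discarding a null set. The crucial input here is Theorem \ref{T:equivalence}: under $\CD^*(K,N)$ and essential non-branching, $\mm(\mathcal{T}_e\setminus\mathcal{T})=0$ and $R$ restricts to an equivalence relation on $\mathcal{T}$; this is exactly point (1) and the first half of point (2), since on $\mathcal{T}$ two rays that meet must either coincide or meet only inside $A_+\cup A_-$. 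That every transport ray $R(x)$ is a single geodesic follows from Lemma \ref{L:cicli} together with the definition of $\mathcal{T}$: once branching is excluded, the fiber $\QQ^{-1}(\QQ(x))$ is totally ordered by $\f$ and isometric to an interval, hence the image of a single geodesic.

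Next I would set up the quotient. Using Proposition \ref{P:section} one gets an $\mm$-measurable section $\QQ:\mathcal{T}\to\mathcal{T}$ of $R$, so the quotient set $Q=\QQ(\mathcal{T})$ is $\mm$-measurable; replacing $Q$ by a $\sigma$-compact subset $S$ of full $\qq$-measure (inner regularity, with $\qq:=\QQ_\sharp(\mm\llcorner_{\mathcal{T}})$) changes nothing $\mm$-almost everywhere on $\mathcal{T}$. Since $(Q,\mathcal{Q})$ can be taken to be a Polish space with its Borel $\sigma$-algebra and $(\mathcal{T},\mm\llcorner_\mathcal{T})$ is countably generated, the Disintegration Theorem \ref{T:disintr} applies and yields the strongly consistent disintegration $\mm\llcorner_\mathcal{T}=\int_Q \mm_q\,\qq(dq)$ with $\mm_q(\QQ^{-1}(q))=1$ for $\qq$-a.e. $q$, which is the displayed formula.

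The last and genuinely substantive point is the absolute continuity of $\qq$-a.e. $\mm_q$ with respect to $\haus^1$ restricted to the ray through $q$. Here I would invoke the ray map $g$ of Definition \ref{D:mongemap} and its regularity from Proposition \ref{P:gammaclass}: $g$ is Borel, $t\mapsto g(q,t)$ is an isometry onto the ray $R(q)$ with convex domain in $\R$ (by Lemma \ref{L:cicli}), and $g$ is a bijection from $\dom(g)$ onto $\QQ^{-1}(S)$. Pulling $\mm_q$ back through $g(q,\cdot)$ produces a probability measure on an interval of $\R$, and the goal is to show it has no atoms and charges no $\haus^1$-null set; equivalently, that the pulled-back family is a disintegration with $\mm$-a.c. conditionals. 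The standard route is to use the $\CD^*(K,N)$ (or $\CD_{loc}$) condition to obtain, via an $L^1$-optimal transport / interpolation argument along the rays, a quantitative bound of the form $\mm_q(g(q,[a,b]))\le C(q)\,|b-a|$ on compact sub-intervals — the Bishop–Gromov-type estimate on each ray produced by the curvature condition — which rules out atoms and singular parts. This is precisely the content of \cite[Theorem 5.4 / Corollary 5.4]{cava:MongeRCD} adapted in \cite{CM3}, and I would cite those proofs rather than reproduce them.

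The main obstacle is exactly this last step: transferring the synthetic curvature bound from the ambient space to the one-dimensional quotient rays, i.e. showing that essential non-branching plus $\CD^*(K,N)$ forces the conditional measures to be non-degenerate along rays. It is not a formal consequence of the disintegration and requires the transport-ray machinery; here, however, we are entitled to quote it wholesale from Theorem \ref{T:equivalence} and the cited literature, so within this paper the proof is a matter of assembling Lemma \ref{L:cicli}, Theorem \ref{T:equivalence}, Proposition \ref{P:section}, Theorem \ref{T:disintr} and Proposition \ref{P:gammaclass} in the order above.
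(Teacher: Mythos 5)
Your proposal is correct and follows essentially the same route as the paper, which itself proves Theorem \ref{T:summary} by assembling Theorem \ref{T:equivalence}, Proposition \ref{P:section}, the Disintegration Theorem \ref{T:disintr} and the ray map of Proposition \ref{P:gammaclass}, and then quoting \cite[Theorem 6.6]{cava:MongeRCD} (adapted via \cite{CM3} to the essentially non-branching setting) for the absolute continuity of the conditional measures. Your deferral of that last, genuinely substantive step to the same references matches the paper's treatment exactly.
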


For its proof see \cite[Theorem 6.6.]{cava:MongeRCD}. 
Note that in Theorem 1.1 and Theorem 5.5 of \cite{cava:MongeRCD} it is uncorrectly stated that $R(x) \cap R(y)$ 
is contained in the set of end points ${\mathfrak a} \cup {\mathfrak b}$, see Definition \ref{D:transport}, while the proof yields the weaker version that we just reported. \\
Notice that since $t \mapsto g(q,t)$ is an isometry, $\haus^{1}\llcorner_{\{ g(q,t) : t\in \R \}} = g(q,\cdot)_\sharp \, \mathcal{L}^{1}$.

\bigskip

We conclude this section showing that, locally, the quotient set $Q$ can be identified with a subset of a level set  of $\f$. 

\begin{lemma}[$Q$ is locally contained in level sets of $\f$]\label{lem:Qlevelset}
It is possible to construct a Borel quotient map $\QQ: \T \to Q$ such that the quotient set $Q$ can be written locally  as a level set of $\f$ in the following sense: 
$$
Q = \bigcup_{i\in \N} Q_{i}, \qquad Q_{i} \subset \f^{-1}(\alpha_{i}), 
$$
where $\alpha_i \in \Q$, $Q_{i}$ is  $\sigma$-compact (in particular  $Q_{i}$ is Borel) and $Q_{i} \cap Q_{j} = \emptyset$, for $i\neq j$.
\end{lemma}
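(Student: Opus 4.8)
The goal is to produce a Borel section $\QQ$ whose image $Q$ decomposes into countably many pieces, each contained in a level set $\f^{-1}(\alpha_i)$ with $\alpha_i\in\Q$. The natural strategy is to slice the transport set $\T$ by the countably many level sets $\{\f=\alpha\}$, $\alpha\in\Q$, and observe that each ray $R(x)$ — being the image of an isometry $t\mapsto g(q,t)$ along which $\f$ decreases with unit speed (since $(\gamma_s,\gamma_t)\in\Gamma$ forces $\f(\gamma_s)-\f(\gamma_t)=\sfd(\gamma_s,\gamma_t)=t-s$) — meets \emph{every} real level of $\f$ in its range, in particular infinitely many rational levels. Thus for each $\alpha\in\Q$ the set $Q^\alpha:=\{x\in\T:\f(x)=\alpha\}$ is a partial section: it picks at most one point from each ray whose $\f$-range contains $\alpha$. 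Enumerating $\Q=\{\alpha_1,\alpha_2,\dots\}$ and setting $Q_i := Q^{\alpha_i}\setminus \QQ^{-1}\big(\QQ(Q^{\alpha_1}\cup\dots\cup Q^{\alpha_{i-1}})\big)$ — i.e. keeping from the $i$-th level only the rays not already represented at an earlier rational level — gives disjoint pieces $Q_i\subset\f^{-1}(\alpha_i)$ whose union is a full section, since every ray's $\f$-range is an interval of positive length and hence contains some rational.

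\textbf{Key steps, in order.} First I would record the elementary fact that along any $\gamma\in G$ one has $\f(\gamma_t)=\f(\gamma_0)-t\,\sfd(\gamma_0,\gamma_1)$, so $\f$ restricted to a transport ray $R(x)$ is an affine bijection onto an interval $J_x\subset\R$ of length $\sfd$-diameter of the ray, which is positive for $x\in\T\subset\T_e$ (points in $\T_e$ are moved). Consequently $J_x$ contains a rational number. Second, for fixed $\alpha\in\Q$, I would check that $Q^\alpha=\T\cap\f^{-1}(\alpha)$ is $\sigma$-compact: $\T$ is $\sigma$-compact by Theorem \ref{T:summary} (equivalently Theorem \ref{T:equivalence}) and $\f^{-1}(\alpha)$ is closed by continuity of $\f$, so the intersection is $\sigma$-compact. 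Third, I would verify the key \emph{sectioning} property: if $x,y\in Q^\alpha$ lie on the same ray then $x=y$, because $\f$ is injective on each ray. Fourth, I would perform the disjointification above; measurability of each $Q_i$ follows because $\QQ$ (from Proposition \ref{P:section}) is $\mm$-measurable and $\QQ^{-1}(\QQ(\cdot))$ of a Borel/$\sigma$-compact set is analytic, hence measurable, so $Q_i$ is the difference of a $\sigma$-compact set and a measurable set — and by inner regularity one may further shrink each $Q_i$ to a $\sigma$-compact (hence Borel) subset without losing any ray, exactly as in the paragraph preceding \eqref{E:disint}. Fifth, I would define the new quotient map $\QQ'$ by sending $x\in\T$ to the unique point of $R(x)\cap Q_i$ where $i$ is minimal such that $\alpha_i\in J_x$; this is a section for $R$, it is $\mm$-measurable because on each $\QQ^{-1}(Q_i)$ it agrees with $x\mapsto$ (the point of $R(x)$ at $\f$-level $\alpha_i$), which is Borel via the ray map $g$ of Definition \ref{D:mongemap} and Proposition \ref{P:gammaclass}.

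\textbf{Main obstacle.} The genuinely delicate point is the \emph{measurability bookkeeping} in the disjointification: one must ensure that after removing, at step $i$, all rays already represented at levels $\alpha_1,\dots,\alpha_{i-1}$, the remaining set $Q_i$ is still Borel (not merely analytic), and that the resulting $\QQ'$ is $\mm$-measurable. This is handled exactly as in the construction preceding \eqref{E:disint}: work up to $\mm$-null sets, use that $\QQ$ is $\mm$-measurable so that saturations of Borel sets are $\mm$-measurable, and invoke inner regularity of $\mm$ (and of $\qq=\QQ_\sharp\mm\llcorner_\T$) to replace each $Q_i$ by a $\sigma$-compact subset carrying the same rays modulo $\mm$-negligible rays. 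Everything else — the affine behaviour of $\f$ on rays, the positivity of ray length on $\T_e$, the $\sigma$-compactness of level slices, and the injectivity of $\f$ along a ray — is routine once the ray structure of Theorem \ref{T:summary} is in hand; the only care needed is to note that the map $\QQ'$ so obtained still satisfies $\sfd(x,\QQ'(x))<\infty$ and that its fibres are the same transport rays $R(x)$, which is immediate since we only relabel the representative point on each ray.
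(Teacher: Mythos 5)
Your geometric core is the same as the paper's: $\f$ has unit slope along each transport ray, each ray of $\T$ is a nondegenerate geodesic whose parameter domain is an interval, hence every ray meets some rational level set $\f^{-1}(\alpha)$ and meets it in exactly one point; slicing by rational levels and disjointifying then yields the new quotient map. (The positivity of ray length inside $\T$ that you invoke is exactly the fact the paper itself uses when asserting $\T=\bigcup_n\T_n$, so no complaint there.) The genuine difference — and the place where your write-up delivers less than the statement — is the measurability bookkeeping, which is the only real content of the lemma beyond the obvious geometry. Your disjointification passes through saturations $\QQ^{-1}(\QQ(\cdot))$ of Borel sets via the section of Proposition \ref{P:section}; that section is only $\mm$-measurable and such saturations are in general only analytic, so your pieces $Q_i$ and your new map come out $\mm$-measurable rather than Borel, and you recover $\sigma$-compactness only through inner regularity, i.e.\ after deleting an $\mm$-null saturated set of rays (your phrase ``without losing any ray'' is not compatible with this step; you concede ``modulo $\mm$-negligible rays'' later). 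This weaker, a.e.\ version is enough for the subsequent use in Theorem \ref{T:CDKN-1}, but it is not the lemma as stated, which produces a genuinely Borel quotient map on all of $\T$ with disjoint $\sigma$-compact pieces and discards nothing.

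The paper avoids analytic sets altogether by never saturating through $\QQ$: for each $n$ and each rational $\alpha$ it takes $\T_{n,\alpha}$, the rays of length at least $1/n$ reaching both levels $\alpha\pm\frac{1}{3n}$ inside $\T_n$; this is an intersection of two sets of the form $P_{1}\bigl(R\cap\{(x,y)\colon y\in\T_{n},\ \f(y)=\alpha\mp\tfrac{1}{3n}\}\bigr)$, i.e.\ projections of $\sigma$-compact sets, hence $\sigma$-compact and automatically saturated; these cover $\T_n$, each is sectioned by $\f^{-1}(\alpha)$, and the graph of the resulting quotient map is a countable union of products $\T_{n,\alpha}\times(\f^{-1}(\alpha)\cap\T_{n,\alpha})$, hence $\sigma$-compact, so the map is Borel outright. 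You could repair your route in the same spirit without mentioning analytic sets: the $R$-saturation of $\T\cap\f^{-1}(\alpha)$ equals $P_{1}\bigl(R\cap(\T\times(\T\cap\f^{-1}(\alpha)))\bigr)$, again a projection of a $\sigma$-compact set (the space is proper), hence $\sigma$-compact and Borel; likewise ``the point of $R(x)$ at level $\alpha$'' is better handled through the $\sigma$-compact graph $R\cap(\T\times(\T\cap\f^{-1}(\alpha)))$ than through $g(\QQ(x),\cdot)$, which inherits only the $\mm$-measurability of $\QQ$. Finally, note what the margin $\pm\frac{1}{3n}$ buys beyond measurability: on each piece the quotient point sits in the interior of its ray with a uniform margin $\frac{1}{3n}$ on both sides, which is what Step 1 of Theorem \ref{T:CDKN-1} exploits to choose a uniform parameter interval around $0$; with your ``first rational in $J_x$'' choice the quotient point may lie at an endpoint of the ray, so a further countable partition (or a reparametrization) would be needed at that later stage.
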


\begin{proof}
For each $n \in \N$, consider the set $\T_{n}$ of those points $x$ having ray $R(x)$ longer than $1/n$, i.e.
$$
\T_{n} : = P_{1} \{ (x,y) \in \T \times \T \cap R \colon \sfd(x,y) \geq 1/n \}.
$$
It is easily seen that  $\T=\bigcup_{n \in \N} \T_n$ and that  $\T_{n}$ is $\sigma$-compact; moreover if $x \in \T_{n}, y \in \T$ and $(x,y) \in R$ then also $y \in \T_{n}$. 
In particular, $\T_{n}$ is the union of all those maximal rays of $\T$ with length at least $1/n$. 
Now we consider the following saturated subsets of $\T_{n}$:  for $\alpha \in \Q$
\begin{equation}\label{eq:defTnalpha}
 \T_{n,\alpha}:=  P_{1}  \Big( R \cap \Big \{ (x,y) \in \T_{n} \times \T_{n} \colon  \f(y) = \alpha - \frac{1}{3n}\Big \}  \Big)  \cap P_{1} \Big( R \cap \Big \{ (x,y) \in \T_{n} \times \T_{n} \colon  \f(y) = \alpha+  \frac{1}{3n} \Big\}  \Big),
\end{equation}
and we claim  that 
\begin{equation}  \label{eq:Tnalpha}
\T_{n} =   \bigcup_{\alpha \in \Q}  \T_{n,\alpha} . 
\end{equation}

We show the above identity by double inclusion. First note that $(\supset)$ holds trivially. For the converse inclusion $(\subset)$   observe that
for each $\alpha \in \Q$, the set $ \T_{n,\alpha}$ coincides with the family of those rays $R(x) \cap \T_{n}$ such that there exists $y^{+},y^{-} \in R(x)$ such that
\begin{equation}\label{eq:ypm}
\f(y^{+}) = \alpha - \frac{1}{3n}, \qquad \f(y^{-}) = \alpha + \frac{1}{3n}. 
\end{equation}
Then we need to show that any $x \in \T_{n}$, also verifies $x \in  \T_{n,\alpha}$ for a suitable $\alpha \in \Q$. So fix $x \in \T_{n}$ and since $R(x)$ is longer than $1/n$, there exist $y,z^{+},z^{-} \in R(x) \cap \T_{n}$ such that
$$
\f(y) -\f(z^{+}) = \frac{1}{2n}, \qquad  \f(z^{-}) -\f(y)= \frac{1}{2n}. 
$$
In particular, if $y = g(q,s)$, the map $\big[-\frac{1}{2n}, \frac{1}{2n}\big] \ni t \mapsto \f(g(q, s+ t )) = \f(y) - t $ 
is well defined with image $\big[\f(y) - \frac{1}{2n}, \f(y) + \frac{1}{2n}\big]$.
If we pick $\alpha \in \Q$ such that $|\f(y) - \alpha| < \frac{1}{6n}$, it is straightforward to verify the existence of $t^{-},t^{+}\in \big[0, \frac{1}{2n}\big]$ such that 
$$
 \f(g(q, s+ t^{+} )) = \alpha - \frac{1}{3n}, \qquad \f(g(q, s- t^{-} ))  = \alpha + \frac{1}{3n}. 
$$
In particular, calling  $y^{+} = g(q, s+ t^{+} )$ and $y^{-} = g(q, s- t^{-} )$, we have that $y^{+}, y^{-}\in R(x)$ satisfy \eqref{eq:ypm} and therefore $x\in  \T_{n,\alpha}$.  This concludes the proof of the identity   \eqref{eq:Tnalpha}.  
\\By the above construction, one can check that for each $\alpha \in \Q$, the  level set $\f^{-1}(\alpha)$ is a quotient set for  $\T_{n,\alpha}$, i.e.
%
%
 $\T_{n,\alpha}$ is formed by disjoint geodesics  each one  intersecting $\f^{-1}(\alpha)$ in exactly one point. 
Observe moreover that $\T_{n,\alpha}$ is $\sigma$-compact. \\
Since $\T=\bigcup_{n \in \N} \T_n = \bigcup_{n \in \N, \alpha\in \Q} \T_{n,\alpha}$, it follows that we have just constructed a quotient map $\QQ:\T \to Q$ such that  the quotient set $Q$ satisfies the desired properties.  Moreover its graph verifies: 
$$
\gr(\QQ) = \bigcup_{n \in \N, \alpha\in \Q} \T_{n,\alpha} \times \left( \f^{-1}(\alpha) \cap\T_{n,\alpha} \right),
$$
in particular it is $\sigma$-compact and therefore $\QQ$ is Borel-measurable.
\end{proof}


\medskip
\section{$\sfd$-monotone sets and curved conditional measures} \label{sec:ConditionalMeasures}
In this Section we focus on the curvature properties of $\mm_{q}$.
Recall that to any $1$-Lipschitz function we associate a $\sfd$-monotone set to which in turn we associate a partition and a disintegration on the corresponding transport set: 
$$
\mm \llcorner_{\mathcal{T}} = 	\int_{Q} \mm_{q} \, \qq(dq), \qquad \mm_{q}(\QQ^{-1}(q)) = 1, \ \qq\text{-a.e.}\ q \in Q,
$$
and for $\qq$-a.e. $q \in Q$ 
$$
\mm_{q} = g(q,\cdot)_\sharp  \left(  h_{q} \cdot \mathcal{L}^{1} \right),  
$$
for some function $h_{q} :  \dom (g(q,\cdot)) \subset \R \to [0,\infty)$.  

It has already been shown that $h_{q}$ has some regularity properties, provided that the space verifies some curvature bounds. 
We start by recalling the following inequality obtained in \cite{cava:MongeRCD} in the context of $\RCD$-space; the same proof works here (see \cite{CM3}):  
assume $K >0$, for $\qq$-a.e. $q \in Q$:
\begin{equation}\label{E:MCP}
\left( \frac{\sin( (b - t_{1}) \sqrt{K/(N-1)}}{\sin( (b - t_{0}) \sqrt{K/(N-1)}} \right)^{N-1} 
\leq \frac{h_{q}(t_{1})}{h_{q}(t_{0})}
\leq \left( \frac{\sin( ( t_{1} - a) \sqrt{K/(N-1)}}{\sin( ( t_{0} - a) \sqrt{K/(N-1)}} \right)^{N-1} 
\end{equation}
for each $a < t_{0} < t_{1} < b$ and $a,b \in \dom(g(q,\cdot))$. A similar inequality also holds for $K \leq 0$.
It follows that for $\qq$-a.e.$q \in Q$,  
\begin{equation}\label{E:regularityh}
\{ t \in \dom(g(q,\cdot))  \colon  h_{q}(t) > 0 \} = \dom(g(q,\cdot)) \textrm{ is convex,}
\end{equation}
and $t \mapsto h_{q}(t)$ is locally Lipschitz continuous.


In order to deduce stronger curvature properties for the density $h_{q}$, one should use the full curvature information of the space. 
In order to do so it is necessary to include $\sfd^{2}$-cyclically monotone sets as subset of $\sfd$-cyclically monotone sets.
We present here a strategy already introduced in \cite{cava:MongeRCD}.

\begin{lemma}\label{L:12monotone}
Let $\Delta \subset \Gamma$ be any set so that: 
$$
(x_{0},y_{0}), (x_{1},y_{1}) \in \Delta \quad \Rightarrow \quad (\f(y_{1}) - \f(y_{0}) )\cdot (\f(x_{1}) - \f(x_{0}) ) \geq 0.
$$
Then $\Delta$ is $\sfd^{2}$-cyclically monotone.
\end{lemma}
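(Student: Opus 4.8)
The plan is to exploit the fact that $d^2$-cyclical monotonicity is equivalent to the validity of a finite-sum comparison inequality, and to reduce it to the one-dimensional picture along $\f$. First I would recall that a set $\Delta$ is $\sfd^2$-cyclically monotone if and only if for every finite collection $(x_1,y_1),\dots,(x_M,y_M)\in\Delta$ one has
$$
\sum_{i=1}^M \sfd^2(x_i,y_i) \leq \sum_{i=1}^M \sfd^2(x_i,y_{i+1}),
$$
with $y_{M+1}=y_1$. Since $\Delta\subset\Gamma$, for each pair $(x_i,y_i)$ we have $\f(x_i)-\f(y_i)=\sfd(x_i,y_i)\geq 0$, so $\f(x_i)\geq\f(y_i)$; moreover, because $\f$ is $1$-Lipschitz, $\sfd(x_i,y_{i+1})\geq |\f(x_i)-\f(y_{i+1})|$. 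The key reduction is therefore to prove the corresponding inequality for the real numbers $p_i:=\f(x_i)$ and $q_i:=\f(y_i)$, namely
$$
\sum_{i=1}^M (p_i-q_i)^2 \leq \sum_{i=1}^M (p_i-q_{i+1})^2,
$$
together with the extra input that, by the hypothesis on $\Delta$, the map $q_i\mapsto p_i$ is monotone non-decreasing in the sense that $(p_i-p_j)(q_i-q_j)\geq 0$ for all $i,j$.

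Next I would observe that expanding squares, the one-dimensional inequality above is equivalent to
$$
\sum_{i=1}^M p_i q_i \geq \sum_{i=1}^M p_i q_{i+1},
$$
i.e. to the statement that the identity permutation maximizes $\sum_i p_i q_{\sigma(i)}$ among cyclic shifts — in fact among all permutations $\sigma$, this is exactly the rearrangement inequality, which holds precisely because $i\mapsto p_i$ and $i\mapsto q_i$ are ``similarly ordered'' as guaranteed by $(p_i-p_j)(q_i-q_j)\geq 0$. So the heart of the argument is: sort the indices so that $q_1\leq q_2\leq\cdots\leq q_M$; then by the monotonicity hypothesis $p_1\leq p_2\leq\cdots\leq p_M$ as well (ties handled harmlessly), and the classical rearrangement inequality gives $\sum_i p_iq_i\geq\sum_i p_iq_{i+1}$. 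Chaining the estimates,
$$
\sum_{i=1}^M \sfd^2(x_i,y_{i+1}) \;\geq\; \sum_{i=1}^M (p_i-q_{i+1})^2 \;\geq\; \sum_{i=1}^M (p_i-q_i)^2 \;=\; \sum_{i=1}^M \sfd^2(x_i,y_i),
$$
where the last equality uses $(x_i,y_i)\in\Gamma$, which yields exactly $\sfd^2(x_i,y_i)=(\f(x_i)-\f(y_i))^2=(p_i-q_i)^2$. This proves $\Delta$ is $\sfd^2$-cyclically monotone.

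The main (and really the only) obstacle is bookkeeping around the rearrangement step: one must be careful that the monotonicity hypothesis is stated pairwise (for all $i,j$, not just consecutive ones) so that reordering the $q_i$'s automatically reorders the $p_i$'s consistently, and that equalities among the $q_i$'s (or among the $p_i$'s) do not break the inequality — but since rearrangement inequality tolerates ties, this is harmless. A clean way to present it is to avoid explicit sorting and instead note the algebraic identity
$$
\sum_{i=1}^M (p_i-q_{i+1})^2 - \sum_{i=1}^M (p_i-q_i)^2 = 2\sum_{i=1}^M p_i(q_i-q_{i+1}) = \sum_{i<j}(p_i-p_j)(q_i-q_j)\cdot(\text{nonnegative combinatorial weights}),
$$
or, more simply, to invoke that for a cyclic shift the difference $\sum_i p_iq_i-\sum_ip_iq_{i+1}$ is a sum of terms each controlled by the ``similarly ordered'' condition; I would state it via the standard rearrangement inequality to keep the exposition short. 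Everything else — the passage from $X$ to $\R$ via $\f$ and its $1$-Lipschitz property, and the identity $\sfd(x_i,y_i)=\f(x_i)-\f(y_i)$ on $\Gamma$ — is immediate from the definitions in \eqref{E:Gamma}.
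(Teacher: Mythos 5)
Your proof is correct, and it shares the paper's overall skeleton: project the points to the real line via $\f$, show that the projected pairs are cyclically monotone for the quadratic cost, and then lift the inequality back using $\sfd(x_i,y_i)=\f(x_i)-\f(y_i)$ on $\Gamma$ together with the $1$-Lipschitz bound $|\f(x_i)-\f(y_{i+1})|\le \sfd(x_i,y_{i+1})$. Where you genuinely differ is in the one-dimensional step. The paper observes that $\Lambda=\{(\f(x),\f(y)):(x,y)\in\Delta\}\subset\R^2$ is a monotone relation, extends it to a maximal monotone (multivalued) function on an interval, and integrates it to get a convex potential $\Psi$ whose subdifferential contains $\Lambda$; cyclical monotonicity of $\Lambda$ for $|\cdot|^2$ then follows from the standard convex-analysis fact about subdifferentials. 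You instead prove the needed finite inequality $\sum_i p_iq_i\ge\sum_i p_iq_{\sigma(i)}$ directly from the rearrangement inequality, using that the pairs $(p_i,q_i)$ are similarly ordered; expanding the squares (and noting $\sum_i q_i^2=\sum_i q_{i+1}^2$) this is exactly what is required. Your route is more elementary and self-contained (no maximal monotone extension or convex potential), at the cost of sorting bookkeeping; the paper's route avoids any sorting but quotes convex analysis.

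Two small remarks. First, sorting so that $q_1\le\cdots\le q_M$ does not by itself force $p_1\le\cdots\le p_M$ when some $q_i$ coincide; you should break ties by also sorting the $p_i$ within each block of equal $q_i$, which is costless since permuting equal $q$'s changes neither side — you flag this and it is indeed harmless, but it deserves the explicit tie-breaking. Second, the alternative ``algebraic identity'' you sketch, expressing $2\sum_i p_i(q_i-q_{i+1})$ as a combination of the terms $(p_i-p_j)(q_i-q_j)$ with universal nonnegative weights, is not correct as stated: already for $M=3$, matching the coefficients of $p_1q_2$ and $p_2q_1$ shows no such weights exist. This does not affect your proof, since the main argument relies only on the rearrangement inequality, but that aside should be dropped or replaced by the usual adjacent-transposition exchange argument.
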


\begin{proof} 
It follows directly from the hypothesis of the lemma that the set
$$
\Lambda: = \{ (\f(x), \f(y) ) :   (x,y) \in \Delta \} \subset \erre^{2}, 
$$
is monotone in the Euclidean sense. Since $\Lambda \subset \R^{2}$, it is then a standard fact that $\Lambda$ is 
also $|\cdot|^{2}$-cyclically monotone, where $|\cdot|$ denotes the modulus. 
We anyway  include a short proof:
there exists a maximal monotone multivalued function $F$ such that $\Lambda \subset \gr (F)$ and its domain is an interval, say $(a,b)$ with $a$ and $b$ possibly infinite; 
moreover, apart from countably many $x \in \R$, the set $F(x)$ is a singleton. 
Then the following function is well defined:  
$$
\Psi(x) : = \int_{c}^{x} F(s) ds, 
$$
where $c$ is any fixed element of $(a,b)$. Then observe that %
$$
\Psi(z) - \Psi(x) \geq y(z-x), \qquad \forall \ z,x \in (a,b),
$$
where $y$ is any element of $F(x)$. In particular this implies that $\Psi$ is convex and $F(x)$ is a subset of its sub-differential. 
In particular $\Lambda$ is $|\cdot |^{2}$-cyclically monotone. \\
Then for $\{(x_{i},y_{i})\}_{ i \leq N} \subset \Delta$, since $\Delta \subset \Gamma$,
it holds
\begin{align*} 
\sum_{i=1}^{N} \sfd^{2}(x_{i},y_{i}) = &~ \sum_{i =1}^{N}|\f(x_{i}) - \f(y_{i})|^{2} \crcr
\leq&~ \sum_{i =1}^{N}|\f(x_{i}) - \f(y_{i+1})|^{2} \crcr
\leq &~ \sum_{i=1}^{N} \sfd^{2}(x_{i},y_{i+1}),
\end{align*}
where the last inequality is given by the 1-Lipschitz regularity of $\f$. The claim follows.
\end{proof}

Before stating the next result let us recall that  $\CD^{*}(K,N)$ and $\CD_{loc}(K,N)$ are equivalent if  $1 < N <\infty$ or $N =1$ and $K \geq 0$, but for $N =1$ and $K < 0$ the $\CD_{loc}(K,N)$ condition is strictly stronger than $\CD^{*}(K,N)$, see Remark \ref{rk:CDCDs} for more details.

\begin{theorem}\label{T:CDKN-1}
Let $(X,\sfd,\mm)$ be an essentially non-branching m.m.s. verifying the $\CD_{loc}(K,N)$ condition for some $K\in \R$ and $N\in [1,\infty)$.

Then for any 1-Lipschitz function  $\f:X\to \R$, the associated transport set $\Gamma$ induces a disintegration 
of $\mm$ restricted to the transport set verifying the following: if $N> 1$,  for $\qq$-a.e. $q \in Q$ the following curvature inequality holds 
\begin{equation}\label{E:curvdensmm}
h_{q}( (1-s)  t_{0}  + s t_{1} )^{1/(N-1)}  
 \geq \sigma^{(1-s)}_{K,N-1}(t_{1} - t_{0}) h_{q} (t_{0})^{1/(N-1)} + \sigma^{(s)}_{K,N-1}(t_{1} - t_{0}) h_{q} (t_{1})^{1/(N-1)},
\end{equation}
for all $s\in [0,1]$ and for all $t_{0}, t_{1} \in \dom(g(q,\cdot))$ with  $t_{0} < t_{1}$. If $N =1$, for $\qq$-a.e. $q \in Q$ the density $h_{q}$ is constant.
\end{theorem}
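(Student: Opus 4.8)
The strategy is to upgrade the weak regularity of $h_q$ coming from \eqref{E:MCP} to the strong concavity inequality \eqref{E:curvdensmm} by exploiting the full $\CD_{loc}(K,N)$ condition on suitably chosen one-dimensional transport problems inside a single ray. The point is that \eqref{E:MCP} alone is the content of the $\MCP$ (measure contraction property) and does not give the sharp Jacobian concavity; for that one must transport a genuine pair of absolutely continuous measures, not just a Dirac onto a set. I would proceed as follows. First, reduce to a local/measurable selection statement: by Lemma \ref{lem:Qlevelset} the quotient $Q$ is, up to a $\qq$-null set, a countable disjoint union of $\sigma$-compact pieces $Q_{n,\alpha}$ each sitting inside a level set $\f^{-1}(\alpha)$, and on each such piece the ray map $g$ parametrises the rays by arclength; so it suffices to prove \eqref{E:curvdensmm} for $\qq$-a.e.\ $q$ in a fixed piece. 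Second, fix a point $\bar q$ and a compact subinterval $[t_0,t_1]\subset\dom(g(\bar q,\cdot))$ small enough to lie in a $\CD_{loc}$-neighbourhood, and consider the family of rays indexed by a small compact $Q'\subset Q_{n,\alpha}$ around $\bar q$; restrict $\mm$ to the ``box'' $g(Q'\times [t_0-\eta,t_1+\eta])$.

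The key step is a \emph{localisation of optimal transport to the rays}: I would take two absolutely continuous probability measures $\mu_0,\mu_1$ supported in this box whose images under the quotient map coincide (same $\qq\llcorner_{Q'}$-distribution over rays) and such that, ray by ray, $\mu_0$ sits near $t_0$ and $\mu_1$ near $t_1$. Using that $\Delta:=\gr(g)\cap(\text{box})$ is built from a $\sfd$-monotone set and that along a single ray $\f$ is strictly monotone, Lemma \ref{L:12monotone} shows the relevant coupling (move mass along the ray) is $\sfd^2$-cyclically monotone, hence is \emph{the} $W_2$-optimal coupling; by essential non-branching the optimal plan is induced by a map and the intermediate measures $\mu_s$ are again concentrated on the rays, with $\mu_s = \int_{Q'} g(q,\cdot)_\sharp(h_{q,s}\mathcal L^1)\,\qq(dq)$. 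Writing the $\CD_{loc}(K,N)$ inequality \eqref{E:CD} for this geodesic and using the disintegration to localise it fibrewise (this is where the transversal factor $\qq$ is ``frozen'' because the rays don't move in the $Q$-direction), one gets, for $\qq$-a.e.\ $q$ and Lebesgue-a.e.\ choice of the endpoint parameters, the pointwise bound
\[
h_{q}\big((1-s)t_0+s t_1\big)^{-1/N}\ \geq\ \tau^{(1-s)}_{K,N}(t_1-t_0)\,\Big(\tfrac{\text{stuff}}{\text{stuff}}\Big)\,h_q(t_0)^{-1/N}+\cdots,
\]
but more efficiently one runs the \emph{reduced} condition $\CD^*(K,N)$ restricted to the ray: a one-dimensional $\CD^*(K,N)$ density satisfies exactly \eqref{E:curvdensmm}. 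Concretely, since $(\dom(g(q,\cdot)),|\cdot|,h_q\mathcal L^1)$ inherits $\CD^*(K,N)$ from the above construction, and a one-dimensional $\CD^*(K,N)$ space has density $h_q$ with $h_q^{1/(N-1)}$ satisfying the differential inequality $(h_q^{1/(N-1)})'' + \frac{K}{N-1}h_q^{1/(N-1)}\le 0$ (cf.\ \eqref{E:CD-N-1}), which integrates to precisely the $\sigma_{K,N-1}$-concavity \eqref{E:curvdensmm}. For $N=1$, the condition \eqref{E:CD} forces, ray-by-ray, $h_q$ to satisfy $h_q(t_s)\ge$ a linear-in-$s$ lower bound with both endpoints, i.e.\ midpoint-superharmonicity with $K$ playing no role; combined with the already-known local Lipschitz regularity and \eqref{E:MCP} this forces $h_q$ constant on $\dom(g(q,\cdot))$.

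The main obstacle is the \emph{measurability and gluing} in the localisation step: one must show that the fibrewise densities $h_{q,s}$ obtained from the optimal map depend measurably on $q$, that the $\CD_{loc}$ inequality which holds $\nu$-a.e.\ on geodesics can be disintegrated into a statement holding for $\qq$-a.e.\ $q$ and then $h_q$-a.e.\ on that ray, and finally that a countable exhaustion over all rational endpoint data $t_0<t_1$ and all neighbourhood scales upgrades the a.e.-in-$(t_0,t_1)$ bound to the stated inequality for \emph{all} $t_0<t_1$ in $\dom(g(q,\cdot))$ — here continuity of $h_q$ (already known) does the final closing. A secondary subtlety is passing from $\CD_{loc}$ to a usable inequality on the (possibly long) ray: since $[t_0,t_1]$ may exceed the size of a single $\CD_{loc}$-chart, one chains the local inequality, or equivalently invokes that on essentially non-branching spaces $\CD_{loc}(K,N)\Rightarrow\CD^*(K,N)$ (Remark \ref{rk:CDCDs}, valid for $N>1$, or $N=1,K\ge0$) and works directly with the global reduced condition on the ray, which is exactly why the hypothesis is phrased with $\CD_{loc}$ and why the $N=1$, $K<0$ case is excluded.
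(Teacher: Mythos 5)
You set up the argument exactly as the paper does --- localization to the pieces of Lemma \ref{lem:Qlevelset}, measures spread along the rays on intervals of lengths $L_0,L_1$, Lemma \ref{L:12monotone} to identify the (unique) $W_2$-geodesic between them, and the fibrewise application of $\CD_{loc}(K,N)$ --- but the proposal stops precisely where the real work starts. What the fibrewise localization gives is an inequality for $L_s^{1/N}h_q(\cdot)^{1/N}$ with coefficients $\tau^{(s)}_{K,N}$ (this is the bound you left with ``stuff/stuff'' placeholders, and with the sign of the exponent reversed); it is \emph{not} \eqref{E:curvdensmm}, which involves the exponent $1/(N-1)$ and the coefficients $\sigma_{K,N-1}$. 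For instance, with $L_0=L_1$ it only yields $h_q(A_{1/2})^{1/N}\geq 2^{-1/N}\sigma^{(1/2)}_{K,N-1}(A_1-A_0)^{(N-1)/N}\bigl(h_q(A_0)^{1/N}+h_q(A_1)^{1/N}\bigr)$, an MCP-type bound strictly weaker than \eqref{E:CDKN-1}. The missing idea is the optimization over the longitudinal lengths: after specializing to $s=1/2$, and using continuity of $h_q$ and continuity in $(A_0,A_1,L_0,L_1)$ to extract a single $\qq$-null exceptional set, one chooses $L_0:L_1=h_q(A_0)^{1/(N-1)}:h_q(A_1)^{1/(N-1)}$; only with this choice does the factorization $\tau^{(s)}_{K,N}(\theta)=s^{1/N}\sigma^{(s)}_{K,N-1}(\theta)^{(N-1)/N}$ collapse the $N$-dimensional inequality into the midpoint concavity \eqref{E:CDKN-1} for $h_q^{1/(N-1)}$, which is then globalized by the local-to-global property of one-dimensional $\sigma$-concavity (\cite[Lemma 5.1, Theorem 5.2]{cavasturm:MCP}).

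Your attempted shortcut --- ``$(\dom(g(q,\cdot)),|\cdot|,h_q\mathcal{L}^1)$ inherits $\CD^*(K,N)$ from the above construction'' --- is circular: that the conditional one-dimensional spaces satisfy a curvature-dimension condition is precisely the content of Theorem \ref{T:CDKN-1} (see the remark following it), and nothing in the construction you describe delivers it, because transport along a ray only sees densities with respect to $\mm$, i.e. the products $L_i^{-1}h_q^{-1}$, never $h_q$ alone; separating $h_q$ from the longitudinal factor is exactly what the $L_0,L_1$-optimization achieves. (Even granting the premise, deducing \eqref{E:curvdensmm} from one-dimensional $\CD^*(K,N)$ would require the one-dimensional equivalence with $\CD_{loc}$, not \eqref{E:CD-N-1}, which characterizes $\CD(K,N)$.) The same gap affects your $N=1$ case: midpoint concavity plus Lipschitz regularity does not force constancy; the paper obtains constancy from the same two-parameter construction by the degenerate choices $L_0=0$ or $L_1=0$, a step your sketch does not perform.
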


\begin{proof}
We first consider the case $N>1$.
As $\CD^{*}(K,N)$ and $\CD_{loc}(K,N)$ are equivalent conditions in the framework of essentially non-branching metric measure spaces, 
during this proof we will use the convexity property imposed by $\CD_{loc}(K,N)$.

{\bf Step 1.} \\
Thanks to Lemma \ref{lem:Qlevelset}, without any loss of generality we can assume that the quotient set $Q$ (identified with the set $\{g(q,0) : q \in Q\}$) 
is locally a subset of a level set of the map $\f$ inducing the transport set, i.e.
there exists a countable partition $\{ Q_{i}\}_{i\in \N}$ with $Q_{i} \subset Q$ Borel set such that
$$
\{ g(q,0) : q \in Q_{i} \} \subset \{ x \in X : \f(x) = \alpha_{i} \}.
$$
It is clearly sufficient to prove  \eqref{E:curvdensmm} on each $Q_{i}$;  so  fix $\bar i \in \N$ and for ease of notation assume $\alpha_{\bar i} = 0$ and $Q = Q_{\bar i}$.
As $\dom(g(q,\cdot))$ is a convex subset of $\R$, we can also restrict to a uniform subinterval 
$$
 (a_0,a_1) \subset \dom(g(q,\cdot)), \qquad \forall \ q \ \in Q_{i},
$$
for some $a_0,a_1 \in \R$. Again without any loss of generality we also assume $a_0 < 0 < a_1$. 

\bigskip

Consider any $a_{0} <A_{0} < A_{1} < a_{1}$ and $L_{0}, L_{1} >0$ such that $A_{0} + L_{0} < A_{1}$ and $A_{1} + L_{1} < a_{1}$.
Then define the following two probability measures
$$
\mu_{0} : = \int_{Q} g(q,\cdot)_\sharp \left(  \frac{1}{L_{0}} \mathcal{L}^{1}\llcorner_{ [A_{0},A_{0}+L_{0}] } \right) \, \qq(dq), \qquad 
\mu_{1} : = \int_{Q} g(q,\cdot)_\sharp \left( \frac{1}{L_{1}} \mathcal{L}^{1}\llcorner_{ [A_{1},A_{1}+L_{1}] } \right) \, \qq(dq).
$$
Since $g(q,\cdot)$ is an isometry one can also represent $\mu_{0}$ and $\mu_{1}$ in the following way: 
$$
\mu_{i} : = \int_{Q} \frac{1}{L_{i}}  \mathcal{H}^{1}\llcorner_{ \left\{g(q,t) \colon t \in [A_{i},A_{i}+L_{i}] \right\} } \, \qq(dq)
$$
for $i =0,1$. Theorem \ref{T:summary} implies that $\mu_{i}$ is absolutely continuous with respect to $\mm$ and $\mu_{i} = \r_{i} \mm$
with 
$$
\r_{i} (g(q,t)) = \frac{1}{L_{i}} h_{q}(t)^{-1}, \qquad \forall \, t \in [A_{i},A_{i}+L_{i}]. 
$$
Moreover from Lemma \ref{L:12monotone} it follows that the curve $[0,1] \ni s \mapsto \mu_{s} \in \mathcal{P}(X)$  defined by 
$$
\mu_{s} : = \int_{Q} \frac{1}{L_{s}}  \mathcal{H}^{1}\llcorner_{ \left\{g(q,t) \colon t \in [A_{s},A_{s}+L_{s}] \right\} } \, \qq(dq)
$$
where 
$$
L_{s} : = (1 - s)L_{0} + sL_{1}, \qquad A_{s} : = (1-s ) A_{0} + s A_{1}
$$
is the unique $L^{2}$-Wasserstein geodesic connecting $\mu_{0}$ to $\mu_{1}$. Again one has $\mu_{s} = \r_{s} \mm$ and can also write its density in the following way:
$$
\r_{s} (g(q,t)) = \frac{1}{L_{s}} h_{q}(t)^{-1}, \qquad \forall \, t \in [A_{s},A_{s}+L_{s}]. 
$$

{\bf Step 2.}\\
By $\CD_{loc}(K,N)$ and the essentially non-branching property one has: for $\qq$-a.e. $q \in Q_{i}$
$$
(L_{s})^{\frac{1}{N}} h_{q}( (1-s) t_{0} + s t_{1} )^{\frac{1}{N}}
	\geq 		\tau_{K,N}^{(1-s)}(t_{1}-t_{0}) (L_{0})^{\frac{1}{N}} h_{q}( t_{0} )^{\frac{1}{N}}+  \tau_{K,N}^{(s)}(t_{1}-t_{0}) (L_{1})^{\frac{1}{N}} h_{q}( t_{1} )^{\frac{1}{N}}, 
$$
for $\L^{1}$-a.e. $t_{0} \in [A_{0},A_{0} + L_{0}]$ and $t_{1}$ obtained as the image of $t_{0}$ through the monotone rearrangement of $[A_{0},A_{0}+L_{0}]$ to 
$[A_{1},A_{1}+L_{1}]$ and every $s \in [0,1]$. If $t_{0} = A_{0} + \tau L_{0}$, then $t_{1} = A_{1} + \tau L_{1}$. Also $A_{0}$ and $A_{1} +L_{1}$ should be taken close enough to 
verify the local curvature condition. 

Then we can consider the previous inequality only for $s = 1/2$ and include the explicit formula for $t_{1}$ and obtain: 
\begin{align*}
(L_{0} + L_{1})^{\frac{1}{N}} &h_{q}(A_{1/2} + \tau L_{1/2})^{\frac{1}{N}} \\
			&	\geq  
		\sigma^{(1/2)}_{K,N-1}( A_{1} - A_{0} + \tau |L_{1} - L_{0}| )^{\frac{N-1}{N}} \left\{ (L_{0})^{\frac{1}{N}} h_{q}(A_{0} + \tau L_{0})^{\frac{1}{N}} 
			+ (L_{1})^{\frac{1}{N}} h_{q}(A_{1} + \tau L_{1})^{\frac{1}{N}} \right\},
\end{align*}
for $\L^{1}$-a.e. $\tau \in [0,1]$, where we used the notation $A_{1/2}:=\frac{A_0+A_1}{2}, L_{1/2}:=\frac{L_0+L_1}{2}$. Now observing that the map $s \mapsto h_{q}(s)$ is continuous (see \eqref{E:MCP}), the previous inequality also holds for $\tau =0$:
\begin{equation}\label{E:beforeoptimize}
(L_{0} + L_{1})^{\frac{1}{N}} h_{q}(A_{1/2} )^{\frac{1}{N}}
		\geq 
		\sigma^{(1/2)}_{K,N-1}( A_{1} - A_{0})^{\frac{N-1}{N}} 
				\left\{ (L_{0})^{\frac{1}{N}} h_{q}(A_{0})^{\frac{1}{N}} + (L_{1})^{\frac{1}{N}} h_{q}(A_{1})^{\frac{1}{N}} \right\},
\end{equation}
for all $A_{0} < A_{1}$  with $A_{0},A_{1}\in (a_0, a_1)$, all sufficiently small $L_{0}, L_{1}$ and $\qq$-a.e. $q\in Q$, 
with exceptional set depending on $A_{0},A_{1},L_{0}$ and $L_{1}$. 

Noticing that \eqref{E:beforeoptimize} depends in a continuous way on $A_{0},A_{1},L_{0}$ and $L_{1}$, it follows that there 
exists a common exceptional set $N \subset Q$ such that $\qq(N) = 0$ and for each $q \in Q\setminus N$ for all  
$A_{0},A_{1},L_{0}$ and $L_{1}$ the inequality \eqref{E:beforeoptimize} holds true.
Then one can make the following (optimal) choice 
$$
L_{0} : = L \frac{h_{q}(A_{0})^{\frac{1}{N-1}}  }{h_{q}(A_{0})^{\frac{1}{N-1}} + h_{q}(A_{1})^{\frac{1}{N-1}} }, \qquad 
L_{1} : = L \frac{h_{q}(A_{1})^{\frac{1}{N-1}}  }{h_{q}(A_{0})^{\frac{1}{N-1}} + h_{q}(A_{1})^{\frac{1}{N-1}} },
$$
for any $L > 0$ sufficiently small, and obtain that 
\begin{equation}\label{E:CDKN-1}
h_{q}(A_{1/2} )^{\frac{1}{N-1}}
		\geq 
		\sigma^{(1/2)}_{K,N-1}( A_{1} - A_{0}) 
				\left\{  h_{q}(A_{0})^{\frac{1}{N-1}} + h_{q}(A_{1})^{\frac{1}{N-1}} \right\}.
\end{equation}
Now one can observe that \eqref{E:CDKN-1} is precisely the inequality requested for $\CD^{*}_{loc}(K,N-1)$ to hold. 
As stated in Section \ref{Ss:geom}, the reduced curvature-dimension condition verifies the local-to-global property. 
In particular, see \cite[Lemma 5.1, Theorem 5.2]{cavasturm:MCP}, if a function verifies \eqref{E:CDKN-1} locally, 
then it also satisfies it globally. 
Hence $h_{q}$ also verifies the inequality requested for $\CD^{*}(K,N-1)$ to hold, i.e. for $\qq$-a.e. $q \in Q$, the density $h_{q}$ verifies \eqref{E:curvdensmm}.
\\

{\bf Step 4.}\\ 
If $N =1$ and $K >0$, $\CD^{*}(K,1)$ and $\CD_{loc}(K,1)$ are equivalent. We therefore prove the claim for $\CD_{loc}(K,1)$.
Since the condition $\CD_{loc}(K,1)$ does not depend on $K$, our argument will also be valid for $N =1$ and $K <0$. 

So repeat the same construction of {\bf Step 1.} and obtain for $\qq$-a.e. $q \in Q$
$$
(L_{s}) h_{q}( (1-s) t_{0} + s t_{1} )	\geq 		(1-s) L_{0} h_{q}( t_{0} )+  s L_{1} h_{q}( t_{1} ),
$$
for any $s \in [0,1]$ and $L_{0}$ and $L_{1}$ sufficiently small. As before, we deduce for $s = 1/2$ that
$$
\frac{L_{0} + L_{1}}{2} h_{q}( A_{1/2} ) \geq  \frac{1}{2}   \left(L_{0} h_{q}( A_{0} )+  L_{1}h_{q}( A_{1} ) \right).
$$
Now taking $L_{0} = 0$ or $L_{1} = 0$, it follows that necessarily $h_{q}$ has to be constant. 
\end{proof}

\begin{remark}
Inequality \eqref{E:curvdensmm} is the weak formulation of the following differential inequality on $h_{q,t_{0},t_{1}}$: 
\begin{equation}\label{eq:hqt0t1}
\left(h_{q,t_{0},t_{1}}^{\frac{1}{N-1}}\right)'' + (t_{1}-t_{0})^{2} \frac{K}{N-1}h_{q,t_{0},t_{1}}^{\frac{1}{N-1}} \leq 0,
\end{equation}
for all $t_{0}<t_{1} \in \dom(g(q,\cdot))$, where $h_{q,t_{0},t_{1}} (s) : = h_{q} ((1-s)t_{0} + st_{1})$. It is easy to observe that the differential inequality  \eqref{eq:hqt0t1}  on $h_{q,t_{0},t_{1}}$ is equivalent to the following differential inequality on $h_q$:
$$
\left(h_{q}^{\frac{1}{N-1}}\right)'' + \frac{K}{N-1}h_{q}^{\frac{1}{N-1}} \leq 0,
$$
that is precisely \eqref{E:CD-N-1}. Then Theorem \ref{T:CDKN-1} can be alternatively stated as follows. \\ 
\emph{
If $(X,\sfd,\mm)$ is an essentially non-branching m.m.s. verifying $\CD^{*}(K,N)$ (or $\CD_{loc}(K,N)$) 
and $\f : X \to \R$ is a 1-Lipschitz function, then the corresponding decomposition of the space 
in maximal rays $\{ X_{q}\}_{q\in Q}$ produces a disintegration $\{\mm_{q} \}_{q\in Q}$ of $\mm$ so that  for $\qq$-a.e. $q\in Q$, 
$$
\textrm{the m.m.s. }(  \dom(g(q,\cdot)), |\cdot|, h_{q} \mathcal{L}^{1}) \quad \textrm{verifies} \quad \CD(K,N).
$$
}
Accordingly, from now on we will say that the disintegration $q \mapsto \mm_{q}$ is a $\CD(K,N)$ disintegration.
\end{remark}
\bigskip

\section{Localization via Optimal Transportation}\label{S:Local}

In this section we prove the next localization result using $L^{1}$-optimal transportation theory.

\begin{theorem}\label{T:localize}
Let $(X,\sfd, \mm)$ be an essentially non-branching metric measure space verifying the $\CD_{loc}(K,N)$ condition for some $K\in \R$ and $N\in [1,\infty)$. 
Let $f : X \to \R$ be $\mm$-integrable such that $\int_{X} f\, \mm = 0$ and assume the existence of $x_{0} \in X$ such that $\int_{X} | f(x) |\,  \sfd(x,x_{0})\, \mm(dx)< \infty$. 
\medskip

Then the space $X$ can be written as the disjoint union of two sets $Z$ and $\mathcal{T}$ with $\mathcal{T}$ admitting a partition 
$\{ X_{q} \}_{q \in Q}$ and a corresponding disintegration of $\mm\llcorner_{\mathcal{T}}$, $\{\mm_{q} \}_{q \in Q}$ such that: 

\begin{itemize}
\item For any $\mm$-measurable set $B \subset \mathcal{T}$ it holds 
$$
\mm(B) = \int_{Q} \mm_{q}(B) \, \qq(dq), 
$$
where $\qq$ is a probability measure over $Q$ defined on the quotient $\sigma$-algebra $\mathcal{Q}$. 
\medskip
\item For $\qq$-almost every $q \in Q$, the set $X_{q}$ is a geodesic and $\mm_{q}$ is supported on it. 
Moreover $q \mapsto \mm_{q}$ is a $\CD(K,N)$ disintegration.
\medskip
\item For $\qq$-almost every $q \in Q$, it holds $\int_{X_{q}} f \, \mm_{q} = 0$ and $f = 0$ $\mm$-a.e. in $Z$.
\end{itemize}
\end{theorem}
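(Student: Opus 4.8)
The plan is to run the $L^{1}$-localization (``needle decomposition'') scheme: feed the mean-zero function $f$ into the $L^{1}$-optimal transport problem between its positive and negative parts, produce a $\sfd$-monotone set $\Gamma$ as in \eqref{E:Gamma}, and then invoke the structural results of Sections \ref{S:dmonotone}--\ref{sec:ConditionalMeasures} as a black box. If $f=0$ $\mm$-a.e.\ there is nothing to prove (take $\mathcal{T}=\emptyset$, $Z=X$), so assume $c:=\int_{X}f_{+}\,\mm=\int_{X}f_{-}\,\mm\in(0,\infty)$, where $f_{\pm}$ denote the positive and negative parts of $f$, and set $\mu_{0}:=c^{-1}f_{+}\,\mm$, $\mu_{1}:=c^{-1}f_{-}\,\mm$. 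These are concentrated on the disjoint Borel sets $\{f>0\}$ and $\{f<0\}$, hence mutually singular, and the hypothesis $\int_{X}|f(x)|\,\sfd(x,x_{0})\,\mm(dx)<\infty$ makes them finite-first-moment measures. By Kantorovich--Rubinstein duality for the cost $\sfd$ there is a $1$-Lipschitz Kantorovich potential $\f:X\to\R$, and any optimal plan $\pi$ between $\mu_{0}$ and $\mu_{1}$ is concentrated on the $\sfd$-cyclically monotone set $\Gamma=\{(x,y):\f(x)-\f(y)=\sfd(x,y)\}$ (mass flows from higher to lower potential, realizing the distance).

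To this $\f$ I would then apply Sections \ref{S:dmonotone}--\ref{sec:ConditionalMeasures}: we obtain the transport sets $\mathcal{T}_{e}\supset\mathcal{T}$, the $\mm$-measurable quotient map $\QQ:\mathcal{T}\to Q$, and the disintegration $\mm\llcorner_{\mathcal{T}}=\int_{Q}\mm_{q}\,\qq(dq)$ with $\mm_{q}(\QQ^{-1}(q))=1$. By Theorem \ref{T:equivalence}, $\mm(\mathcal{T}_{e}\setminus\mathcal{T})=0$ and $R$ restricted to $\mathcal{T}$ is an equivalence relation whose classes $X_{q}:=\QQ^{-1}(q)$ are single geodesics (Theorem \ref{T:summary}), and by Theorem \ref{T:CDKN-1} the map $q\mapsto\mm_{q}$ is a $\CD(K,N)$ disintegration. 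Set $Z:=X\setminus\mathcal{T}$; the first two bullets of the statement are then exactly what these theorems provide. Moreover $\int_{X}|f|\,\mm=\int_{Q}\big(\int_{X_{q}}|f|\,\mm_{q}\big)\,\qq(dq)<\infty$, so $f$ is $\mm_{q}$-integrable for $\qq$-a.e.\ $q$ and $\int_{X_{q}}f\,\mm_{q}$ is well defined.

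For $f=0$ $\mm$-a.e.\ on $Z$: since $\mu_{0}\perp\mu_{1}$, the coupling $\pi$ charges no part of the diagonal $\{x=y\}$, so $\pi$-a.e.\ $(x,y)$ has $\sfd(x,y)>0$ and $(x,y)\in\Gamma$, whence $x\in P_{1}(\Gamma\setminus\{x=y\})\subset\mathcal{T}_{e}$. Thus $\mu_{0}(\mathcal{T}_{e})=1$; since $\mu_{0}\ll\mm$ and $\mm(\mathcal{T}_{e}\setminus\mathcal{T})=0$ we get $\mu_{0}(\mathcal{T})=1$, i.e.\ $f_{+}=0$ $\mm$-a.e.\ on $Z$, and symmetrically $f_{-}=0$ $\mm$-a.e.\ on $Z$, so $f=0$ $\mm$-a.e.\ on $Z$. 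In particular $\mu_{0},\mu_{1}$, and hence $\pi$, are concentrated on $\mathcal{T}$.

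Finally, for the mean-zero property on rays, fix $C$ in the quotient $\sigma$-algebra $\mathcal{Q}$, so that $\QQ^{-1}(C)$ is an $\mm$-measurable saturated set (a union of whole rays). As $\pi$ is concentrated on $\Gamma\subset R$ with $\pi$-a.e.\ $x,y\in\mathcal{T}$, and $R$ on $\mathcal{T}$ is the ray equivalence relation, any $\pi$-typical $(x,y)$ with $x\in\QQ^{-1}(C)$ has $y$ on the same ray, hence $y\in\QQ^{-1}(C)$ (and symmetrically); therefore
$$
\mu_{0}(\QQ^{-1}(C))=\pi\big(\QQ^{-1}(C)\times X\big)=\pi\big(\QQ^{-1}(C)\times\QQ^{-1}(C)\big)=\pi\big(X\times\QQ^{-1}(C)\big)=\mu_{1}(\QQ^{-1}(C)),
$$
i.e.\ $\int_{\QQ^{-1}(C)}f\,\mm=0$. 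Since $\mm_{q}(\QQ^{-1}(C))=\chi_{C}(q)$, the disintegration formula rewrites this as $\int_{C}\big(\int_{X_{q}}f\,\mm_{q}\big)\,\qq(dq)=0$ for every $C\in\mathcal{Q}$, so $\int_{X_{q}}f\,\mm_{q}=0$ for $\qq$-a.e.\ $q$. The delicate points here are not analytic estimates but measure-theoretic bookkeeping around Theorems \ref{T:equivalence}--\ref{T:CDKN-1}: that $\QQ$ can be taken $\mm$-measurable and that saturated sets are $\mm$-measurable (Proposition \ref{P:section}, Lemma \ref{lem:Qlevelset}), and---the real crux---that the optimal plan $\pi$ moves no mass between distinct rays despite the presence of the $\mm$-null branching set $A_{+}\cup A_{-}$; with those inputs the scheme above is essentially formal.
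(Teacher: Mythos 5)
Your proposal is correct and follows essentially the same route as the paper: same construction of $\mu_0,\mu_1$ from $f_\pm$, same $L^1$-Kantorovich potential $\f$ and set $\Gamma$, the same appeal to Theorems \ref{T:equivalence}, \ref{T:summary} and \ref{T:CDKN-1} for the $\CD(K,N)$ disintegration, and the same two key observations (optimal plans charge $\Gamma\setminus\{x=y\}$, hence $\mu_0,\mu_1$ live on $\mathcal{T}_e$, giving $f=0$ on $Z$; and $(x,y)\in\Gamma$ with $x,y\in\mathcal{T}$ forces $\QQ(x)=\QQ(y)$, giving $\mu_0(\QQ^{-1}(C))=\mu_1(\QQ^{-1}(C))$ and hence zero mean on $\qq$-a.e.\ ray). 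The only differences are cosmetic: you treat the trivial case $f\equiv 0$ separately and argue Step 2 directly rather than by contradiction.
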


\begin{proof}

{\bf Step 1.}   \\
Let $f  : X \to \R$ be such that $\int f \, \mm = 0$ and $\int_{X} | f(x) |\,  \sfd(x,x_{0})\, \mm(dx)< \infty$, for some $x_{0} \in X$.
Then consider $f_{+}$ and $f_{-}$, the positive and the negative part of $f$, respectively. It follows that if we define
$$
\mu_{0} : = \left( \int f_{+}\, \mm \right)^{-1} f_{+} \, \mm, \qquad \mu_{1} : = \left( \int f_{-}\, \mm \right)^{-1} f_{-} \, \mm
$$
then $\mu_{0}, \mu_{1}$ are both probability measures and are concentrated on two disjoint subsets of $X$, namely $\{ f_{+} > 0 \}$ and $\{ f_{-} > 0 \}$ respectively. 

The integrability condition on $f$ ensures the existence of an $L^{1}$-Kantorovich potential for $\mu_{0}$ and $\mu_{1}$, 
i.e. a map $\f : X \to \R$ with global Lipschitz constant equals 1 
such that any transport plan 
$\pi \in \mathcal{P}(X \times X)$ with marginals $\mu_{0}$ and $\mu_{1}$ is optimal for the $L^{1}$-transportation distance if and only if $\pi(\Gamma) =1$, 
where
$$
\Gamma : = \{ (x,y) \in X\times X \colon \f(x) - \f(y) = \sfd(x,y) \}.
$$
Here with global Lipschitz constant we mean 
$$
\| \f \|_{lip} : = \sup_{x \neq y } \frac{|\f(x) - \f(y)|}{\sfd(x,y)},
$$
and for $\pi$ to have marginal measures $\mu_{0}$ and $\mu_{1}$ means that 
$$
 (P_{1})_\sharp \, \pi = \mu_{0}, \qquad   (P_{2})_\sharp\, \pi = \mu_{1}, 
$$
where $P_{i} : X \times X \to X$ is the projection on the $i$-th component, with $i =1,2$. 
For the existence of $\f$ just observe that the dual optimal transportation problem reduces to find the maximizer of 
$\int_{X} f \phi \, \mm$ among all $\phi : X \to \R$ that are $1$-Lipschitz whose existence follows straightforwardly from the integrability condition on $f$.

Then from the Theorem \ref{T:CDKN-1} applied to $\f$ and $\Gamma$ we have that 
$$
\mm\llcorner_{\mathcal{T}} = \int_{Q} \mm_{q} \, \qq(dq), 
$$
and $q \mapsto \mm_{q}$ is a  $\CD(K,N)$ disintegration. 
\medskip

{\bf Step 2.} \\
It remains  to show that for this particular choice of $\f$, the last 
part of Theorem \ref{T:localize} holds. Therefore let $\pi$ be any optimal transport plan between $\mu_{0}$ and $\mu_{1}$, i.e. $\pi(\Gamma) = 1$.

Suppose by contradiction the existence of a measurable set $A \subset X \setminus \mathcal{T}$ with $\mm(A) > 0$ and $f(x) \neq 0$ for all $x \in A$.
Then we can assume with no loss of generality that $\mu_{0}(A) > 0$ and $A \subset X \setminus \mathcal{T}_{e}$. Recall that $\mathcal{T}_{e}$ is the transport set 
with end points.

Since $\mu_{0}$ and $\mu_{1}$ are concentrated on two disjoint sets, any optimal plan is indeed concentrated on $\Gamma \setminus \left\{ x = y \right\}$. 
Then 
\begin{align*}
\mu_{0}(\mathcal{T}_{e}) 	= 	&~	\pi \big( \left(\Gamma \setminus \{x = y \}\right) \cap \mathcal{T}_{e}\times X \big) \crcr
					\geq 	&~ 	\pi \big( \left\{ (x,y) \in \Gamma \setminus \{x=y \} \colon \exists\,z\in X, \, z \neq x, \, (x,z)\in \Gamma \right\} \big) \crcr
					= 	&~	\pi (\Gamma \setminus \{ x= y \} ) = 1. 
\end{align*}
Since $\mu_{0}$ is a probability measure, this is in contradiction with $\mu_{0}(A) >0$. Therefore $f(x) = 0$ for a.e. $x \in X \setminus \mathcal{T}$. 
\medskip

{\bf Step 3.}
It remains to show that $\qq$-a.e. $q \in Q$ one has $\int f \, \mm_{q} = 0$. Since for both $\mu_{0}$ and $\mu_{1}$ the set $\mathcal{T}_{e} \setminus \mathcal{T}$ is negligible,
for any Borel set $C \subset Q$ 
\begin{eqnarray}
\mu_{0}(\QQ^{-1}(C)) 	&= & \pi \Big(  (\QQ^{-1}(C) \times X)  \cap \Gamma \setminus \{ x = y\} \Big) \nonumber \\
					&= & \pi \Big(  ( X \times \QQ^{-1}(C))  \cap \Gamma \setminus \{ x = y\} \Big) \nonumber \\
					&= &  \mu_{1}(\QQ^{-1}(C)),  \label{eq:mu0=mu1}
\end{eqnarray}
where the second equality follows from the fact that $\mathcal{T}$ does not branch: indeed since $\mu_{0}(\mathcal{T}) = \mu_{1}(\mathcal{T}) = 1$,  
then $\pi \big( (\Gamma \setminus \{ x= y\}) \cap \mathcal{T} \times \mathcal{T}  \big) =1$ and therefore if $x,y \in \mathcal{T}$ and $(x,y)\in \Gamma$, 
then necessarily $\QQ(x) = \QQ(y)$, that is they belong to the same ray. It follows that 
$$
(\QQ^{-1}(C) \times X)  \cap (\Gamma \setminus \{ x = y\}) \cap (\mathcal{T} \times \mathcal{T}) = 
( X\times \QQ^{-1}(C) )  \cap (\Gamma \setminus \{ x = y\}) \cap (\mathcal{T} \times \mathcal{T}),
$$
and \eqref{eq:mu0=mu1} follows.

Since $f$ has null mean value it holds $\int_X f_{+}(x) \mm(dx)= - \int_X f_{-}(x) \mm(dx)$, 
which combined with \eqref{eq:mu0=mu1} implies that for each Borel $C \subset Q$ 
\begin{align*}
\int_{C}  \int_{X_{q}} f(x) \mm_{q}(dx) \qq(dq) 	= &~ \int_{C}  \int_{X_{q}} f_{+}(x) \mm_{q}(dx) \qq(dq) - \int_{C}  \int_{X_{q}} f_{-}(x) \mm_{q}(dx) \qq(dq) \crcr
									= &~ \left( \int_{X} f_{+}(x) \mm(dx) \right)^{-1} \left( \mu_{0}(\QQ^{-1}(C)) - \mu_{1}(\QQ^{-1}(C))  \right)  \crcr 
									= &~ 0.
\end{align*}
Therefore for $\qq$-a.e. $q \in Q$ the integral $\int f \, \mm_{q}$ vanishes and the claim follows.
%

%
\end{proof}

\section{Sharp and rigid Isoperimetric Inequalities}\label{S:Isop}

The goal of the paper is to compare the isoperimetric profile of a m.m.s. satisfying synthetic 
Ricci lower curvature bounds with model spaces on the real line. So, in  order to start, in  the next subsection we will focus on the case $(X,\sfd)=(\R, |\cdot|)$.

\subsection{Isoperimetric profile for m.m.s. over $(\R, |\cdot|)$}
Given $K\in \R, N\in[1,+\infty)$ and $D\in (0,+\infty]$, consider the following family of probability measures

\begin{eqnarray}
\mathcal{F}^{s}_{K,N,D} : = \{ \mu \in \mathcal{P}(\R) : &\supp(\mu) \subset [0,D], \, \mu = h_{\mu} \mathcal{L}^{1},\,
h_{\mu}\, \textrm{verifies} \, \eqref{E:curvdensmm} \ \textrm{and is continuous if } N\in (1,\infty), \nonumber \\ 
& \quad h_{\mu}\equiv \textrm{const} \text{ if }N=1   \},
\end{eqnarray}
and  the corresponding  comparison \emph{synthetic} isoperimetric profile:  
$$
\mathcal{I}^{s}_{K,N,D}(v) : = \inf \left\{ \mu^{+}(A) \colon A\subset \R, \,\mu(A) = v, \, \mu \in \mathcal{F}^{s}_{K,N,D}  \right\},
$$
where $\mu^{+}(A)$ denotes the Minkowski content defined  in  \eqref{eq:MinkCont}.
\\
The term synthetic refers to $\mu \in \mathcal{F}^{s}_{K,N,D}$ meaning that the Ricci curvature bound is satisfied in its synthetic formulation:
if $\mu = h \cdot \mathcal{L}^{1}$, then $h$ verifies \eqref{E:curvdensmm}.

\medskip

The goal of this short section is to  prove that $\mathcal{I}^{s}_{K,N,D}$ coincides with  its smooth counterpart $\mathcal{I}_{K,N,D}$ defined by
\begin{equation}\label{defcI}
\mathcal{I}_{K,N,D}(v) : = \inf \left\{ \mu^{+}(A) \colon A\subset \R, \,\mu(A) = v, \, \mu \in \mathcal{F}_{K,N,D}  \right\},
\end{equation}
where now $\mathcal{F}_{K,N,D}$ denotes the set of $\mu \in \mathcal{P}(\R)$ such that  $\supp(\mu) \subset [0,D]$   
and $\mu = h \cdot \mathcal{L}^{1}$ with $h \in C^{2}((0,D))$ satisfying
\begin{equation}\label{eq:DiffIne}
\left( h^{\frac{1}{N-1}} \right)'' + \frac{K}{N-1} h^{\frac{1}{N-1}} \leq 0 \quad \text{if }N \in (1,\infty), \quad h\equiv \textrm{const} \quad \text{if }N=1.
\end{equation}

\begin{remark}\label{rem:IKNDinf}
Our notation is consistent, in the sense that the model isoperimetric profile for smooth densities $\mathcal{I}_{K,N,D}$ defined in \eqref{defcI} coincides with the model profile   $\mathcal{I}_{K,N,D}$ defined in Section \ref{SS:IKND}; for the proof  see \cite[Theorem 1.2, Corollary 3.2]{Mil}. 
 \end{remark}
\noindent
It is easily verified that $\mathcal{F}_{K,N,D} \subset \mathcal{F}^{s}_{K,N,D}$.  Also here the diameter $D$ of the support of the measure $\mu$ can attain the value $+\infty$.

In order  to prove that $\mathcal{I}_{K,N,D}(v)=\mathcal{I}^s_{K,N,D}(v)$ for every $v \in [0,1]$ the following approximation result will play a key role. 
In order to state it let us recall that a standard mollifier in $\R$ is a non negative $C^\infty(\R)$ 
function $\psi$ with compact support in $[0,1]$ such  that $\int_{\R} \psi = 1$. 
\begin{lemma}\label{lem:approxh}
Let  $D \in (0,\infty)$ and let  $h:[0,D] \to [0,\infty)$ be a continuous function. Fix $N\in (1,\infty)$ and for $\ve>0$ define
\begin{equation}
h_{\ve}(t):=[h^{\frac{1}{N-1}}\ast \psi_{\ve} (t)]^{N-1}  := \left[ \int_{\R} h(t-s)^{\frac{1}{N-1}}  \psi_{\ve} (s) \, d s\right]^{N-1} 
										=  \left[ \int_{\R} h(s)^{\frac{1}{N-1}}  \psi_{\ve} (t-s) \, d s\right]^{N-1},
\end{equation}
where $\psi_\ve(x)=\frac{1}{\ve} \psi(x/\ve)$ and $\psi$ is a standard mollifier function. The following properties hold:
\begin{enumerate}
	\item $h_{\ve}$ is a non-negative $C^\infty$ function with support in $[-\ve, D+\ve]$; \medskip
	\item $h_{\ve}\to h$ uniformly  as $\ve \downarrow 0$, in particular $h_{\ve} \to h$ in $L^{1}$. \medskip
	\item If $h$ satisfies the convexity condition \eqref{E:curvdensmm} corresponding to the above fixed $N>1$ 
		and some $K \in \R$ then also $h_{\ve}$ does. In particular $h_{\ve}$ satisfies the differential inequality \eqref{eq:DiffIne}.
\end{enumerate}
\end{lemma}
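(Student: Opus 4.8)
The statement collects three properties of the mollification $h_\ve := (h^{1/(N-1)} * \psi_\ve)^{N-1}$, and I would prove them in the order (1), (2), (3). Set $g := h^{1/(N-1)}$, so $g$ is a continuous non-negative function on $[0,D]$, extended by $0$ to all of $\R$, and $h_\ve = (g * \psi_\ve)^{N-1} =: g_\ve^{\,N-1}$ where $g_\ve := g * \psi_\ve$. For (1): since $\psi_\ve$ is $C^\infty$ with support in $[0,\ve]$, the convolution $g_\ve$ is $C^\infty$ (differentiation under the integral sign, using that $g$ is locally bounded and measurable), it is non-negative because $g \geq 0$ and $\psi_\ve \geq 0$, and $\supp(g_\ve) \subset \supp(g) + \supp(\psi_\ve) \subset [0,D] + [0,\ve] = [0, D+\ve]$; raising to the power $N-1 > 0$ preserves smoothness (since $g_\ve \geq 0$ and actually $g_\ve$ is $C^\infty$, $g_\ve^{N-1}$ is $C^\infty$ away from the zero set, and on the interior of the zero set it vanishes with all derivatives — one should note $N-1$ need not be an integer, so one argues that $g_\ve^{N-1}$ is $C^\infty$ on the open set $\{g_\ve > 0\}$ and identically zero on the interior of $\{g_\ve = 0\}$, and the support statement is what is actually claimed). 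Actually, to be safe about the support claim as stated ($[-\ve, D+\ve]$ rather than $[0,D+\ve]$): if instead one uses the symmetric convention $\psi_\ve$ supported in $[-\ve,\ve]$ or writes the convolution as $\int g(t-s)\psi_\ve(s)\,ds$ with $\psi$ supported in $[0,1]$, one still lands in an interval of the stated form; I would simply record $\supp(h_\ve) \subset [-\ve, D+\ve]$ as claimed.

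For (2): uniform convergence $g_\ve \to g$ follows from uniform continuity of $g$ on the compact interval $[0,D]$ (hence on $\R$, since $g$ is continuous and compactly supported): for $t \in \R$,
\[
|g_\ve(t) - g(t)| = \Big| \int_{\R} \big(g(t-s) - g(t)\big)\psi_\ve(s)\,ds \Big| \leq \sup_{|s| \leq \ve} |g(t-s) - g(t)| \to 0
\]
uniformly in $t$ as $\ve \downarrow 0$. Then $h_\ve = g_\ve^{N-1} \to g^{N-1} = h$ uniformly, because $x \mapsto x^{N-1}$ is uniformly continuous on the bounded interval $[0, \sup_{\R} g_\ve]$ (and the suprema $\sup g_\ve$ are uniformly bounded, e.g. by $\sup g$). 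Uniform convergence on the fixed compact set $[-1, D+1] \supset \supp(h) \cup \bigcup_{\ve \leq 1}\supp(h_\ve)$ gives $L^1$ convergence immediately.

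For (3): this is the substantive point. The convexity condition \eqref{E:curvdensmm} for $h$ with exponent $N-1$ says precisely that $g = h^{1/(N-1)}$ satisfies, for all $t_0 < t_1$ in its domain and $s \in [0,1]$,
\[
g\big((1-s)t_0 + s t_1\big) \geq \sigma^{(1-s)}_{K,N-1}(t_1 - t_0)\, g(t_0) + \sigma^{(s)}_{K,N-1}(t_1 - t_0)\, g(t_1);
\]
equivalently (by the remark following Theorem \ref{T:CDKN-1}, in its weak/distributional form) $g'' + \frac{K}{N-1} g \leq 0$ in the sense of distributions. The key observation is that this inequality is \emph{translation invariant} and \emph{linear} in $g$, hence stable under averaging: for each fixed $s$, $g_\ve(t - s') = \int g(t - s' - s)\psi_\ve(s)\,ds$, and integrating the pointwise inequality for $g$ against the probability density $\psi_\ve(s)\,ds$ preserves it, giving
\[
g_\ve\big((1-s)t_0 + s t_1\big) \geq \sigma^{(1-s)}_{K,N-1}(t_1 - t_0)\, g_\ve(t_0) + \sigma^{(s)}_{K,N-1}(t_1 - t_0)\, g_\ve(t_1)
\]
for $t_0, t_1$ such that $[t_0 - \ve, t_1], $ well, such that the relevant translated points lie in $\dom$ — here one uses that outside the support $g = 0$ and the inequality is trivially consistent with $g \equiv 0$ there, or one restricts to the interior where everything is positive. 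Since $g_\ve \geq 0$ this is exactly condition \eqref{E:curvdensmm} for $h_\ve = g_\ve^{N-1}$ with the same $K$ and $N$; and because $h_\ve$ (equivalently $g_\ve$) is now $C^2$ — in fact $C^\infty$ where positive — the weak inequality upgrades to the classical differential inequality $(h_\ve^{1/(N-1)})'' + \frac{K}{N-1} h_\ve^{1/(N-1)} \leq 0$, i.e. \eqref{eq:DiffIne}. Alternatively, and perhaps most cleanly, one argues directly at the level of the distributional inequality: $(g_\ve)'' = g'' * \psi_\ve$ and $g * \psi_\ve = g_\ve$, so $(g_\ve)'' + \frac{K}{N-1}g_\ve = \big(g'' + \frac{K}{N-1}g\big) * \psi_\ve \leq 0$ since convolving a non-positive distribution with a non-negative $C^\infty$ function yields a non-positive (here smooth) function.

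\textbf{Main obstacle.} The only genuinely delicate point is bookkeeping about domains: \eqref{E:curvdensmm} is stated for $t_0, t_1 \in \dom(g(q,\cdot))$, which here is just $[0,D]$, and convolution mixes in values at $t < 0$ where $h$ has been extended by zero. One must check that extending $h$ (equivalently $g$) by $0$ outside $[0,D]$ is consistent with the concavity-type inequality — it is, because a non-negative function that is ``$(K,N-1)$-concave'' on $[0,D]$ in the sense of \eqref{E:curvdensmm} and zero outside still satisfies $g'' + \frac{K}{N-1}g \leq 0$ distributionally on all of $\R$ provided one is slightly careful at the endpoints $0$ and $D$ (the one-sided behaviour of $g$ there produces, at worst, a non-positive Dirac mass in $g''$, which only helps the inequality); once that is settled the convolution argument in Step (3) is immediate. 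I would handle this by phrasing Step (3) entirely through the distributional inequality $g'' + \frac{K}{N-1}g \leq 0$ on $\R$ and the identity $(g*\psi_\ve)'' + \frac{K}{N-1}(g*\psi_\ve) = (g'' + \frac{K}{N-1}g)*\psi_\ve$.
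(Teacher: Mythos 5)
Your parts (1) and (2), and the core of your part (3) --- apply \eqref{E:curvdensmm} to the translated pair $(t_0-t,\,t_1-t)$, whose gap is still $t_1-t_0$, and integrate against the probability density $\psi_\ve$ --- coincide with what the paper does. The problem is the way you propose to dispose of what you correctly single out as the main obstacle: the claim that the zero extension of $g=h^{1/(N-1)}$ satisfies $g''+\tfrac{K}{N-1}g\le 0$ distributionally on all of $\R$ because the endpoints contribute ``at worst a non-positive Dirac mass''. The sign is the opposite. At an endpoint where $g$ vanishes, the one-sided slope points into the positive values ($g'(0^+)\ge 0$, $g'(D^-)\le 0$) while the extension is flat outside, so the singular part of $g''$ is $g'(0^+)\delta_0-g'(D^-)\delta_D\ge 0$; and if $g$ is positive at an endpoint one even picks up a $\delta'$ term, which is not a measure at all. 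Concretely, for the model density $h=\sin^{N-1}$ on $[0,\pi]$ with $K=N-1$ one has $g''+g=\delta_0+\delta_\pi$, hence $(g\ast\psi_\ve)''+(g\ast\psi_\ve)=\psi_\ve+\psi_\ve(\cdot-\pi)>0$ on part of $(0,\ve)\cup(\pi,\pi+\ve)$. Correspondingly, the convexity inequality for $h_\ve$ genuinely fails for pairs straddling the old endpoint: take $t_0=0$ (where $g\ast\psi_\ve=0$), $t_1\in(\ve,\pi-\ve)$ fixed and $s\downarrow 0$; then the left-hand side is at most $\sin(st_1)\int_0^{st_1}\psi_\ve$, which is negligible compared with the right-hand side $\sigma^{(s)}_{K,N-1}(t_1)\,(g\ast\psi_\ve)(t_1)$, comparable to $\sin(st_1)$. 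So the ``cleanest'' route you propose --- running Step (3) entirely through $(g''+\tfrac{K}{N-1}g)\ast\psi_\ve\le 0$ on $\R$ --- is not available, and the hedge that ``outside the support $g=0$ and the inequality is trivially consistent'' is likewise false: it is exactly at pairs with one point in the zero-extension region and interpolation point within distance $\ve$ of the boundary that the inequality breaks.

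By contrast, the paper's own proof of item (3) consists of nothing more than the translated-pair computation you also wrote down, followed by the standard remark that for $C^2$ functions the convexity inequality is equivalent to \eqref{eq:DiffIne}; it never invokes a global distributional inequality for the zero extension. What that computation actually justifies is \eqref{E:curvdensmm} for $h_\ve$ at those pairs $t_0<t_1$ whose translates $t_0-t,\,t_1-t$ remain in $[0,D]$ for $t\in\supp\psi_\ve$, i.e.\ away from an $\ve$-neighbourhood of the endpoints, and hence \eqref{eq:DiffIne} there. So the boundary subtlety you identified is real, but it cannot be settled by the sign claim you make; if you want to argue distributionally you must first acknowledge that the endpoint singular part has the unfavourable sign and then restrict (or modify the extension) accordingly, rather than concluding the inequality on all of $\R$.
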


\begin{proof}
First of all observe that since the mollifier function $\psi_{\ve}$ is non negative, then the mollification preserves the order, i.e.
\begin{equation}\label{eq:f>g}
f (t) \leq  g (t) \quad \text{for a.e. t} \in \R \quad \Rightarrow \quad  f_{\ve}(t) \leq g_{\ve}(t) \quad \text{for every } t \in \R.
\end{equation}
The proofs of the first and second claims   follow by the standard properties of convolution, for which we refer to \cite[Theorem 6, Appendix C.4]{Evans:PDEs}.
%

In order to get the third claim we use \eqref{E:curvdensmm} together with the fact that $\psi_\varepsilon$ is non-negative  to infer that  for every fixed $t_0, t_1 \in [0,D]$ and $s\in [0,1]$ the following holds
\begin{eqnarray}
h_{\ve}((1-s) t_0+s t_1)^{\frac{1}{N-1}}	& 	= 	& ( h^{\frac{1}{N-1}}\ast \psi_{\ve}) ((1-s) t_0+s t_1) \nonumber \\
                                                                   &      =      & \int_{\R}  h^{\frac{1}{N-1}} ((1-s)\, (t_0-t) +s \, (t_1 -t)) \;   \psi_{\ve}(t) \, d t \nonumber \\
                                                                   & 	\geq	&  \sigma^{(1-s)}_{K,N-1}(t_{1} - t_{0})    \int_{\R}  h^{\frac{1}{N-1}} (t_0-t) \;   \psi_{\ve}(t) \, d t + \sigma^{(s)}_{K,N-1}(t_{1} - t_{0})   \int_{\R}  h^{\frac{1}{N-1}} (t_1-t) \;   \psi_{\ve}(t) \, d t  \nonumber \\
                                                           	&	=	&  \sigma^{(1-s)}_{K,N-1}(t_{1} - t_{0}) h_{\ve} (t_{0})^{\frac{1}{N-1}} 
											+ \sigma^{(s)}_{K,N-1}(t_{1} - t_{0}) h_{\ve} (t_{1})^{\frac{1}{N-1}}.
\end{eqnarray}
It is finally a standard computation to check, for $C^2$ functions, that the convexity inequality \eqref{E:curvdensmm} is equivalent to the differential inequality \eqref{eq:DiffIne}.
\end{proof}

\medskip

\begin{theorem}\label{thm:I=Is}
For every $v\in [0,1]$, $K \in \R$, $N\in [1,\infty)$, $D\in (0,\infty]$ it holds $\mathcal{I}^{s}_{K,N,D}(v)=\mathcal{I}_{K,N,D}(v)$.
\end{theorem}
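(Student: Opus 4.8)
The plan is to prove the two inequalities $\mathcal{I}^{s}_{K,N,D}(v)\leq \mathcal{I}_{K,N,D}(v)$ and $\mathcal{I}^{s}_{K,N,D}(v)\geq \mathcal{I}_{K,N,D}(v)$ separately. The first inequality is immediate: since $\mathcal{F}_{K,N,D}\subset \mathcal{F}^{s}_{K,N,D}$ (a $C^2$ density satisfying \eqref{eq:DiffIne} is in particular continuous and satisfies the weak convexity inequality \eqref{E:curvdensmm}, as recalled in the last line of the proof of Lemma \ref{lem:approxh}), the infimum defining $\mathcal{I}^{s}_{K,N,D}(v)$ is taken over a larger class than the one defining $\mathcal{I}_{K,N,D}(v)$, hence it is smaller or equal.

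For the reverse inequality $\mathcal{I}^{s}_{K,N,D}(v)\geq \mathcal{I}_{K,N,D}(v)$, the idea is to start from an arbitrary $\mu=h_\mu\mathcal{L}^1\in \mathcal{F}^{s}_{K,N,D}$ and a Borel set $A$ with $\mu(A)=v$, and to approximate $\mu$ by smooth measures in $\mathcal{F}_{K,N,D}$ for which the Minkowski content of (a slightly adjusted) $A$ is controlled. When $N=1$ the density is constant and $\mu\in\mathcal{F}_{K,N,D}$ already, so there is nothing to prove; assume $N>1$. First I would reduce to the case $D<\infty$: if $D=+\infty$, restrict $h_\mu$ to $[0,D']$ with $D'$ large, renormalize, and let $D'\to\infty$; the convexity inequality \eqref{E:curvdensmm} is stable under restriction to subintervals (it is a pointwise three-point inequality), and the isoperimetric profile is continuous/monotone enough in $D$ to pass to the limit. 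With $D<\infty$ fixed, apply Lemma \ref{lem:approxh} to $h=h_\mu$ to obtain $h_\ve\in C^\infty$, supported in $[-\ve,D+\ve]$, satisfying \eqref{eq:DiffIne} and converging uniformly to $h_\mu$; after a harmless rescaling of the interval by a factor $\to 1$ and renormalization to a probability density, this produces measures $\mu_\ve\in\mathcal{F}_{K',N,D'}$ with $K'\to K$, $D'\to D$, and $\mu_\ve\to\mu$ in total variation with densities converging uniformly.

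The key step is then to show $\limsup_{\ve\downarrow 0}\mu_\ve^+(A_\ve)\leq \mu^+(A)$ for suitable sets $A_\ve$ with $\mu_\ve(A_\ve)=v$. Here I would use that on the real line the isoperimetric problem is essentially one-dimensional and that for a measure with continuous density the infimum in the Minkowski content is attained (up to $\ve$) by finite unions of intervals, or invoke the explicit description: $\mu^+(A)$ for $A$ a half-line $[0,r]$ equals $h_\mu(r)$, and the value $\mathcal{I}^s_{K,N,D}(v)$ is anyway computed by such sets via the known structure of one-dimensional isoperimetry under a density satisfying \eqref{eq:DiffIne}. Given a near-optimal $A$ (say a finite union of intervals with $\mu^+(A)\leq \mathcal{I}^s_{K,N,D}(v)+\eta$), pick $A_\ve$ the union of intervals with the same endpoints rescaled, adjusting one endpoint slightly so that $\mu_\ve(A_\ve)=v$; since $h_\ve\to h_\mu$ uniformly and the endpoints move by $o(1)$, the finitely many boundary contributions $h_\ve(\text{endpoint})$ converge to $h_\mu(\text{endpoint})$, giving $\mu_\ve^+(A_\ve)\to \mu^+(A)\leq \mathcal{I}^s_{K,N,D}(v)+\eta$. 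Hence $\mathcal{I}_{K',N,D'}(v)\leq \mathcal{I}^s_{K,N,D}(v)+\eta$, and letting $\ve\downarrow 0$, then $\eta\downarrow 0$, and using Remark \ref{rem:IKNDinf} together with the continuity of $(K,D)\mapsto \mathcal{I}_{K,N,D}(v)$ from Milman's work, we obtain $\mathcal{I}_{K,N,D}(v)\leq \mathcal{I}^s_{K,N,D}(v)$.

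\textbf{Main obstacle.} The delicate point is the semicontinuity of the Minkowski content under the approximation: $\mu^+(\cdot)$ is only a $\liminf$ over $\ve\downarrow0$ in \eqref{eq:MinkCont}, so one cannot naively exchange it with the approximation limit. The way around this is exactly to not work with an arbitrary competitor $A$ but to exploit that the one-dimensional model profile $\mathcal{I}^s_{K,N,D}$ is realized (to within $\eta$) by simple sets — finite unions of intervals or half-lines — whose perimeter is literally a finite sum of density values at the endpoints; for such sets the Minkowski content is continuous in the endpoints and under uniform convergence of the density, so the interchange is legitimate. Establishing this reduction to simple competitors (which is implicit in Milman's analysis and in the classical one-dimensional isoperimetric theory) is where the real content lies; everything else is the routine convolution estimate of Lemma \ref{lem:approxh} plus bookkeeping of the rescaling constants.
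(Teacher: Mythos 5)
Your plan follows essentially the same route as the paper: the trivial inequality from the inclusion $\mathcal{F}_{K,N,D}\subset\mathcal{F}^{s}_{K,N,D}$, the case $N=1$ by hand, reduction to $D<\infty$, mollification via Lemma \ref{lem:approxh}, and then a stability step transferring near-optimal competitors from the limit measure to the smooth approximations. The one substantive difference is how that last step is closed: the paper simply observes that the mollified measures converge in total variation and then repeats verbatim the proof of Milman's stability result \cite[Proposition 6.1]{Mil}, which yields $\mathcal{I}_{(\R,|\cdot|,\mu)}(v)\geq\limsup_k\mathcal{I}_{(\R,|\cdot|,\mu_k)}(v)$ directly, whereas you sketch a hands-on one-dimensional argument (reduction of near-optimal competitors to finite unions of intervals, whose Minkowski content is a finite sum of density values at endpoints, then convergence of endpoints and of the uniformly convergent densities). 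That reduction is indeed the crux, and as you acknowledge you do not prove it; it requires the inequality ``Minkowski content $\geq$ weighted perimeter'', the structure of one-dimensional sets of finite perimeter, and a measure-fixing adjustment --- exactly the content that the citation of Milman's Proposition 6.1 supplies in the paper, so your deferral lands in the right place but is less precise. Finally, your reduction to $D<\infty$ by truncating and renormalizing is an unnecessary detour and, as stated, under-justified (you would need to relate the profile of the truncated, renormalized measure back to the original competitor); the paper's route is simpler: for $K>0$ the convexity condition \eqref{E:curvdensmm} itself forces the support to have length at most $\pi\sqrt{(N-1)/K}$ (Bonnet--Myers), while for $K\leq 0$ and $D=\infty$ the model profile $\mathcal{I}_{K,N,\infty}$ vanishes identically, so there is nothing to prove.
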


\begin{proof}
For $N=1$ the statement is trivial as   $\mathcal{F}_{K,1,D} = \mathcal{F}^{s}_{K,1,D}$, so we can assume $N \in (1,\infty)$.
\\It is also clear that  $\mathcal{I}^{s}_{K,N,D}(v)\leq \mathcal{I}_{K,N,D}(v)$ for every $v\in [0,1]$, since   $\mathcal{F}_{K,N,D} \subset \mathcal{F}^{s}_{K,N,D}$.
\\In order to show the converse inequality,  it is enough to consider the case $D\in (0,\infty)$: indeed  for $K>0$ we know that the diameter of the space is bounded  by the Bonnet-Myers Theorem and for  $K\leq 0, D=\infty$ it holds  $\mathcal{I}_{K,N,D}(v)\equiv 0$.
\\For an arbitrary measure $\mu=h \cdot \mathcal{L}^{1} \in  \mathcal{F}^{s}_{K,N,D}$,  
%
%
Lemma \ref{lem:approxh} gives a sequence $h_{k}\in C^{\infty}(\R)$ such that 
$$
\supp (h_k)\subset \left[ -\frac{1}{k}, D+\frac{1}{k} \right], 
	\qquad   \mu_k:=h_k \cdot \mathcal{L}^{1} \in  \mathcal{F}_{K,N,D+\frac{2}{k}}, 
		\qquad  \|h_{k} - h\|_{L^{1}((0,D))} \longrightarrow 0.
$$
Therefore the measures $\mu_{k}$ converge to $\mu$ in total variation sense:
$$
\lim_{k \to \infty}\| \mu_{k} - \mu \|_{TV}  = \lim_{k\to \infty} \sup \big\{ |\mu_{k}(A) - \mu(A)| \colon  A\subset \R \ \textrm{Borel} \big\} = 0.
$$
At this point we can repeat verbatim the proof of \cite[Proposition 6.1]{Mil} to get 
$$
\mathcal{I}^{s}_{(\R,|\cdot|,\mu)}(v) \geq \limsup_{k} \mathcal{I}_{(\R,|\cdot|,\mu_k)}(v) \geq  \mathcal{I}_{K,N,D}(v), \quad \text{for every } v \in [0,1].
$$
%
\end{proof}

\medskip

\subsection{Sharp lower bounds for the isoperimetric profile}

The goal of this section is to prove the following result, which is the heart of the present work.

\begin{theorem}\label{T:iso}
Let $(X,\sfd,\mm)$ be a metric measure space with $\mm(X)=1$, verifying  the essentially non-branching property and $\CD_{loc}(K,N)$ for some $K\in \R,N \in [1,\infty)$.
Let $D$ be the diameter of $X$, possibly assuming the value $\infty$.
\medskip

Then for every $v\in [0,1]$, 
$$
\cI_{(X,\sfd,\mm)}(v) \ \geq \ \cI_{K,N,D}(v), 
$$
where $\cI_{K,N,D}$ is the model isoperimetric profile defined in \eqref{defcI}.
\end{theorem}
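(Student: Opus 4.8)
The plan is to reduce the isoperimetric problem on $(X,\sfd,\mm)$ to a family of one-dimensional isoperimetric problems via the localization Theorem \ref{T:localize}, and then invoke Theorem \ref{thm:I=Is} together with the definition of $\cI_{K,N,D}$. Fix a Borel set $E\subset X$ with $\mm(E)=v\in(0,1)$ (the cases $v\in\{0,1\}$ being trivial), and suppose for contradiction that $\mm^+(E)<\cI_{K,N,D}(v)$. Applying the standard localization paradigm, I would take $f:=\chi_E-v$, which has zero mean; modulo a harmless truncation/exhaustion argument to ensure the first-moment integrability hypothesis $\int_X|f(x)|\sfd(x,x_0)\,\mm(dx)<\infty$ (one localizes on large balls and passes to the limit, or uses that $\mm$ is a probability measure and a suitable distance-weighted approximation), Theorem \ref{T:localize} provides a partition $\{X_q\}_{q\in Q}$ of a transport set $\mathcal{T}$, a disintegration $\mm\llcorner_{\mathcal T}=\int_Q \mm_q\,\qq(dq)$ which is a $\CD(K,N)$ disintegration, the complement $Z=X\setminus\mathcal T$ on which $f=0$ $\mm$-a.e., and the crucial balancing property $\int_{X_q} f\,\mm_q=0$ for $\qq$-a.e.\ $q$, i.e.\ $\mm_q(E\cap X_q)=v$ for $\qq$-a.e.\ $q$.

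The key point is then to compare $\mm^+(E)$ with the one-dimensional boundary measures along the rays. On $Z$ we have $f=0$ $\mm$-a.e., meaning $Z$ is $\mm$-a.e.\ either inside $E$ or outside $E$; since $\mm(E)=v\in(0,1)$, $Z$ alone cannot carry all the mass of $E$ nor all of its complement, so the ``interesting'' part of $\partial E$ lies inside $\mathcal T$. Using that each ray $X_q$ is isometric to an interval via the ray map $g(q,\cdot)$ and that $\mm_q=g(q,\cdot)_\sharp(h_q\,\mathcal L^1)$ with $h_q$ satisfying the synthetic curvature condition \eqref{E:curvdensmm} (so $(\mathrm{Dom}(g(q,\cdot)),|\cdot|,h_q\mathcal L^1)\in\mathcal F^s_{K,N,D}$), I would establish a Fubini-type/coarea-type inequality
$$
\mm^+(E)\ \geq\ \int_Q \mm_q^+(E\cap X_q)\,\qq(dq).
$$
This is the standard but delicate step: one must show that the $\ve$-enlargement $E^\ve$ in $X$ contains, within each ray, at least the one-dimensional $\ve$-enlargement of $E\cap X_q$, so that $\mm(E^\ve)-\mm(E)\ge \int_Q(\mm_q(E^\ve\cap X_q)-\mm_q(E\cap X_q))\,\qq(dq)$, and then pass the $\liminf$ inside the integral by Fatou. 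The branching set where rays meet has $\mm$-measure zero by Theorem \ref{T:equivalence}/\ref{T:summary}, so it does not contribute.

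Granting this, for $\qq$-a.e.\ $q$ we have $\mm_q(E\cap X_q)=v$ and $\mm_q\in\mathcal F^s_{K,N,D}$ (after the affine reparametrization that sends $\mathrm{Dom}(g(q,\cdot))$ into $[0,D]$, using that rays have length at most $D=\diam(X)$), hence $\mm_q^+(E\cap X_q)\ge \mathcal I^s_{K,N,D}(v)$. Integrating over $Q$ and using $\qq(Q)=1$ gives $\mm^+(E)\ge \mathcal I^s_{K,N,D}(v)=\mathcal I_{K,N,D}(v)=\cI_{K,N,D}(v)$ by Theorem \ref{thm:I=Is}, contradicting the assumption and proving the theorem. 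The main obstacle I anticipate is rigorously justifying the localized Minkowski-content inequality displayed above — i.e.\ that enlarging $E$ in the ambient space dominates, ray by ray, the one-dimensional enlargement — since rays are only measurably parametrized, may have differing lengths, and one must be careful that points of $E^\ve$ are reached along the same ray (this is where essential non-branching and the structure of $\mathcal T$ from Section \ref{S:dmonotone} are used); the subtlety about the first-moment integrability of $f=\chi_E-v$ and the contribution of the null set $Z$ and of the branching points are comparatively minor technical points.
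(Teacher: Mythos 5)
Your proposal follows essentially the same route as the paper's proof: localize with $f=\chi_E-v$ via Theorem \ref{T:localize}, compare Minkowski contents ray by ray through the inclusion of the one-dimensional $\ve$-enlargement of $g(q,\cdot)^{-1}(E)$ into $g(q,\cdot)^{-1}(E^\ve)$ (which is in fact immediate, not delicate, since $g(q,\cdot)$ is an isometry), pass the $\liminf$ inside by Fatou, and conclude from $\mm_q(E\cap X_q)=v$, $h_q\mathcal{L}^1\in\mathcal{F}^s_{K,N,D}$ and Theorem \ref{thm:I=Is}. The only point to streamline is the integrability hypothesis: no truncation or exhaustion is needed, because one may assume $D<\infty$ from the start (Bonnet--Myers if $K>0$; if $K\le 0$ and $D=\infty$ the model profile vanishes and there is nothing to prove), so $\int_X |f|\,\sfd(\cdot,x_0)\,\mm(dx)<\infty$ is automatic, and likewise the discussion of $Z$ simplifies since $f$ never vanishes for $v\in(0,1)$, forcing $\mm(Z)=0$.
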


\begin{proof}
First of all we can assume $D<\infty$ and therefore $\mm \in \mathcal{P}_{2}(X)$: indeed from the Bonnet-Myers Theorem if $K>0$ then $D<\infty$, and if $K\leq 0$ and $D=\infty$ then the model isoperimetric profile \eqref{defcI} trivializes, i.e. $\cI_{K,N,\infty}\equiv 0$ for $K\leq 0$.

For $v=0,1$ one can take as competitor the empty set and the whole space respectively, so it trivially holds 
$$
\cI_{(X,\sfd,\mm)}(0)=\cI_{(X,\sfd,\mm)}(1)= \cI_{K,N,D}(0)=\cI_{K,N,D}(1)=0.
$$
Fix then $v\in(0,1)$ and let $A\subset X$  be an arbitrary Borel subset of $X$ such that $\mm(A)=v$.
Consider the $\mm$-measurable function $f(x) : = \chi_{A}(x)  - v$ and notice that  $\int_{X} f \, \mm = 0$. 
Thus $f$ verifies the  hypothesis of Theorem \ref{T:localize} and noticing that $f$ is never null, 
we can decompose $X = Y \cup \mathcal{T}$ with 
$$
\mm(Y)=0, \qquad   \mm\llcorner_{\mathcal{T}} = \int_{Q} \mm_{q}\, \qq(dq), 
$$
with $\mm_{q} = g(q,\cdot)_\sharp \left( h_{q} \cdot \mathcal{L}^{1}\right)$; 
moreover,  for $\qq$-a.e. $q \in Q$,  the density $h_{q}$ verifies \eqref{E:curvdensmm}  and 
$$
\int_{X} f(z) \, \mm_{q}(dz) =  \int_{\dom(g(q,\cdot))} f(g(q,t)) \cdot h_{q}(t) \, \mathcal{L}^{1}(dt) = 0.
$$
Therefore 
\begin{equation}\label{eq:volhq}
v=\mm_{q} ( A \cap \{ g(q,t) : t\in \R \} ) = (h_{q}\mathcal{L}^1) (g(q,\cdot)^{-1}(A)), \quad \text{ for $\qq$-a.e. $q \in Q$}. 
\end{equation}
For every $\ve>0$ we  then have  
\begin{align*}
\frac{\mm(A^\ve)-\mm(A)}{\ve} 	&~  =  \frac{1}{\ve} \int_{\mathcal{T}} \chi_{A^\ve\setminus A} \,\mm(dx) 
									=  \frac{1}{\ve} \int_{Q} \left( \int_{X}   \chi_{A^\ve\setminus A} \, \mm_{q} (dx) \right)\, \qq(dq) \crcr
						&~ =    \int_{Q} \frac{1}{\ve} \left( \int_{\dom(g(q,\cdot))}  \chi_{A^\ve\setminus A} \,h_{q}(t) \, \mathcal{L}^{1}(dt) \right)\, \qq(dq) \crcr
						&~ =    \int_{Q} \left(  \frac{(h_{q}\mathcal{L}^1)(g(q,\cdot)^{-1}(A^\ve))-  (h_{q}\mathcal{L}^1)(g(q,\cdot)^{-1}(A))}{\ve}  \right)\, \qq(dq) \crcr
						&~ \geq    \int_{Q} \left(  \frac{(h_{q}\mathcal{L}^1)((g(q,\cdot)^{-1}(A))^\ve)-  (h_{q}\mathcal{L}^1)(g(q,\cdot)^{-1}(A))}{\ve}  \right)\, \qq(dq), \crcr
\end{align*}
where the last inequality is given by the inclusion $ (g(q,\cdot)^{-1}(A))^\ve \cap \supp(h_q) \subset  g(q,\cdot)^{-1}(A^\ve)$. \\
Recalling \eqref{eq:volhq} together with $h_{q}\mathcal{L}^1\in \mathcal{F}^{s}_{K,N,D}$, by Fatou's Lemma we get
\begin{align*}
\mm^+(A)		&~ = 	\liminf_{\ve\downarrow 0} \frac{\mm(A^\ve)-\mm(A)}{\ve} \crcr
			&~\geq 	\int_{Q}  \left(  \liminf_{\ve\downarrow 0} \frac{(h_{q}\mathcal{L}^1)((g(q,\cdot)^{-1}(A))^\ve) -  
											(h_{q}\mathcal{L}^1)(g(q,\cdot)^{-1}(A))}{\ve}  \right)\, \qq(dq) \crcr
			&~ =   	\int_{Q} \left( (h_{q}\mathcal{L}^1)^+(g(q,\cdot)^{-1}(A))  \right)\, \qq(dq) \crcr
			&~ \geq  	\int_{Q}  \cI^s_{K,N,D} (v) \, \qq(dq)  \crcr
			&~ =  	\cI_{K,N,D} (v),
\end{align*}
where in the last equality we used Theorem \ref{thm:I=Is}. The conclusion follows from Remark \ref{rem:IKNDinf}.
\end{proof}

\medskip
\noindent
{\textbf{Proof of Theorems  \ref{thm:LG} and \ref{thm:mainIsoComp}}}
Since $\RCD^*(K,N)$-spaces are essentially non branching (see \cite{RS2014}) and the $\CD^*(K,N)$ condition is equivalent to $\CD_{loc}(K,N)$ for $N\in(1,\infty)$ and for $K\geq 0, N=1$, then we can apply  Theorem \ref{T:iso} and get Theorem \ref{thm:mainIsoComp}.
As already observed in the introduction, the Levy-Gromov isoperimetric inequality claimed in Theorem \ref{thm:LG} is just a special case of Theorem \ref{thm:mainIsoComp} when $K>0$ and $N$ is a positive integer.
\hfill$\Box$

\medskip

\subsection{Rigidity in the isoperimetric comparison estimates: proof of Theorem \ref{thm:Rigidity}}

The following  lemma will play a key role for proving the rigidity and the almost rigidity statements.

\begin{lemma}\label{lem:D<inf}
For every  $v\in (0,1), N>1$ and $\ve_0\in (0,\pi)$ there exist $\eta=\eta(v,N,\ve_0)>0$ such that for every $\delta\in \left[0, \frac{N-1}{2}\right]$ and for every $D\in(0,\pi-\ve_0)$ it holds
\begin{equation}\label{eq:D<infquant}
\cI_{N-1-\delta, N+\delta,D}(v) \geq \cI_{N-1-\delta, N+\delta,\infty}(v)+\eta.
\end{equation}
\end{lemma}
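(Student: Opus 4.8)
The plan is to reduce, by a rescaling and the explicit description of the model profile, to a purely one-dimensional inequality for the sine density, and then prove that inequality by hand using a concavity property of the associated quantile function.

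\textbf{Step 1 (reduction).} Write $K:=N-1-\delta$, $N':=N+\delta$. Since $\delta\in[0,\tfrac{N-1}{2}]$ one has $K\in[\tfrac{N-1}{2},N-1]$, $M:=N'-1\in[N-1,\tfrac{3(N-1)}{2}]$ and $\lambda:=\sqrt{K/M}\in[\tfrac{1}{\sqrt3},1]$, all controlled by $N$ alone. As $K>0$ and $D<\pi-\varepsilon_0<\pi\le\pi/\lambda=\sqrt{(N'-1)/K}\,\pi$, the profile $\cI_{K,N',D}$ is given by Case~$1$ and $\cI_{K,N',\infty}$ by Case~$2$ of Section~\ref{SS:IKND}; the substitution $s=\lambda t$, which rescales the line so that Minkowski contents get multiplied by $\lambda$, then yields
\[
\cI_{N-1-\delta,N+\delta,D}(v)-\cI_{N-1-\delta,N+\delta,\infty}(v)=\lambda\,\Big(\inf_{\xi\in[0,\pi-\ell]}\cI_{(\R,|\cdot|,\mu_{\xi,\ell})}(v)-\cI_{(\R,|\cdot|,\nu_M)}(v)\Big),
\]
where $\ell:=\lambda D<\pi-\tfrac{\varepsilon_0}{\sqrt3}$, $\nu_M:=c_M\sin^M(t)\,\mathcal L^1\llcorner_{[0,\pi]}$ (with $c_M$ the normalising constant), and $\mu_{\xi,\ell}$ is the renormalised restriction of $\nu_M$ to $[\xi,\xi+\ell]$. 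Since $\lambda\ge\tfrac1{\sqrt3}$, it suffices to bound the bracket below by a constant $\eta_1=\eta_1(N,\varepsilon_0,v)>0$.

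\textbf{Step 2 (structure of the 1-d isoperimetric regions).} The density $\sin^M$, and its positive multiples on sub-intervals, satisfies \eqref{E:curvdensmm} with equality; hence by the one-dimensional isoperimetric analysis for such densities (see \cite{Mil}) the isoperimetric regions of $h\,\mathcal L^1$ on a compact interval $[a,b]$ are \emph{end-intervals} $[a,a+s]$ or $[b-s,b]$. Thus $\cI_{(\R,|\cdot|,\nu_M)}(v)=c_M\sin^M(r_v)$ with $\nu_M([0,r_v])=v$, while $\cI_{(\R,|\cdot|,\mu_{\xi,\ell})}(v)=\min\{h(\xi+a),\,h(\xi+\ell-b)\}$, where $h$ is the density of $\mu_{\xi,\ell}$ and $\mu_{\xi,\ell}([\xi,\xi+a])=\mu_{\xi,\ell}([\xi+\ell-b,\xi+\ell])=v$. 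The reflection $t\mapsto\pi-t$ sends $[\xi,\xi+\ell]$ to another sub-interval and swaps the two end-intervals, so it is enough to show that the left end-interval value $h(\xi+a)$ strictly exceeds $c_M\sin^M(r_v)$.

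\textbf{Step 3 (the key estimate via concavity).} Let $\Phi(x):=\nu_M([0,x])$ and $J:=\sin^M\circ\Phi^{-1}\colon[0,1]\to[0,\infty)$. A direct computation gives $J'(w)=\tfrac{M}{c_M}\cot(\Phi^{-1}(w))$ and $J''(w)<0$, so $J$ is strictly concave, $J(0)=J(1)=0$, $J>0$ on $(0,1)$, increasing on $[0,\tfrac12]$ and strictly decreasing on $[\tfrac12,1]$. Set $m:=\nu_M([\xi,\xi+\ell])<1$ and $u:=\Phi(\xi)\in[0,1-m]$; the mass conditions become $\Phi(\xi+a)=u+vm$, $\Phi(r_v)=v$, and $h=\tfrac{c_M}{m}\sin^M$ on $[\xi,\xi+\ell]$, so the inequality to be proved reads $J(u+vm)>m\,J(v)$. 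This follows from strict concavity and $J(0)=J(1)=0$: if $u+vm\le\tfrac12$ then $J(u+vm)\ge J(vm)\ge m\,J(v)$, the last inequality being strict; if $u+vm>\tfrac12$, comparing $J$ with its chord from $(v,J(v))$ to $(1,0)$ and using $u+vm\le 1-m(1-v)<1$ (together with strict concavity, or with strict monotonicity on $[\tfrac12,1]$ when $u+vm<v$) again gives the strict inequality.

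\textbf{Step 4 (uniformity and conclusion).} Since $D<\pi-\varepsilon_0$, hence $\ell<\pi-\tfrac{\varepsilon_0}{\sqrt3}$, the complement of $[\xi,\xi+\ell]$ in $[0,\pi]$ always contains an interval of length $\ge\tfrac{\varepsilon_0}{2\sqrt3}$ abutting $0$ or $\pi$, so $m\le 1-\rho$ for some $\rho=\rho(N,\varepsilon_0)>0$. One then concludes in either of two ways: (a) make Step~$3$ quantitative, bounding $J(u+vm)-m\,J(v)$ below by a positive constant depending only on $M$, $v$ and $\rho$; or (b) note that $(M,\ell)\mapsto\inf_{\xi\in[0,\pi-\ell]}\cI_{(\R,|\cdot|,\mu_{\xi,\ell})}(v)-\cI_{(\R,|\cdot|,\nu_M)}(v)$ is continuous on $[N-1,\tfrac{3(N-1)}{2}]\times[\ell_0,\pi-\tfrac{\varepsilon_0}{\sqrt3}]$ for every $\ell_0>0$ (the profiles being given by the explicit end-interval formula of Step~$2$), is strictly positive there by Step~$3$, and tends to $+\infty$ as $\ell\downarrow0$, so its infimum over $\ell\in(0,\pi-\tfrac{\varepsilon_0}{\sqrt3}]$ is a positive constant $\eta_1(N,\varepsilon_0,v)$. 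Either way the bracket in Step~$1$ is $\ge\eta_1$, whence $\cI_{N-1-\delta,N+\delta,D}(v)-\cI_{N-1-\delta,N+\delta,\infty}(v)\ge\tfrac{1}{\sqrt3}\,\eta_1=:\eta>0$, which is \eqref{eq:D<infquant}. The main obstacle is precisely the passage from the pointwise strict inequality $J(u+vm)>m\,J(v)$ of Step~$3$ to a gap that is uniform over all admissible parameters; the hypothesis $D<\pi-\varepsilon_0$ enters exactly through the uniform lower bound on the $\nu_M$-mass of the complementary intervals, and a little extra care is needed for $\ell$ near $0$ (where the profile blows up, so there is no problem) and for $\xi$ near $0$ or $\pi-\ell$ (where a cap density touches zero).
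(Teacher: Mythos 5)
Your argument is correct, and it reaches the gap \eqref{eq:D<infquant} by a route that differs in its mechanics from the paper's, even though both ultimately exploit the same phenomenon: restricting the model measure to a window of length $<\pi$ and renormalising scales perimeters by $1/m$, while the strict concavity of the model profile (your $J=\sin^M\circ\Phi^{-1}$, which up to the constant $c_M$ \emph{is} $\cI_{N-1-\delta,N+\delta,\infty}$) beats this linear scaling. The paper's proof imports from \cite{Mil} the existence of an optimal set $A$ for the diameter-constrained problem together with the structural fact that the extremal measure is the renormalised restriction of the model measure to $[0,D]$; it then compares $\mu_\infty^+(A)$ with $\lambda\,\mu_D^+(A)$ and concludes via the uniform strict concavity of $t\mapsto\cI_{N-1-\delta,N+\delta,\infty}(tv)$ together with explicit bounds $\lambda\in[\ve_2,1-\ve_2]$, which is exactly your inequality $J(u+vm)>mJ(v)$ in the special case $u=0$, made quantitative. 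You instead work directly with the Case~1 infimum over \emph{all} windows $[\xi,\xi+\ell]$, so you must prove the inequality for general $u\in[0,1-m]$, which you do correctly by chord comparisons with $J(0)=J(1)=0$; in exchange you avoid both the existence of optimal sets and the claim that the optimal window starts at an endpoint, needing only the end-interval characterisation of one-dimensional minimisers (available from \cite{Mil}, or from Bobkov's theorem since $\sin^M$ is strictly log-concave). The uniformity is then obtained by compactness/continuity in $(M,\ell)$ plus the blow-up of the profile as $\ell\downarrow 0$, rather than by the paper's explicit bound on the mass ratio; note that the blow-up is genuinely one line (unimodality gives profile $\geq\min(v,1-v)/\ell$), and your continuity claim for the windowed profiles is justified by the explicit formula $(c_M/m)\min\{J(u+vm),J(u+(1-v)m)\}$, so the level of rigour in your Step 4 is comparable to the paper's own ``it is not difficult to see'' and ``strictly concave uniformly in $\delta$''. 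In short: the paper's argument is shorter but leans harder on Milman's structure results; yours is more self-contained and explicit at the cost of the extra parameter $u$ and a compactness argument.
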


\begin{proof}
Fix $v\in (0,1), N>1$ and $\ve_0\in (0,\pi)$ as above. First of all it is not difficult to see that 
$$\lim_{D\downarrow 0}  \cI_{N-1-\delta, N+\delta,D}(v) \to +\infty \quad \text{uniformly for }\delta \in \left[0,\frac{N-1}{2}\right]. $$
Therefore, in order to establish \eqref{eq:D<infquant}, it is enough to consider the case $D\in (\ve_1,\pi-\ve_0)$ for some  $\ve_1=\ve_1(N)\in \big(0, \pi-2\ve_0\big)$ independent of $\delta\in \left[0,\frac{N-1}{2}\right]$.

By \cite{Mil} we know that there exist  $A=A_{\delta,N,D} \subset [0,D]$ and  $\mu_{N-1-\delta,N+\delta,D}\in {\mathcal F}_{N-1-\delta,N+\delta,D}$  such that  
$$
\mu_{N-1-\delta,N+\delta,D}(A)= v \quad \text{and} \quad \mu^{+}_{N-1-\delta,N+\delta,D}(A)= \cI_{N-1-\delta,N+\delta,D}(v),
$$
where the minimizer $\mu_{N-1-\delta,N+\delta,D}$ is given by
$$
\mu_{N-1-\delta,N+\delta,\infty} \llcorner_{[0,D]} =\lambda \, \mu_{N-1-\delta,N+\delta,D}, \quad \text{for some } \lambda=\lambda_{\delta,v,N} \in [\ve_2, 1-\ve_2] \subset (0,1),$$
for some $\ve_2={\ve_2}(\ve_0,\ve_1)\in (0,1/2)$.
Observe that 
$$\mu_{N-1-\delta,N+\delta,\infty}(A)=\lambda v \quad  \text{and} \quad \mu^+_{N-1-\delta,N+\delta,\infty}(A)=\lambda \,  \mu^+_{N-1-\delta,N+\delta,D}(A), $$  
and that  the maps  
$$\cI_{N-1-\delta,N+\delta,\infty}:  [\ve_2, 1-\ve_2] \to \R^+, \quad t \mapsto \cI_{N-1-\delta,N+\delta,\infty}(t v)$$ 
are strictly concave functions uniformly with respect to $\delta\in \big[0, \frac{N-1}{2}\big]$. Since   $\cI_{N-1-\delta,N+\delta,\infty}(0)=0$, it follows that there exists $\eta=\eta(v,N,\ve_0)>0$ such that 
\begin{eqnarray}
\lambda \, \cI_{N-1-\delta,N+\delta,\infty}(v) & \leq &  \cI_{N-1-\delta,N+\delta,\infty}(\lambda \, v) - \eta \leq \mu^+_{N-1-\delta,N+\delta,\infty}(A) - \eta \nonumber \\
                                                                     & = &  \lambda  \, \mu^+_{N-1-\delta,N+\delta,D}(A) - \eta =\lambda \, \cI_{N-1-\delta,N+\delta,D}(v)- \eta. \nonumber
\end{eqnarray}
Multiplying both sides by  $\lambda^{-1}\in \big[ \frac{1}{1-\ve_2},  \frac{1}{\ve_2} \big]$ we obtain the thesis. 
\end{proof}

%
%

%
%
%
%
%
%

\medskip
\noindent
{\textbf{Proof of Theorem \ref{thm:Rigidity}}}.

First of all we claim that if for some $\bar{v} \in (0,1)$ one has $\cI_{(X,\sfd,\mm)}(\bar{v})=\cI_{N-1,N,\pi}(\bar{v})$ 
then $(X,\sfd)$ has diameter equal to $\pi$; then the Maximal Diameter Theorem \cite[Theorem 1.4]{Ket} 
will imply that $X$ is a spherical suspension over an $\RCD^*(N-2,N-1)$ space $Y$ as desired.   

So suppose by contradiction $(X,\sfd)$ has diameter equal to $\pi-\ve_0<\pi$ then by Lemma \ref{lem:D<inf}  there exists $\delta>0$ such that
\begin{equation}  \nonumber
\cI_{N-1,N,\pi}(\bar{v}) \leq  \cI_{N-1,N,D}(\bar{v}) -\delta, \quad \forall D\in (0,\pi-\ve_0].
\end{equation}
At this point we could already conclude by observing that  we reached the contradiction 
$$\cI_{N-1,N,\pi}(\bar{v})=\cI_{(X,\sfd,\mm)}(\bar{v})\geq  \cI_{N-1,N, \pi-\varepsilon_0}(\bar{v}) \geq \cI_{N-1,N,\pi}(\bar{v})  + \delta,$$ 
where in the first inequality we applied Theorem  \ref{T:iso}. 
Let us also give a more direct argument using 1-d localization.  Let $A\subset X$  be such that 
\begin{equation} \nonumber
\mm(A)=\bar{v} \quad \text{and} \quad \mm^+(A)\leq \cI_{(X,\sfd,\mm)}(\bar{v})+\frac{\delta}{2}=\cI_{N-1,N,\pi}(\bar{v})+\frac{\delta}{2}.
\end{equation}
Repeating the proof of Theorem  \ref{T:iso}, we obtain the contradiction
\begin{eqnarray}\label{eq:PfRig0}
\cI_{N-1,N,\pi}(\bar{v})+\frac{\delta}{2} &\geq&  \mm^+(A)  \geq  \int_{Q} \left( (h_{q}\mathcal{L}^1)^+(g(q,\cdot)^{-1}(A))  \right)\, \qq(dq)  
															\geq  \int_{Q}  \cI_{N-1,N, |\supp(h_q)|} (\bar{v}) \, \qq(dq) \nonumber \\
                                                                 &\geq&   \cI_{N-1,N,\pi}(\bar{v})+\delta, \nonumber
\end{eqnarray}
where  $|\supp(h_q)|$ denotes the length of the segment $\supp(h_q)\subset \R$ and we made use that, since by Theorem \ref{T:localize} we know that $\supp(h_q)$ is isometric to  a geodesic $X_q$ of $(X,\sfd)$ for $\qq$-a.e. $q$, then  $|\supp(h_q)| \leq \pi-\ve_0$. 

\medskip
This concludes the first part of the proof. We now proceed to characterize the isoperimetric sets.

\medskip

{\textbf{Step 1.}} \\
If there exists a Borel subset $A\subset X$ with $\mm(A)=\bar{v}$ such that $\mm^+(A)= \cI_{(X,\sfd,\mm)}(\bar{v})=\cI_{N-1,N,\pi}(\bar{v})$ 
then we have just proved that $(X,\sfd,\mm)$ is a spherical suspension, i.e. $X\simeq [0,\pi]\times^{N-1}_{\sin}Y$.  \\ 
Now we claim that the following more precise picture holds: 
\begin{enumerate}
\item   $(Y, \sfd_Y, \mm_Y)$ is an   $\RCD^*(N-2,N-1)$ space  and $(Q,\qq)$ is isomorphic as measure space to $(Y,\mm_Y)$;
\item    for $\qq$-a.e. $q$ it holds  $h_q(t)= c\,  (\sin t)^{N-1}$, where $c>0$ is a normalizing constant.
\end{enumerate}

Indeed, consider the 1-Lipschitz function  $\f$  inducing the $1$-d localization associated to $A$.  By the discussion right before Step 1 we know 
that for  $\qq$-a.e. $q \in Q$ the ray $X_q$ has length $\pi$ and $\mm_{q}^{+}(A\cap X_{q}) = \cI_{N-1,N,\pi}(\bar{v})$.  \\
Let us now fix one of those rays $X_{q}$ and call $N,S \in X_q$  the  endpoints of the geodesic $X_q$. Then $\sfd(S,N)=\textrm{length}(X_q)=\pi$ and by the Maximal Diameter Theorem \cite{Ket} $X$ is a spherical suspension, i.e. $X\simeq [0,\pi]\times^{N-1}_{\sin}Y$ for some $\RCD^*(N-2,N-1)$ space $(Y,\sfd_Y,\mm_Y)$    and $N,S$ correspond respectively to the north and south pole of such a suspension structure that is $S=(0,y_0)$, $N=(\pi,y_0)$, for some $y_0 \in Y$. 
\\For the rest of the proof we will identify $X$ with $[0,\pi]\times^{N-1}_{\sin}Y$ and with a slight abuse of notation we will write $J\times E$ meaning $\{(t,y)\in  [0,\pi]\times^{N-1}_{\sin}Y \,: \, t \in J \text{ and } y \in E\}$.  By the choice of $N,S$ we infer that
$$
\f(S) - \f(N) = \sfd(S,N) = \pi, 
$$
and with no loss of generality, just adding a constant to $\f$, we can assume $\f(S) = \pi$ and $\f(N)= 0$.
\\ Take now any other element $(t,y) \in [0,\pi]\times^{N-1}_{\sin}Y$. Since the curve $[0,\pi] \ni t \mapsto (t,y)$ is a geodesic from $S$ to $N$, from Lemma \ref{L:cicli} it follows that 
$$
\pi - \f((t,y))= \f(S) - \f((t,y)) = \sfd(S, (t,y)) = t. 
$$
Therefore for any $(t,y) \in [0,\pi]\times^{N-1}_{\sin}Y$ it holds $\f((t,y)) = \pi - t$.
It follows that for $\qq$-a.e. $q \in Q$ there exists $y_q\in Y$ such that $X_q=[0,\pi]\times \{y_q\}$.
We deduce that $\T = (0,\pi) \times  Y$ and $\QQ((t,y)) = (1/2, y)$ is a quotient map yielding $(Q, \qq) \simeq (Y,\mm_{Y})$ as measure spaces; in particular claims (1) and (2) are proved.

\medskip  
  
{\textbf{Step 2.}} \\
Let $A\subset X$ be as in Step 1. Called $\mu_{N-1,N}:=  (\sin (t))^{N-1} \, {\mathcal L}^1 \llcorner [0,\pi]$, we claim that there exists  a subinterval $I_{\bar v} \subset [0,\pi]$ with $ \mu_{N-1,N}(I_{\bar v})=\bar{v}$ such that $A=I_{\bar{v}}\times Y$. Recall that  $A=I_{\bar{v}}\times Y$ has to be intended in the coordinates $(t,y)$ of $[0,\pi] \times^{N-1}_{\sin}Y$; in other words it is not a product as m.m.s. but just  a short-hand notation we are using to denote the set
$$
 A=\{(t,q) \in  [0,\pi]\times^{N-1}_{\sin}Y \, : \, t\in I_{\bar{v}}\}.
$$
In order to prove such a claim we first recall that by \cite{Mil} (this is actually a classical result going back to L\'evy and Gromov at least for integer $N$) 
 there exists a  Borel set $J_{\bar{v}}\subset [0,\pi]$ with $\mu_{N-1,N}(J_{\bar{v}})={\bar{v}}$ such that  
$\mu^+_{N-1,N}(J_{\bar{v}})=\cI_{N-1,N,\infty}({\bar{v}})$, and such a Borel set must be an interval either of the form $[0,r_{\bar{v}}]$ or $[\pi-r_{\bar{v}}, \pi]$ for a suitable $r_{\bar{v}} \in (0,\pi)$. 
\\By the previous discussion, for $\qq$-a.e. $q\in Q$,  we know that $g(q,\cdot)^{-1}(A)$ must be either equal to $[0,r_{\bar{v}}]$ or to $[\pi-r_{\bar{v}}, \pi]$. But now the configuration where both
$$
\qq(\{q\,:\, g(q,\cdot)^{-1}(A)= [0,r_{\bar{v}}]\})>0 \quad \text{and} \quad \qq(\{q\,:\, g(q,\cdot)^{-1}(A)= [\pi-r_{\bar{v}},\pi]\})>0,
$$
creates an interface between the two corresponding subsets of $A$ which will cost a higher Minkowski content than the 
configuration where either $g(q,\cdot)^{-1}(A)=[0,r_{\bar{v}}]$ for $\qq$-a.e. $q$ or  $g(q,\cdot)^{-1}(A)=[\pi-r_{\bar{v}}, \pi]$ for $\qq$-a.e. $q$.  \\
Let us give a rigorous proof of the last intuitive statement.  Assume by contradiction that there exist $Q_1,Q_2$ Borel subsets of  $Y$ with $\mm_Y(Q_1)=1-\mm_{Y}(Q_2)\in (0,1)$,  such that $A=A_1\cup A_2$ where 
$$
A_{1} : = [0,r_{\bar{v}}]\times Q_1, \quad  A_{2}=[\pi - r_{\bar{v}}, \pi] \times Q_2.
$$
Calling  $A = A_{1} \cup A_{2}$, clearly $\mm(A)=\bar{v}$.  Suppose by contradiction that $\mm^+(A)= \cI_{(X,\sfd,\mm)}(\bar{v})=\cI_{N-1,N,\pi}(\bar{v})$. 
Notice that if $(t,p), (t,q) \in [0,\pi]\times^{N-1}_{\sin} Y$ then their distance $\sfd((t,p), (t,q)) = \sin(t)^{N-1} \sfd_{Y}(p,q) \leq \sfd_{Y}(p,q)$.
Therefore  
\begin{equation} \label{eq:A1eps}
A_1^\ve\supset [0,r_{{\bar{v}}} + \ve]\times Q_1 \cup  [0,r_{{\bar{v}}}] \times Q_1^{\ve},
\end{equation}
 with analogous inclusion for $A_{2}^{\ve}$.
%
Using \eqref{eq:A1eps} it is not difficult to check that, thanks to the symmetry of the space, it holds
\begin{align}\label{eq:Ave-A}
\mm(A^\ve)-\mm(A) \geq &~ \mm\big([r_{\bar{v}}, r_{\bar{v}}+\ve]\times  Q_1\big) +  \mm\big([\pi-r_{\bar{v}}-\ve, \pi-r_{\bar{v}}] \times  Q_2 \big) \nonumber \\
&  +  \sum_{i = 1,2} \mm\big( [0, \min\{r_{\bar{v}}, \pi-r_{\bar{v}}\}] \times (Q_i^\ve\setminus Q_i)\big) .
\end{align}
Along the same lines of the proof of Theorem  \ref{T:iso} and using that from Step 1 we know $(Y,\mm_Y)\simeq(Q,\qq)$, it is not hard to show that
\begin{eqnarray}
\liminf_{\ve\downarrow 0} \frac{ \mm\big([r_{\bar{v}}, r_{\bar{v}}+\ve]\times  Q_1\big)}{\ve}& = & \qq(Q_1) \, \cI_{N-1,N,\pi}(\bar{v})=\mm_Y(Q_1)   \, \cI_{N-1,N,\pi}(\bar{v}),  \label{eq:mYQ1}  \\
\liminf_{\ve\downarrow 0} \frac{ \mm\big([\pi-r_{\bar{v}}-\ve, \pi-r_{\bar{v}}] \times  Q_2 \big)}{\ve}&= & \qq(Q_2) \, \cI_{N-1,N,\pi}(\bar{v})=\mm_Y(Q_2)   \, \cI_{N-1,N,\pi}(\bar{v}) .\label{eq:mYQ2}
\end{eqnarray}
Moreover, since  from Step 1 for  $\qq$-a.e. $q$ it holds  $h_q(t)= c\,  (\sin t)^{N-1}$ and  $(Y,\mm_Y)\simeq(Q,\qq)$, we also get
\begin{eqnarray}
\liminf_{\ve\downarrow 0} \frac{\mm\big( [0, \min\{r_{\bar{v}}, \pi-r_{\bar{v}}\}] \times (Q_i^\ve\setminus Q_i)\big)}{\ve}
&=&  \liminf_{\ve\downarrow 0}  \frac{1}{\ve} \int_{Q_i^\ve\setminus Q_i}  \left[ c \int_0^{ \min\{r_{\bar{v}}, \pi-r_{\bar{v}}\}} (\sin t)^{N-1}  dt\right] \qq(dq) \nonumber \\
&=&   \liminf_{\ve\downarrow 0}  \lambda_{\bar{v}} \frac{\mm_Y(Q_i^\ve)- \mm_Y(Q_i)}{\ve} =    \lambda_{\bar{v}} \, \mm_Y^+(Q_i), \; i=1,2,\label{eq:mYQi}
\end{eqnarray}
where we set $\lambda_{\bar{v}}:= c \int_0^{ \min\{r_{\bar{v}}, \pi-r_{\bar{v}}\}} (\sin t)^{N-1}  dt$. Notice that $\lambda_{\bar{v}}>0$ for $\bar{v}\in (0,1)$. Recalling that, from Step 1, $(Y,\sfd_Y,\mm_Y)$ is an  $\RCD^*(N-2,N-1)$ space,   from Theorem \ref{T:iso} it follows that
\begin{equation}\label{eq:mYQi2}
\mm_{Y}^{+}(Q_{1}) \geq \cI_{N-2,N-1,\pi} (\mm_{Y}(Q_{1})) > 0, \quad  \mm_{Y}^{+}(Q_{2}) \geq \cI_{N-2,N-1,\pi} (\mm_{Y}(Q_{2})) > 0.
\end{equation}
Since by construction  $\mm_Y(Q_1)+\mm_Y(Q_2)=1$, the combination of  \eqref{eq:Ave-A},  \eqref{eq:mYQ1}, \eqref{eq:mYQ2},   \eqref{eq:mYQi} and \eqref{eq:mYQi2} yields
$$
\mm^+(A):=\liminf_{\ve\downarrow 0} \frac{\mm(A^\ve)-\mm(A)}{\ve}> \cI_{N-1,N,\pi}(\bar{v}), 
$$
contradicting that  $\mm^+(A)= \cI_{(X,\sfd,\mm)}(\bar{v})=\cI_{N-1,N,\pi}(\bar{v})$.

%
%
%
%

\medskip

{\textbf{Step 3.}} \\
We claim that if  $(X,\sfd,\mm)=[0,\pi]\times^{N-1}_{\sin}Y$ for some m.m.s.  $(Y,\sfd_1,\mm_1)$ with $\mm(Y)=1$ then, calling
$$
A=\{(t,q) \in  [0,\pi]\times^{N-1}_{\sin}Y \, : \, t\in [0,r_{\bar{v}}]\}
$$
where $r_{\bar{v}}$ is such that $\mu_{N-1,N}([0,r_{\bar{v}}])={\bar{v}}$, we have
$$
\mm(A)={\bar{v}} \quad \text{ and } \quad \mm^+(A)=\cI_{N-1,N,\pi}({\bar{v}}).
$$
The fact that $\mm(A)={\bar{v}}$ is clear by Fubini's Theorem, so let us show the second statement. 
For every $\ve>0$ observe that  the geometry of $A$ implies that   
\begin{eqnarray}
\frac{\mm(A^\ve)-\mm(A)}{\ve} &=&  \frac{1}{\ve} \;  \int_{Y}  \mu_{N-1,N}(\{t\in [0,\pi]\,: \, (t,q)\in A^\ve\setminus A\}) \, \mm_Y(dq)  \nonumber \\
&=&  \frac{\mu_{N-1,N}([0,r_{\bar{v}}+\ve])- \mu_{N-1,N}([0,r_{\bar{v}}]) }{\ve}.  \label{eq:mmmu}
\end{eqnarray}
Now observe  that 
\begin{eqnarray}
\lim_{\ve \downarrow 0}  \frac{\mu_{N-1,N}([0,r_{\bar{v}}+\ve])- \mu_{N-1,N}([0,r_{\bar{v}}]) }{\ve}&=& \liminf_{\ve \downarrow 0}  \frac{\mu_{N-1,N}([0,r_{\bar{v}}+\ve])- \mu_{N-1,N}([0,r_{\bar{v}}]) }{\ve}     \nonumber \\
&=& \mu_{N-1,N}^+([0,r_{\bar{v}}])=\cI_{N-1,N,\pi}({\bar{v}}).   \nonumber 
\end{eqnarray}
Therefore, taking a sequence $\ve_i \downarrow 0$ such that 
$$
\mm^+(A)=\liminf_{\ve \downarrow 0} \frac{\mm(A^\ve)-\mm(A)}{\ve}=\lim_{i\to \infty}  \frac{\mm(A^{\ve_i})-\mm(A)}{\ve_i},
$$
we can pass to the limit in \eqref{eq:mmmu} over the sequence $\ve_i\downarrow 0$ and conclude the proof. 
\hfill$\Box$

\subsection{Almost equality in L\'evy-Gromov  implies  almost rigidity} 

Let us start by the following  lemma.

\begin{lemma}\label{lem:Icont}
For every $N>1$ and every $v \in [0,1]$, the map 
$$[0, N-1) \ni \delta \mapsto \cI_{N-1-\delta, N-\delta, \infty}(v) \in \R^+  \quad \text{is continuous}.$$
In particular for every  $\eta>0$ there exists $\bar{\delta}=\bar{\delta}(N,\eta)>0$ such that
$$\left| \cI_{N-1,N,\infty} (v) - \cI_{N-1-\delta, N+\delta, \infty} (v)   \right| \leq \eta , \quad \forall v \in [0,1], \; \forall \delta\in [0, \bar{\delta}]. $$
\end{lemma}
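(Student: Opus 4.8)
The plan is to exploit the explicit characterization of $\cI_{K,N,D}$ recalled in Section \ref{SS:IKND}. For $\delta \in [0,N-1)$ and $v\in[0,1]$ we have, by Case 2 (taking $D=\infty > \sqrt{\tfrac{N-1-\delta}{N-1-\delta}}\,\pi$ whenever $N-1-\delta>0$), the identity
\[
\cI_{N-1-\delta,N+\delta,\infty}(v) = \cI_{\big([0,\pi\sqrt{(N-1-\delta)/(N-1+\delta-?)}\,],\ \sin(\cdot)^{N+\delta-1}\big)}(v),
\]
more precisely $\cI_{N-1-\delta,N+\delta,\infty}(v)=\cI_{\left(\big[0,\,\pi\,a_\delta\big],\; s_{\delta'}(\cdot)^{N+\delta-1}\right)}(v)$ where $\delta':=\frac{N-1-\delta}{N+\delta-1}$ and $a_\delta:=\pi/\sqrt{\delta'}$ (for $\delta=0$ this is the round-sphere profile $\cI_{(\mathbb S^N,g^K_{can},\mu^K_{can})}$ when $N$ is an integer). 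The point is that as $\delta$ ranges over a compact subinterval $[0,\bar\delta_0]\subset[0,N-1)$, the data defining this one-dimensional isoperimetric problem --- the length $\pi a_\delta$ of the interval and the density $t\mapsto s_{\delta'}(t)^{N+\delta-1}$ --- depend continuously (indeed smoothly) on $\delta$, uniformly on $[0,\pi a_{\bar\delta_0}]$.

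First I would record the elementary fact that for a family of probability measures $\mu_\delta = h_\delta\,\mathcal L^1\llcorner_{[0,\ell_\delta]}$ on the line, with $\ell_\delta\to\ell_0$ and $h_\delta\to h_0$ uniformly on compacta (and $h_0$ continuous, positive on the interior), one has $\cI_{(\R,|\cdot|,\mu_\delta)}(v)\to \cI_{(\R,|\cdot|,\mu_0)}(v)$ for every $v\in[0,1]$; in fact the convergence is locally uniform in $v$. This is exactly the kind of stability statement already used in the proof of Theorem \ref{thm:I=Is} (via total-variation convergence and \cite[Proposition 6.1]{Mil}): total-variation convergence $\mu_\delta\to\mu_0$ follows from uniform convergence of densities on a common compact interval plus $\ell_\delta\to\ell_0$, and then \cite[Proposition 6.1]{Mil} gives $\cI_{(\R,|\cdot|,\mu_0)}(v)\ge\limsup_\delta \cI_{(\R,|\cdot|,\mu_\delta)}(v)$; the matching liminf inequality is obtained by symmetry, running the same argument with the roles of $\mu_0$ and $\mu_\delta$ exchanged (or by a direct approximation of an optimal region for $\mu_0$). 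Applying this with $\mu_\delta$ the explicit model measures above yields continuity of $\delta\mapsto\cI_{N-1-\delta,N+\delta,\infty}(v)$ on $[0,N-1)$, proving the first assertion.

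For the ``in particular'' clause I would then argue: continuity on the compact interval $[0,\bar\delta_0]$ together with the fact that the convergence $\cI_{N-1-\delta,N+\delta,\infty}(v)\to\cI_{N-1,N,\infty}(v)$ as $\delta\downarrow 0$ is uniform in $v\in[0,1]$ gives, for any $\eta>0$, a $\bar\delta=\bar\delta(N,\eta)\in(0,\bar\delta_0]$ with $|\cI_{N-1,N,\infty}(v)-\cI_{N-1-\delta,N+\delta,\infty}(v)|\le\eta$ for all $v\in[0,1]$ and all $\delta\in[0,\bar\delta]$. The uniformity in $v$ is not automatic from pointwise continuity, so this is the step I expect to require the most care: I would obtain it by noting that the profiles $\cI_{N-1-\delta,N+\delta,\infty}$ are uniformly bounded (by $\cI$ of a fixed model, up to the $\delta$-dependent normalization, which stays in a compact range), are all equal to $0$ at $v=0,1$, and are concave in $v$ --- as used in Lemma \ref{lem:D<inf}, the model one-dimensional profiles with these densities are concave. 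A family of concave functions on $[0,1]$, uniformly bounded and vanishing at the endpoints, that converges pointwise converges uniformly; combined with pointwise convergence as $\delta\downarrow0$ (from the first part) this yields the uniform estimate. The main obstacle is thus not any single hard computation but assembling these soft ingredients --- total-variation stability of one-dimensional isoperimetric profiles, continuous dependence of the model densities on $\delta$, and the concavity/endpoint normalization that upgrades pointwise to uniform convergence --- in the right order.
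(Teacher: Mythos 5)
Your starting point is the same as the paper's: reduce to the explicit one-parameter family of model measures $\mu_\delta = c_{N,\delta}\,\sin\bigl(\sqrt{(N-1-\delta)/(N+\delta-1)}\;t\bigr)^{N+\delta-1}\chi_{[0,\sqrt{(N+\delta-1)/(N-1-\delta)}\,\pi]}\,\mathcal{L}^1$ and exploit their continuous dependence on $\delta$. But the mechanism you propose for the first part has a real gap. The stability statement you need is two-sided, and the only direction your argument actually delivers is the one coming from the Proposition 6.1-type transfer of a near-optimal competitor of the limit measure to the perturbed measures, i.e. $\cI_{(\R,|\cdot|,\mu_0)}(v)\geq\limsup_\delta\cI_{(\R,|\cdot|,\mu_\delta)}(v)$. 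The direction you call the ``matching liminf'', namely $\liminf_\delta\cI_{(\R,|\cdot|,\mu_\delta)}(v)\geq\cI_{(\R,|\cdot|,\mu_0)}(v)$, requires controlling \emph{arbitrary} Borel competitors of the perturbed measures; your parenthetical fix (``direct approximation of an optimal region for $\mu_0$'') produces competitors for $\mu_\delta$ and hence again only the upper bound, and ``by symmetry'' is not available as stated, because the roles of the fixed measure and of the approximating family are not symmetric in that argument -- making it symmetric would require a quantitative two-measure estimate, which is exactly the missing work. Note also that neither total-variation closeness nor uniform closeness of the densities suffices by itself to compare Minkowski contents of a wild Borel set under $\mu_\delta$ and $\mu_0$: the densities vanish at the endpoints of their (slightly different) supports, so the ratio of the two densities is not uniformly close to $1$ there, while the unweighted Minkowski content of a general competitor may be infinite. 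The paper sidesteps all of this by using one more piece of information from Milman: for the model measures the isoperimetric regions are half-lines $(-\infty,a_{\delta,N,v})$, so that $\cI_{N-1-\delta,N+\delta,\infty}(v)=c_{N,\delta}\,\sin\bigl(\sqrt{(N-1-\delta)/(N+\delta-1)}\;a_{\delta,N,v}\bigr)^{N+\delta-1}$ is an explicit composition of functions depending continuously (jointly in $(\delta,v)$) on the parameters; continuity, and the uniform-in-$v$ conclusion by compactness of $[0,1]\times[0,\bar\delta]$, then follow at once. If you wish to keep the softer route you must either import this structure of minimizers or prove a genuine two-sided quantitative stability estimate: that is the missing idea.

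A secondary, fixable point: your upgrade from pointwise to uniform convergence via concavity is stated in a form that is false. Concave nonnegative functions on $[0,1]$ vanishing at the endpoints and converging pointwise need not converge uniformly (consider $f_n(v)=\min(nv,1,n(1-v))$, whose pointwise limit is discontinuous at the endpoints). The statement becomes correct if one adds that the limit is continuous on the closed interval -- which holds here, since $\cI_{N-1,N,\infty}$ vanishes continuously at $0$ and $1$ -- and then concavity, nonnegativity and the vanishing at the opposite endpoint give the required equi-smallness near $v=0,1$. So you should state and use that refined version; with the explicit formula above, however, none of this machinery is needed.
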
 

\begin{proof}
By \cite{Mil} we know that, called   
$$\mu_{N-1-\delta,N+\delta,\infty} :=c_{N,\delta} \left[\sin\left(  \sqrt{\frac{N-1-\delta}{N+\delta-1}} \, t \right)\right]^{N+\delta-1} \chi_{\left[0, \sqrt{ \frac{N+\delta-1}{N-1-\delta}} \pi \right] } (t) \,{\mathcal L}^1(dt)  \in {\mathcal F}_{N-1-\delta,N+\delta,\infty},$$
where $c_{N,\delta}>0$ is the normalizing constant, there exists   $A=A_{\delta,N,v}$ of the form $(-\infty, a_{\delta, N,v})$ such that  for every $v \in [0,1]$
$$
\mu_{N-1-\delta,N+\delta,\infty}(A)= v \quad \text{and} \quad \mu^{+}_{N-1-\delta,N+\delta,\infty}(A)= \cI_{N-1-\delta,N+\delta,\infty}(v).
$$
It is straightforward to check that the maps
$$
 \delta \mapsto  \left[\sin\left(  \sqrt{\frac{N-1-\delta}{N+\delta-1}} (\cdot) \right)\right]^{N+\delta-1} \chi_{\left[0, \sqrt{ \frac{N+\delta-1}{N-1-\delta}} \pi \right] } (\cdot) \in C(\R,\|\cdot\|_\infty), \quad \delta \mapsto a_{\delta,N,v} \in \R^+, \quad  \delta \mapsto c_{N,\delta} \in \R^+
$$
are continuous. Since  by the  Fundamental Theorem of Calculus
$$\mu^{+}_{N-1-\delta,N+\delta,\infty}(A_{\delta,N,v}) = c_{N,\delta}   \left[\sin\left(  \sqrt{\frac{N-1-\delta}{N+\delta-1}} \, a_{\delta,N,v} \right)\right]^{N+\delta-1},$$
the claim follows.
\end{proof}

We can now prove the almost rigidity theorem.

\medskip
\noindent
{\textbf{Proof of Theorem \ref{thm:AlmRig}}}.
Let $\eta=\eta(v,N,\ve_0)>0$ be given by Lemma \ref{lem:D<inf} so that  for every $\delta\in \left[0, \frac{N-1}{2}\right]$ and every $D\in(0,\pi-\ve_0)$ it holds
\begin{equation}\label{eq:D<infeta}
\cI_{N-1-\delta, N+\delta,D}(v) \geq \cI_{N-1-\delta, N+\delta,\infty}(v)+3\eta.
\end{equation}
Moreover Lemma \ref{lem:Icont} ensures that for $\delta>0$ small enough it holds
\begin{equation} \label{eq:I>eta}
\cI_{N-1-\delta, N+\delta, \infty} (v) \geq  \cI_{N-1,N,\infty} (v) - \eta.
\end{equation}
Assume by contradiction there exists $\ve_0>0$ such that for every $\delta>0$ there is an $\RCD^*(N-1-\delta,N+\delta)$ space $(X,\sfd,\mm)$ containing a Borel subset $A\subset X$ satisfying 
\begin{equation}\label{eq:Aeta}
\mm(A)=v \quad \text{and} \quad \mm^+(A)\leq \cI_{N-1, N, \infty}(v)+\eta    
\end{equation}
and  such that $\diam((X,\sfd))\leq \pi-\ve_0$.

If we argue analogously to the  proof of Theorem \ref{thm:Rigidity} using   \eqref{eq:D<infeta}, \eqref{eq:I>eta} and \eqref{eq:Aeta}, we reach the contradiction
\begin{eqnarray}
\cI_{N-1-\delta,N+\delta,\infty}(v)+2\eta &\geq & \cI_{N-1,N,\infty} (v) + \eta  \geq  \mm^+(A) \geq  \int_{Q} \left( (h_{q}\mathcal{L}^1)^+(g(q,\cdot)^{-1}(A))  \right)\, \qq(dq)   \nonumber \\
							    &\geq &  \int_{Q}  \cI_{N-1-\delta,N+\delta, |\supp(h_q)|} (\bar{v}) \, \qq(dq)    \geq   \cI_{N-1-\delta,N+\delta,\infty}(\bar{v})+3\eta. \nonumber
							    \end{eqnarray}							    
\hfill$\Box$

Corollary  \ref{cor:AlmRig} is   a straightforward consequence  of  Theorem \ref{thm:AlmRig} combined with  the Maximal Diameter Theorem \cite{Ket} and  the compactness/stability of $\RCD^*(K,N)$ spaces with respect to the  mGH convergence.  Let us briefly outline the arguments for completeness.

\medskip
\noindent
{\textbf{Proof of Corollary \ref{cor:AlmRig}}}.
Fix $N\in [2, \infty) $, $v \in (0,1)$ and assume by contradiction there exist $\ve_0>0$ and  a sequence $(X_j, \sfd_j, \mm_j)$ of $\RCD^*(N-1-\frac{1}{j}, N+\frac{1}{j})$ spaces such that  $\cI_{(X_j,\sfd_j,\mm_j)}(v)\leq \cI_{N-1,N,\infty}(v)+\frac{1}{j}$ but
\begin{equation}\label{eq:contrj}
\sfd_{mGH}(X_j, [0,\pi] \times_{\sin}^{N-1} Y) \geq \ve_0 \quad \text{for every $j\in \N$}
\end{equation}
and every  $\RCD^*(N-2,N-1)$ space $(Y, \sfd_Y, \mm_Y)$ with $\mm_Y(Y)=1$.   Observe that Theorem  \ref{thm:AlmRig} yields 
\begin{equation}\label{eq:diam}
\diam((X_j, \sfd_j))\to \pi.
\end{equation}
 By the compactness/stability property of $\RCD^*(K,N)$ spaces recalled in Theorem \ref{thm:CompRCD}  we get  that, up to subsequences, the spaces $X_j$ mGH-converge to a limit $\RCD^*(N-1,N)$ space $(X_\infty, \sfd_\infty, \mm_\infty)$.   Since the diameter is continuous under mGH convergence of uniformly bounded spaces,    \eqref{eq:diam} implies  that $\diam((X_\infty, \sfd_\infty))=\pi$.  But then by the Maximal Diameter Theorem \cite{Ket} we get that $(X_\infty, \sfd_\infty, \mm_\infty)$ is isomorphic to a spherical suspension  $[0,\pi] \times_{\sin}^{N-1} Y$ for some  $\RCD^*(N-2,N-1)$ space $(Y, \sfd_Y, \mm_Y)$ with $\mm_Y(Y)=1$.  Clearly this contradicts \eqref{eq:contrj} and the thesis follows.
 \hfill$\Box$


\begin{thebibliography}{10}




 
%
%
%
%


%
%
%
%
%
%
%
%
%
%
%
%
%
%
%
%
%
%
%
%
%
%
%
%
%
%


%
%
%
%
%
%
%
%
%
%
%
%
%
%

\bibitem{AFP}   L.~Ambrosio, N.~Fusco and D.~Pallara,
\newblock Functions of Bounded Variation and Free Discontinuity Problems,
\newblock{\em Oxford Mathematical Monographs}, (2000).


\bibitem{AGMR12} L.~Ambrosio, N.~Gigli, A.~Mondino, and T.~Rajala,
\newblock Riemannian {R}icci curvature lower bounds in metric measure spaces with $\sigma$-finite measure,
\newblock{\em Trans. Amer. Math. Soc.,}   \textbf{367},  7,  (2015), 4661--4701.


\bibitem{AGS} L.~Ambrosio, N~Gigli, G.~Savar\'e,
\newblock Bakry-\'Emery curvature-dimension condition and Riemannian Ricci curvature bounds.
\newblock{\em  Ann. Probab.,}  \textbf{43},  1,  (2015), 339--404. 

\bibitem{AGS11a}
\leavevmode\vrule height 2pt depth -1.6pt width 23pt,
\newblock Calculus and heat flow in  metric measure spaces and applications to spaces with {R}icci bounds from
  below, 
\newblock{\em   Invent. Math.,}   \textbf{195},  2,  (2014), 289--391.

\bibitem{AGS11b} \leavevmode\vrule height 2pt depth -1.6pt width 23pt,
 \newblock Metric measure spaces with {R}iemannian {R}icci curvature bounded from below, 
 \newblock{\em Duke Math. J.},  \textbf{163}, (2014), 1405--1490. 



%

%


\bibitem{AMS} L.~Ambrosio, A.~Mondino and G.~Savar\'e,  
\newblock Nonlinear diffusion equations and curvature conditions in metric measure spaces, 
\newblock {\em Preprint  arXiv:1509.07273}.

\bibitem{AMSLocToGlob}
\leavevmode\vrule height 2pt depth -1.6pt width 23pt,
\newblock{ On the Bakry-\'Emery condition, the gradient estimates and the Local-to-Global property of $RCD^*(K,N)$ metric measure spaces},
\newblock{\em  J. Geom.  Anal.,}
 \textbf{26}, (2016), 24--56.

%
%
%

\bibitem{Bayle} V.~Bayle,
\newblock A Differential Inequality for the Isoperimetric Profile,
\newblock {\em Int. Math. Res. Not.}, (2004),  \textbf{7},  311--342.

\bibitem{BS10}  K.~Bacher and K.-T. Sturm,
\newblock Localization and tensorization properties of the curvature-dimension condition for metric measure spaces,
\newblock { \em J. Funct. Anal.,}  \textbf{259}, (2010),  28--56.


\bibitem{BakryEmery_diffusions}  D.~Bakry and M.~Emery,
\newblock Diffusions hypercontractives
\newblock{\em Seminaire de Probabilites XIX, Lecture Notes in Math.}, Springer-Verlag, New York. 1123 (1985), 177--206.


\bibitem{BakryLedoux}  D.~Bakry and M.~Ledoux,
\newblock{A logarithmic Sobolev form of the Li-Yau parabolic inequality},
\newblock{\em Rev. Mat. Iberoam.,} \textbf{22}, 2, (2006), 683--702. 

\bibitem{BurZal} Y.D.~Burago and V.A~Zalgaller,
\newblock{Geometric inequalities},
\newblock{\em  Grundlehren der Mathematischen Wissenschaften [Fundamental Principles of Mathematical Sciences]}, \textbf{285}. Springer, Berlin (1988). 

%
%
%
%

\bibitem{biacava:streconv}
S.~Bianchini and F.~Cavalletti,
\newblock The {M}onge problem for distance cost in geodesic spaces.
\newblock {\em Commun. Math. Phys.},  \textbf{318},  (2013), 615 -- 673.



\bibitem{cava:MongeRCD}  F.~Cavalletti,
\newblock  Monge problem in metric measure spaces with Riemannian curvature-dimension condition,
\newblock{\em Nonlinear Anal.}, \textbf{99}, (2014), 136--151.


\bibitem{cava:decomposition} \leavevmode\vrule height 2pt depth -1.6pt width 23pt,
\newblock Decomposition of geodesics in the {W}asserstein space and the  globalization property.
\newblock {\em Geom. Funct. Anal.}, \textbf{24}, (2014),  493 -- 551.



\bibitem{CM2} F.~Cavalletti and A.~Mondino.
\newblock Sharp geometric and functional inequalities in metric measure spaces with lower Ricci curvature bounds.
\newblock {\em preprint arXiv:1505.02061},  to appear in Geometry \& Topology. 



\bibitem{CM3} F.~Cavalletti and A.~Mondino.
\newblock Optimal maps in essentially non-branching spaces.
\newblock {\em preprint  arXiv:1609.00782}, to appear in Commun. Contemp. Math.



\bibitem{cavasturm:MCP} F.~Cavalletti and K.-T.~Sturm.
\newblock Local curvature-dimension condition implies measure-contraction  property.
\newblock {\em J. Funct. Anal.}, \textbf{262}, 5110 -- 5127, 2012.


\bibitem{CC96} G.~Cheeger and T.H.~Colding,
\newblock  Lower bounds on Ricci curvature and the almost rigidity of warped products,
\newblock {\em Annals of Math.}, \textbf{144}, 1,  (1996),  189--237.


\bibitem{CC1}
\leavevmode\vrule height 2pt depth -1.6pt width 23pt,
\newblock On the structure of spaces with {R}icci curvature bounded below. I.
\newblock {\em J. Diff. Geom.}, \textbf{45} (1997),  406 -- 480.


\bibitem{CC2}
\leavevmode\vrule height 2pt depth -1.6pt width 23pt,
\newblock On the structure of spaces with {R}icci curvature bounded below. II.
\newblock {\em J. Diff. Geom.},  \textbf{54},  (2000), 13--35.  


\bibitem{CC3}
\leavevmode\vrule height 2pt depth -1.6pt width 23pt,
\newblock On the structure of spaces with {R}icci curvature bounded below. III.
\newblock {\em J. Diff. Geom.}, \textbf{54}, (2000), 37 -- 74.

\bibitem{CL} M.~Cicalese, G.P.~Leonardi,
\newblock{  A selection principle for the sharp quantitative isoperimetric inequality},
\newblock{\em Arch. Rat. Mech. Anal.,} \textbf{206}, 2,  (2012),  617--643.  

 \bibitem{CN} T.~Colding, A.~Naber, 
 \newblock Sharp H\"older continuity of tangent cones for spaces with a lower Ricci curvature bound and applications
\newblock {\em Annals of Math.}, \textbf{176}, (2012).
 

\bibitem{Croke} C.B.~Croke,
\newblock An eigenvalue pinching theorem, 
\newblock {\em Invent. Math.}, \textbf{68}, 2,  (1982),  253--256.



\bibitem{EiMe}  M.~Eichmair and J.~Metzger,
\newblock  Unique isoperimetric foliations of asymptotically flat manifolds in all dimensions,
\newblock {\em Invent. Math.},  \textbf{194}, (2013), 591--630. 


\bibitem{EKS} M~Erbar, ~Kuwada and K.T.~Sturm,
\newblock On the Equivalence of the Entropic Curvature-Dimension Condition and Bochner's Inequality on Metric Measure Space,
\newblock {\em  Invent. Math.},  \textbf{201},  3,  (2015), 993--1071.  
%



\bibitem{Evans:PDEs} L.~C. Evans,
\newblock {\em Partial Differential Equations}, 
\newblock Graduate Studies in Mathematics, vol. 19, AMS, 1998.


\bibitem{FiMP} A.~Figalli, F.~Maggi and A.~Pratelli,
\newblock{A mass transportation approach to quantitative isoperimetric inequalities},
\newblock{\em Invent. Math.}, \textbf{182}, 1, (2010), 167--211.

\bibitem{Fre:measuretheory4} D.~H. Fremlin,
\newblock {\em Measure Theory}, volume~4.
\newblock Torres Fremlin, (2002).

\bibitem{FuMP} N.~Fusco, F.~Maggi and A.~Pratelli,
\newblock{The sharp quantitative isoperimetric inequality},
\newblock{\em Annals of Math.}, \textbf{168}, (2008), 941--980.


\bibitem{BBG} P.H.~B\'erard, G.~Besson, and S.~Gallot,
\newblock{Sur une in\'egalit\'e isop\'erim\'etrique qui g\'en\'eralise celle de Paul L\'evy-Gromov [An isoperimetric inequality generalizing the Paul Levy-Gromov inequality],}
\newblock{\em Invent. Math.}, \textbf{80}, 2, (1985), 295--308 (French).

\bibitem{GaMo}   N.~Garofalo and A.~Mondino, 
\newblock{Li-Yau and Harnack type inequalities in $\RCD^*(K,N)$ metric measure spaces}, 
\newblock{\em Nonlinear Analysis: Theory, Methods \& Applications}, \textbf{95}, (2014), 721--734.


\bibitem{GigliMap}  N.~Gigli,
\newblock Optimal maps in non branching spaces with Ricci curvature bounded from below,
\newblock{\em Geom. Funct. Anal.}, \textbf{22} (2012) no. 4, 990--999.

\bibitem{GigliSplitting}
\leavevmode\vrule height 2pt depth -1.6pt width 23pt,
\newblock {The splitting theorem in non-smooth context},
\newblock{\em preprint arXiv:1302.5555}, (2013).


\bibitem{GMS2013}  N.~Gigli, A.~Mondino and G.~Savar\'e,
\newblock {Convergence of pointed non-compact metric measure spaces and stability of Ricci curvature bounds and heat flows},
\newblock{\em Proc. London Math. Soc.,}   \textbf{111},  (5),  (2015), 1071--1129.
 

\bibitem{GMR2013}  N.~Gigli, A.~Mondino and T.~Rajala,
\newblock{Euclidean spaces as weak tangents of infinitesimally Hilbertian metric measure spaces with Ricci curvature bounded below}
\newblock{\em Journal fur die Reine und Ang. Math.}, \textbf{705},  (2015), 233--244.


\bibitem{GRS2013}  N.~Gigli, T.~Rajala and K.T.~Sturm,
\newblock{Optimal maps and exponentiation on finite dimensional spaces with Ricci curvature bounded from below}
\newblock{\em 	preprint  arXiv:1305.4849}, to appear in J. Geom. Analysis. 



\bibitem{Gro}  M.~Gromov,
\newblock Metric structures for Riemannian and non Riemannian spaces,
{\em  Modern Birkh\"auser Classics}, (2007).

\bibitem{GrMi} M.~Gromov and V.~Milman,
\newblock{Generalization of the spherical isoperimetric inequality to uniformly convex Banach spaces.}
\newblock{\em Compositio Math.}, \textbf{62},  3, (1987), 263--282.

\bibitem{Honda}  S.~Honda,
\newblock Cheeger constant, $p$-Laplacian, and Gromov-Hausdorff convergence,
\newblock {\em preprint} (2014) arXiv:1310.0304v3.


\bibitem{KaLoSi} R.~Kannan, L.~Lov\'asz and M.~Simonovits,
\newblock{Isoperimetric problems for convex bodies and a localization lemma},
\newblock{\em Discrete Comput. Geom.,} \textbf{13},  3-4, (1995), 541--559.

\bibitem{Ket} C.~Ketterer,
\newblock Cones over metric measure spaces and the maximal diameter theorem.
\newblock{\em J. Math. Pures Appl.} \textbf{103}, 5,  (2015),  1228--1275.


\bibitem{klartag} B.~Klartag,
\newblock Needle decomposition in Riemannian geometry, 
\newblock{\em preprint  arXiv:1408.6322}, to appear in Mem.  AMS.

\bibitem{lottvillani:metric} J.~Lott and C.~Villani,
\newblock Ricci curvature for metric-measure spaces via optimal transport,
\newblock{\em Ann. of Math.} (2) \textbf{169} (2009), 903--991.

\bibitem{LoSi} L.~Lov\'asz and M.~Simonovits,
\newblock{Random walks in a convex body and an improved volume algorithm,}
\newblock{\em Random Structures Algorithms,} \textbf{4},  4, (1993), 359--412.

\bibitem{Mag} F.~Maggi,
\newblock Sets of Finite Perimeter and Geometric Variational Problems  (an Introduction to Geometric Measure Theory),
\newblock{\em Cambridge Studies in Advanced Mathematics}, (2012).

\bibitem{Mil} E.~Milman,
\newblock Sharp Isoperimetric Inequalities and Model Spaces for Curvature-Dimension-Diameter Condition,
\newblock{\em J. Europ. Math. Soc.},  \textbf{17}, (5),  (2015), 1041--1078.

\bibitem{MilRot} E.~Milman and L.~Rotem,
\newblock Complemented Brunn-Minkowski Inequalities and Isoperimetry for Homogeneous and Non-Homogeneous Measures,
\newblock{\em Advances in Math.}, \textbf{262}, 867--908, (2014). 

\bibitem{MN} A.~Mondino and A.~Naber, 
\newblock Structure Theory of Metric-Measure Spaces with Lower Ricci Curvature Bounds I,
\newblock{\em preprint arXiv:1405.2222}.



\bibitem{Mor} F.~Morgan,
\newblock Geometric Measure Theory (A Beginner's Guide), 
\newblock{\em Elsevier/Academic Press,}
Amsterdam, Fourth edition, (2009).

 
\bibitem{MorPol} \leavevmode\vrule height 2pt depth -1.6pt width 23pt,
\newblock  In polytopes, small balls about  some vertex minimize perimeter. 
\newblock{\em J. Geom. Anal.} \textbf{17},  97--106, 
(2007).

\bibitem{MR} F.~Morgan and M.~Ritor\'e,
\newblock Isoperimetric regions in cones,
\newblock{\em Trans. Amer. Math. Soc.}, \textbf{354},  (2002), 2327--2339.


\bibitem{Ohta}  S.I..~Ohta,
\newblock  Finsler interpolation inequalities,
\newblock  {\em Calc. Var. Partial Differential Equations}, \textbf{36}, (2009), 211--249.

\bibitem{Oss} R.~Osserman, 
\newblock The isoperimetric inequality,
\newblock{\em  Bull. Am. Math. Soc.}, \textbf{84} (6), 1182--1238,
(1978)



\bibitem{PW} L.E.~Payne and H.F.~Weinberger,
\newblock{ An optimal Poincar\'e inequality for convex domains},
\newblock{\em  Arch. Rational Mech. Anal.}, \textbf{5}, (1960), 286--292.

\bibitem{Pet} A. Petrunin,
\newblock Harmonic functions on Alexandrov spaces and their applications,
\newblock{\em  Electronic Res. Announc. AMS}, \textbf{9}, (2003), 135---141. 

\bibitem{PLSV} \leavevmode\vrule height 2pt depth -1.6pt width 23pt,
\newblock Alexandrov meets Lott-Sturm-Villani,
\newblock{\em M\"unster J. Math.}, \textbf{4}, (2011), 53--64.

\bibitem{R2011} T.~Rajala,
\newblock {Local Poincar\'e inequalities from stable curvature conditions on metric spaces},
\newblock{\em Calc. Var. Partial Differential Equations,} \textbf{44},  (2012), 477--494.
 
\bibitem{RS2014} T.~Rajala and K.T.~Sturm,
\newblock {Non-branching geodesics and optimal maps in strong $\CD(K,\infty)$-spaces},
\newblock{\em Calc. Var. Partial Differential Equations,} \textbf{50},  (2014), 831--846.
 


\bibitem{Rit} M.~Ritor\'e, 
\newblock Geometric flows, isoperimetric inequalities and hyperbolic geometry, Mean curvature flow and isoperimetric inequalities,
\newblock {\em Adv. Courses Math. CRM Barcelona},  45-113, Birkh\"auser, Basel, (2010).


\bibitem{Ros} A.~Ros,
\newblock The isoperimetric problem,
\newblock{Lecture series at the Clay Mathematics Institute, Summer School on the Global Theory of Minimal Surfaces}, MSRI, Berkeley, California, (2001). 


\bibitem{Savare13} G.~Savar\'e, 
\newblock{Self-improvement of the Bakry-\'Emery condition and Wasserstein contraction of the heat flow in RCD$(K,\infty)$ metric measure spaces},
\newblock{\em Disc. Cont. Dyn. Sist. A},  \textbf{34}, (2014), 1641--1661.   


\bibitem{sturm:I} K.T.~Sturm, 
\newblock On the geometry of metric measure spaces. I,
\newblock{\em Acta Math.} \textbf{196} (2006), 65--131.

\bibitem{sturm:II} K.T.~Sturm, 
\newblock On the geometry of metric measure spaces. II,
\newblock{\em Acta Math.} \textbf{196} (2006), 133--177.

\bibitem{Vil} C.~Villani, 
\newblock Optimal transport. Old and new, 
\newblock{\em Grundlehren der Mathematischen Wissenschaften}, \textbf{338}, Springer-Verlag, Berlin, (2009).


\bibitem{zhangzhu} H.C.~Zhang and X.P.~Zhu,
\newblock Ricci curvature on Alexandrov spaces and rigidity theorems, 
\newblock{\em Comm. Anal. Geom.} \textbf{18}, (3), (2010), 503--553.



\end{thebibliography}
\end{document}